\documentclass[12pt]{amsart}
\usepackage{amssymb}
\usepackage{amsmath}
\usepackage{mathrsfs}
\usepackage{enumerate}
\usepackage{amsthm}
\usepackage{cancel}
\newtheorem{theorem}{Theorem}[section]
\newtheorem{lemma}[theorem]{Lemma}
\newtheorem{prop}[theorem]{Proposition}
\newtheorem{corollary}[theorem]{Corollary}
\theoremstyle{definition}
\newtheorem{definition}[theorem]{Definition}
\newtheorem{example}[theorem]{Example}
\newtheorem{remark}[theorem]{Remark}
\newtheorem{problem}{Problem}[]

\numberwithin{equation}{section}

\newcommand{\opnorm}[1]{\|{#1}\|_\mathrm{op}}
\newcommand{\fnorm}[1]{\|{#1}\|_\mathrm{F}}

\newcommand{\lnorm}[2]{\|{#1}\|_{#2}}
\newcommand{\inner}[2]{\langle{#1, #2}\rangle}

\begin{document}
\allowdisplaybreaks
\title{Testing the mixture model hypothesis via spectral gap}
\author{March T.~Boedihardjo, Joe Kileel, and Vandy Tombs}
\begin{abstract}
In this paper, we study the problem of testing whether or not a given probability measure $\mu$ on $\mathbb{R}^{d}$ can be decomposed as a mixture of two probability measures whose second order statistics are significantly different. 
We call this the problem of testing the mixture model hypothesis.
To tackle it, we introduce a new set of computable orthogonal invariants of $\mu$, namely, the eigenvalues of the 4th moment operator $T_{\mu}$ associated with the measure. 
We prove that the largest eigenvalue is always an outlier eigenvalue.  Further, we show how the first and second largest eigenvalues of $T_{\mu}$ give nonasymptotic bounds for this problem and give a complete resolution of the asymptotic version of the problem under the $L^{8}$-$L^{2}$ equivalence assumption.
\end{abstract}
\keywords{Mixture model, spectral gap, covariance, high dimensional distribution}
\subjclass[2020]{60B11, 62G10}
\maketitle
\section{Introduction}
\subsection{Formulating the mixture model hypothesis}
Let $d\in\mathbb{N}$. For $p\geq 1$, denote by $\mathcal{P}_{p}(\mathbb{R}^{d})$ the set of all probability measures on $\mathbb{R}^{d}$ such that $\int_{\mathbb{R}^{d}}\|x\|_{2}^{p}\,d\mu(x)<\infty$, where $\|\,\|_{2}$ is the Euclidean norm on $\mathbb{R}^{d}$. If $x\in\mathbb{R}^{d}$, then $\delta_{x}$ denotes the probability measure with an atom at $x$ of mass $1$.

Suppose that we are given a probability measure $\mu\in\mathcal{P}_{2}(\mathbb{R}^{d})$. Our goal is to test whether $\mu$ is a mixture of two very different distributions, or equivalently, whether the population represented by $\mu$ is, in fact, the union of two very different subpopulations. 
We call this the problem of testing the mixture model hypothesis.
There are several known approaches to this problem:
\begin{enumerate}[(1)]
\item (Multimodal/unimodal) When the dimension $d$ is small, a common approach is to check whether $\mu$ is a multimodal distribution or a unimodal distribution, since a distribution being multimodal suggests that it is derived from a mixture model. However, when the dimension $d$ gets large, oftentimes  the mode of $\mu$ either does not make sense or gives very little information.
For example, an empirical measure on a high dimensional space is often the uniform distribution on $n$ points that are well separated, where $n$ is the sample size; in this case, every one of those $n$ points is a mode of $\mu$. Therefore, checking the multimodality/unimodality of $\mu$ is typically not a good approach for large $d$.
\item (Parametric model) 
Suppose we fix a parametric class $\mathcal{C}$ of probability distributions, and  assume that either $\mu$ comes from $\mathcal{C}$ or is a mixture of two distributions in $\mathcal{C}$.
Then the mixture model hypothesis can be decided as a byproduct of parameter estimation.  
Several parameter estimation algorithms have been developed for particular classes of parametric mixture models, including mixtures of two Gaussian distributions \cite{kalai2010efficiently}.  Generally, the costs scale inverse polynomially in the separation between the two mixture constituents. Of course, this approach is also reliant on the assumption of the parameteric model $\mathcal{C}$. 
\item (Cheeger/Poincar\'e constant) The Cheeger constant $\psi_{\mu}$ and Poincar\'e constant $C_{p}(\mu)$ (see \cite{KL}) quantify the ``metric disconnectedness" of a given probability measure $\mu$, or more precisely, the extent to which the population represented by $\mu$ can be partitioned into two subpopulations that are mostly separated like an hourglass. However, in practice, one often uses an empirical measure of $\mu$ to approximate $\mu$.  Unfortunately, the quantities $\psi_{\mu}$ and $C_{P}(\mu)$ are always equal to $0$ or $\infty$ when $\mu$ is a discrete measure.
\end{enumerate}
Therefore, in the context of high dimensions and in a  nonparametric setting, it is not clear what a good approach to testing the mixture model hypothesis is.  Moreover, it is not immediate what even  a useful and precise formulation of the problem should be.


In this paper, we 
choose to formulate the problem as testing whether a given $\mu\in\mathcal{P}_{2}(\mathbb{R}^{d})$ can be decomposed as a mixture $\mu=\frac{1}{2}\mu_{1}+\frac{1}{2}\mu_{2}$ of two probability measures $\mu_{1},\mu_{2}$ on $\mathbb{R}^{d}$ whose second order statistics are significantly different.
To quantify the extent to which $\mu\in\mathcal{P}_{2}(\mathbb{R}^{d})$ has such a mixture decomposition, we use the following notion of second order separation parameter.
\begin{definition}\label{defsmu}
Let $\mu\in\mathcal{P}_{2}(\mathbb{R}^{d})$. The {\it second order statistics matrix} of $\mu$ is defined to be the $d\times d$ positive semidefinite matrix:
\[B_{\mu}:=\int_{\mathbb{R}^{d}}xx^{T}\,d\mu(x),\]
where $x^{T}$ is the transpose of $x$. The {\it second order separation parameter} of $\mu$ is defined by
\begin{equation}\label{defsmueq}
s(\mu):=\frac{1}{2}\cdot\sup_{\mu=\frac{1}{2}\mu_{1}+\frac{1}{2}\mu_{2}}\|B_{\mu_{1}}-B_{\mu_{2}}\|_{\mathrm{F}},
\end{equation}
where the supremum is over all probability measures $\mu_{1},\mu_{2}$ on $\mathbb{R}^{d}$ such that $\mu=\frac{1}{2}\mu_{1}+\frac{1}{2}\mu_{2}$, and $\|\,\|_{\mathrm{F}}$ denotes the Frobenius norm.
\end{definition}

\begin{remark} 
In (\ref{defsmueq}), since $B_{\mu_{1}}$ and $B_{\mu_{2}}$ are positive semidefinite, we have
\begin{equation} \label{eq:s_remark}
s(\mu) = \frac{1}{2}\cdot\sup_{\mu=\frac{1}{2}\mu_{1}+\frac{1}{2}\mu_{2}}\|B_{\mu_{1}}-B_{\mu_{2}}\|_{\mathrm{F}}\leq\frac{1}{2}\cdot\sup_{\mu=\frac{1}{2}\mu_{1}+\frac{1}{2}\mu_{2}}\|B_{\mu_{1}}+B_{\mu_{2}}\|_{\mathrm{F}}=\|B_{\mu}\|_{\mathrm{F}},
\end{equation}
where the last equality follows from the fact that if $\mu=\frac{1}{2}\mu_{1}+\frac{1}{2}\mu_{2}$, then $B_{\mu}=\frac{1}{2}B_{\mu_{1}}+\frac{1}{2}B_{\mu_{2}}$.
\end{remark}

\begin{remark}
The supremum in (\ref{defsmueq}) is taken over all possible equal weight mixture decompositions $\mu=\frac{1}{2}\mu_{1}+\frac{1}{2}\mu_{2}$. For unequal weight mixtures, see Section \ref{unequalsection}. That section also shows how to tweak $\mu$ so that we can also take into account the first order statistics of $\mu_{1}$ and $\mu_{2}$ inside the supremum in (\ref{defsmueq})
\end{remark}

In this paper, we formalize testing the mixture model hypothesis as follows. 
\begin{problem}\label{mainproblem}
Given a probability measure $\mu\in\mathcal{P}_{2}(\mathbb{R}^{d})$ and a threshold $0<\epsilon<1$, determine which category of the following dichotomy $\mu$ belongs to:
\begin{center}
Mixture model hypothesis: $s(\mu)>\epsilon\|B_{\mu}\|_{\mathrm{F}}$\\
Single distribution hypothesis: $s(\mu)\leq\epsilon\|B_{\mu}\|_{\mathrm{F}}$.
\end{center}
\end{problem}
\begin{remark}
The mixture model hypothesis in Problem \ref{mainproblem} is equivalent to saying that there exists a mixture decomposition $\mu=\frac{1}{2}\mu_{1}+\frac{1}{2}\mu_{2}$ such that $\|B_{\mu_{1}}-B_{\mu_{2}}\|_{\mathrm{F}}>\epsilon\|B_{\mu}\|_{\mathrm{F}}$, i.e., $B_{\mu_{1}}$ and $B_{\mu_{2}}$ differ by as much as $B_{\mu}$ in the Frobenius norm. The single distribution hypothesis is equivalent to saying that no matter how we decompose $\mu=\frac{1}{2}\mu_{1}+\frac{1}{2}\mu_{2}$ as a mixture, we must have $\|B_{\mu_{1}}-B_{\mu_{2}}\|_{\mathrm{F}}\leq\epsilon\|B_{\mu}\|_{\mathrm{F}}$, i.e., $B_{\mu_{1}}$ and $B_{\mu_{2}}$ must differ little relative to $B_{\mu}$ in the Frobenius norm.
\end{remark}

We also study the following problem, which is an asymptotic version of Problem \ref{mainproblem}. For a given sequence $(\mu^{(n)})_{n\in\mathbb{N}}$ of probability measures, we ask whether it is true that for every threshold $0<\epsilon<1$, the sequence eventually satisfies the single distribution hypothesis $s(\mu^{(n)})\leq\epsilon\|B_{\mu^{(n)}}\|_{\mathrm{F}}$. This condition is equivalent to $\displaystyle\lim_{n\to\infty}\frac{s(\mu^{(n)})}{\|B_{\mu^{(n)}}\|_{\mathrm{F}}}=0$.

\begin{problem}\label{mainproblemasymptotic}
Given a sequence $(\mu^{(n)})_{n\in\mathbb{N}}$ where each $\mu^{(n)}$ is a probability measure on $\mathbb{R}^{d_{n}}$ with $d_{n}\in\mathbb{N}$ and $\mu^{(n)}\neq\delta_{0}$, determine whether or not $\displaystyle\lim_{n\to\infty}\frac{s(\mu^{(n)})}{\|B_{\mu^{(n)}}\|_{\mathrm{F}}}=0$.
\end{problem}

\begin{remark}
In most cases, in order for $\displaystyle\lim_{n\to\infty}\frac{s(\mu^{(n)})}{\|B_{\mu^{(n)}}\|_{\mathrm{F}}}=0$ to occur, the dimension $d_{n}$ needs to grow to $\infty$ as $n\to\infty$. 
This is because if $d_{n}=d$ is fixed and all the measures $\mu^{(n)}$ are supported on a compact set (say the unit ball of $\mathbb{R}^{d}$),  there is a subsequence $(\mu^{(n_{k})})_{k\in\mathbb{N}}$ that converges to a probability measure $\mu$ on $\mathbb{R}^{d}$. 
The condition in Problem \ref{mainproblemasymptotic} would then require $xx^{T}$ to be a constant matrix $\mu$-almost surely.
So in particular, Problem \ref{mainproblemasymptotic} is generally not suitable for testing a sequence of empirical measures of a fixed measure $\mu$ on $\mathbb{R}^{d}$.
\end{remark}

We now go through some examples that illustrate the two hypotheses in Problems \ref{mainproblem} and \ref{mainproblemasymptotic}. The bounds for $s(\mu)$ stated in these examples are proved later in this paper. If $\Sigma$ is a $d\times d$ positive semidefinite matrix, then $\mathcal{N}(0,\Sigma)$ denotes the normal distribution on $\mathbb{R}^{d}$ with mean $0$ and covariance $\Sigma$.

\begin{example}[Product measure]\label{introproduct}
Suppose that $X_{1},\ldots,X_{d}$ are independent, identically distributed random variables with $\mathbb{E}X_{1}=0$, $\mathbb{E}X_{1}^{2}=1$ and $\mathbb{E}X_{1}^{4}<\infty$. Let $\mu^{(d)}$ be the distribution of the random vector $X=(X_{1},\ldots,X_{d})^{T}$ in $\mathbb{R}^{d}$. Then $B_{\mu^{(d)}}=I$, so $\|B_{\mu^{(d)}}\|_{\mathrm{F}}=\sqrt{d}$. On the other hand, as we prove later in Section \ref{smuexamplesproofs}, we have $s(\mu^{(d)})\leq(\mathbb{E}X_{1}^{4}+1)^{1/2}$, regardless of the dimension $d$. Therefore, concerning Problem \ref{mainproblemasymptotic}, we have that the sequence $(\mu^{(d)})_{d\in\mathbb{N}}$ satisfies
\[\lim_{d\to\infty}\frac{s(\mu^{(d)})}{\|B_{\mu^{(d)}}\|_{\mathrm{F}}}=0.\]
Equivalently, for any fixed threshold $0<\epsilon<1$, as the dimension $d$ gets large, we have $s(\mu)\leq\epsilon\|B_{\mu}\|_{\mathrm{F}}$, i.e., $\mu$ satisfies the single distribution hypothesis in Problem \ref{mainproblem}.

As a special case, when $X_{1}$ is a Rademacher random variable, $\mu^{(d)}$ is the uniform distribution on the discrete hypercube $\{-1,1\}^{d}$.  Also, when $X_{1}$ is a standard normal random variable, $\mu^{(d)}$ is the standard normal distribution on $\mathbb{R}^{d}$. The example shows that for large dimensions $d$, the uniform distribution on $\{-1,1\}^{d}$ and the standard normal distribution $\mathcal{N}(0,I)$ both satisfy the single distribution hypothesis in Problem \nolinebreak \ref{mainproblem}. 
\end{example}

\begin{example}[Single normal]\label{introsinglenormal}
Suppose that $\mu=\mathcal{N}(0,\Sigma)$, where $\Sigma$ is a $d\times d$ positive semidefinite matrix. Then $B_{\mu}=\Sigma$, so $\|B_{\mu}\|_{\mathrm{F}}=\|\Sigma\|_{\mathrm{F}}$. As we prove later in Section \ref{smuexamplesproofs},
\[0.8\cdot\|\Sigma\|_{\mathrm{op}}\leq s(\mu)\leq \sqrt{2}\cdot\|\Sigma\|_{\mathrm{op}},\]
where $\|\,\|_{\mathrm{op}}$ denotes the spectral norm. Therefore, for a fixed threshold $0<\epsilon<1$, if the stable rank $(\frac{\|\Sigma\|_{\mathrm{F}}}{\|\Sigma\|_{\mathrm{op}}})^{2}$ is at least $\frac{2}{\epsilon^{2}}$, then we have $\sqrt{2}\cdot\|\Sigma\|_{\mathrm{op}}\leq\epsilon\|\Sigma\|_{\mathrm{F}}$, and so $s(\mu)\leq\epsilon\|B_{\mu}\|_{\mathrm{F}}$. This implies that $\mu$ satisfies the single distribution hypothesis in Problem \ref{mainproblem}.
\end{example}

\begin{example}[Mixture of two orthogonal subspaces]\label{intromixture}
Suppose that $\mu=\frac{1}{2}\mu_{1}+\frac{1}{2}\mu_{2}$ is the equal weight mixture of a probability measure $\mu_{1}$ supported on a subspace $V$ of $\mathbb{R}^{d}$ and a probability measure $\mu_{2}$ supported on the orthogonal complement $V^{\perp}$. Then $B_{\mu}=\frac{1}{2}B_{\mu_{1}}+\frac{1}{2}B_{\mu_{2}}$, so $\|B_{\mu}\|_{\mathrm{F}}=\frac{1}{2}\left\|B_{\mu_{1}}+B_{\mu_{2}}\right\|_{\mathrm{F}}$. By the definition of $s(\mu)$, we have $s(\mu)\geq\frac{1}{2}\|B_{\mu_{1}}-B_{\mu_{2}}\|_{\mathrm{F}}$. Since $\mu_{1}$ and $\mu_{2}$ are supported on orthogonal subspaces, the matrices $B_{\mu_{1}}$ and $B_{\mu_{2}}$ when treated as vectors are orthogonal, i.e., the trace of $B_{\mu_{1}}B_{\mu_{2}}$ is $0$.  So,
\[\|B_{\mu_{1}}-B_{\mu_{2}}\|_{\mathrm{F}}=(\|B_{\mu_{1}}\|_{\mathrm{F}}^{2}+\|B_{\mu_{2}}\|_{\mathrm{F}}^{2})^{1/2}=\|B_{\mu_{1}}+B_{\mu_{2}}\|_{\mathrm{F}}.\]
Therefore, $s(\mu)\geq\|B_{\mu}\|_{\mathrm{F}}$. But by (\ref{eq:s_remark}), we also have $s(\mu)\leq\|B_{\mu}\|_{\mathrm{F}}$. Hence, $s(\mu)=\|B_{\mu}\|_{\mathrm{F}}$, and so $\mu$ satisfies the mixture model hypothesis in Problem \ref{mainproblem} as long as $\mu\neq\delta_{0}$.
\end{example}

\begin{example}[Gaussian mixture]\label{introgeneralmixture}
Suppose that $\Sigma_{1}$ and $\Sigma_{2}$ are $d\times d$ positive semidefinite matrices. Let
\[\mu=\frac{1}{2}\mathcal{N}(0,\Sigma_{1})+\frac{1}{2}\mathcal{N}(0,\Sigma_{2})\]
be the equal weight mixture of $\mathcal{N}(0,\Sigma_{1})$ and $\mathcal{N}(0,\Sigma_{2})$. Depending on whether the two covariances $\Sigma_{1}$ and $\Sigma_{2}$ differ significantly, the measure $\mu$ may or may not satisfy the mixture model hypothesis in Problem \ref{mainproblem}. We give here tight upper and lower bounds for $B_{\mu}$ and $s(\mu)$. Since $B_{\mu}=\frac{1}{2}\Sigma_{1}+\frac{1}{2}\Sigma_{2}$, we have
\[\frac{1}{2}\max(\|\Sigma_{1}\|_{\mathrm{F}},\|\Sigma_{2}\|_{\mathrm{F}})\leq\|B_{\mu}\|_{\mathrm{F}}\leq\frac{1}{2}\|\Sigma_{1}\|_{\mathrm{F}}+\frac{1}{2}\|\Sigma_{2}\|_{\mathrm{F}}.\]
As we prove later in Section \ref{smuexamplesproofs},
\[\max\left(0.4\|\Sigma_{1}\|_{\mathrm{op}},\,0.4\|\Sigma_{2}\|_{\mathrm{op}},\,\frac{1}{2}\|\Sigma_{1}-\Sigma_{2}\|_{\mathrm{F}}\right)\leq s(\mu)\leq\|\Sigma_{1}\|_{\mathrm{op}}+\|\Sigma_{2}\|_{\mathrm{op}}+\frac{1}{2}\|\Sigma_{1}-\Sigma_{2}\|_{\mathrm{F}},\]
where $\|\,\|_{\mathrm{op}}$ denotes the spectral norm. Thus if $\|\Sigma_{1}-\Sigma_{2}\|_{\mathrm{F}}>\epsilon(\|\Sigma_{1}\|_{\mathrm{F}}+\|\Sigma_{2}\|_{\mathrm{F}})$ (i.e., $\Sigma_{1}$ and $\Sigma_{2}$ differ enough), then $\mu$ satisfies the mixture model hypothesis in Problem \ref{mainproblem}, where $0<\epsilon<1$ is the threshold.
We remark that although the two components $\mathcal{N}(0,\Sigma_{1})$ and $\mathcal{N}(0,\Sigma_{2})$ have the same center $0$, the quantity $s(\mu)$ can still detect that $\mu$ satisfies mixture model hypothesis as long as $\Sigma_{1}$ and $\Sigma_{2}$ differ enough. This is unlike certain clustering objectives (e.g., $k$-means) which cannot separate two clusters with the same center.
\end{example}

\subsection{Computability issue}
The main issue with testing the mixture model hypothesis in Problem \ref{mainproblem} is that it is not clear how to compute $s(\mu)$ in polynomial time.  
This is true even if $\mu$ is supported on, say, $n$ points.  Note that while the objective in (\ref{defsmueq}) is convex in $\mu_{1}$ and $\mu_{2}$, we are taking a supremum rather than an infimum in (\ref{defsmueq}).

In fact, it is no coincidence that most classical quantities that quantify the structure of a given measure $\mu$ (e.g., the Cheeger constant \cite{KL} and the subgaussian constant \cite[Definition 3.4.1]{Romanbook}) share the same issue where it is not clear how to compute the quantities in polynomial time. 
Intuitively, the issue stems from the fact that these quantities are all orthogonally invariant in $\mu$ (i.e., invariant under pushforward of $\mu$ by any orthogonal transformation on $\mathbb{R}^{d}$). With an orthogonally invariant quantity in $\mu$, one needs to take into account the given measure $\mu$ viewed from every direction; and often exponentially many directions are needed in order to obtain an accurate approximation of the quantity. As a result, direct computation of such quantities often requires an exponential amount of time as the dimension $d$ gets large. 
For example, the subgaussian constant of a given probability measure $\mu$ on $\mathbb{R}^{d}$ is given by $\sup_{y\in\mathbb{S}^{d-1}}\|\langle X,y\rangle\|_{\psi_{2}}$, where $\mathbb{S}^{d-1}$ is the unit sphere in $\mathbb{R}^{d}$, $X$ is a random vector in $\mathbb{R}^{d}$ distributed according to $\mu$, and $\|\,\|_{\psi_{2}}$ denotes the subgaussian norm. If one attempts to estimate this supremum directly, one needs exponentially many points in $\mathbb{S}^{d-1}$ in order to accurately cover  $\mathbb{S}^{d-1}$.

Therefore, in most cases, an orthogonally invariant quantity in $\mu$ has the issue where it not clear how to compute it in polynomial time. However, in the next subsection, we introduce a new set of quantities in $\mu$ that are both computable and orthogonally invariant. Later, we show how these invariants can be used to give nonasymptotic bounds for Problem \ref{mainproblem}, and furnish a complete resolution of Problem \ref{mainproblemasymptotic} under the $L^{8}$-$L^{2}$ equivalence assumption.

\subsection{The 4th moment operator and its eigenvalues}
Let us start by observing that the first, second and third order statistics of a given probability measure $\mu$ on $\mathbb{R}^{d}$ do not give enough information to determine whether $\mu$ satisfies the mixture model hypothesis. Indeed, consider the single normal distribution $\mu=\mathcal{N}(0,\frac{1}{2}I)$ on $\mathbb{R}^d$ and the mixture distribution $\nu=\frac{1}{2}\mathcal{N}(0,P)+\frac{1}{2}\mathcal{N}(0,I-P)$, where $P\in\mathbb{R}^{d\times d}$ is any orthogonal projection. The measures $\mu$ and $\nu$ have exactly the same first, second and third order statistics: the mean vector is $0$, the covariance matrix is $\frac{1}{2}I$, and the third moment tensor $\int_{\mathbb{R}^{d}}x\otimes x\otimes x\,d\mu(x)=\int_{\mathbb{R}^{d}}x\otimes x\otimes x\,d\nu(x)=0$ in the tensor space $\mathbb{R}^{d}\otimes\mathbb{R}^{d}\otimes\mathbb{R}^{d}$, since $\mu$ and $\nu$ are both symmetric about the origin. On the other hand, by Example \ref{introsinglenormal} and Example \ref{intromixture} when $d$ is large enough, $\mu$ satisfies the single distribution hypothesis, whereas $\nu$ satisfies the mixture model hypothesis. Hence, these two measures $\mu$ and $\nu$ illustrate that the first, second and third order statistics are insufficient to decide whether a given measure satisfies the mixture model hypothesis.

It turns out that the 2nd and 4th order statistics together do suffice to test the mixture model hypothesis, at least asymptotically, as we prove later in this paper. For a given probability measure $\mu\in\mathcal{P}_{4}(\mathbb{R}^{d})$, its 4th order statistics are encoded in its 4th moment tensor $\int_{\mathbb{R}^{d}}x\otimes x\otimes x\otimes x\,d\mu(x)$ in the tensor space $\mathbb{R}^{d}\otimes\mathbb{R}^{d}\otimes\mathbb{R}^{d}\otimes\mathbb{R}^{d}$. It should be stressed that while the structure of a matrix can be understood via its spectral decomposition, most tensor problems are NP hard as the dimension $d$ gets large \cite{NPhard}.  Hence one might suspect that it may not be a good approach to test the mixture model hypothesis by directly analyzing the tensor structure of the 4th moment tensor.

 We introduce a new set of quantities in $\mu$ that are both computable and orthogonally invariant,  namely the eigenvalues of the operator $T_{\mu}$, defined below using 4th order statistics. (See also Remark \ref{computableinvariantproof} below.) The  operator  $T_{\mu}$ is, in fact, the  flattening of the 4th moment tensor of $\mu$ into an operator. This flattening of the tensor into an operator avoids the issue of NP hardness of tensor problems.
\begin{definition} \label{def:T_mu}
Let $\mathbb{R}_{\mathrm{sym}}^{d\times d}$ be the inner product space of all $d\times d$ real symmetric matrices with the inner product defined by
\[\langle A,B\rangle:=\mathrm{Tr}(AB),\quad A,B\in\mathbb{R}_{\mathrm{sym}}^{d\times d},\]
where $\mathrm{Tr}(\,)$ denotes the trace of the underlying matrix. For a probability measure $\mu\in\mathcal{P}_{4}(\mathbb{R}^{d})$, its {\it 4th moment operator} $T_{\mu}:\mathbb{R}_{\mathrm{sym}}^{d\times d}\to\mathbb{R}_{\mathrm{sym}}^{d\times d}$ is defined by
\[T_{\mu}(A):=\int_{\mathbb{R}^d}\langle A,xx^{T}\rangle xx^{T}\,d\mu(x),\]
for all $A\in\mathbb{R}_{\mathrm{sym}}^{d\times d}$, where $x^{T}$ is the transpose of $x$.
\end{definition}
\begin{remark}
One can write $T_{\mu}$ as the covariance operator $T_{\mu}(A)=\int_{\mathbb{R}_{\mathrm{sym}}^{d\times d}}\langle A,F\rangle F\,d\nu(F)$ associated with the measure $\nu$ on $\mathbb{R}_{\mathrm{sym}}^{d\times d}$, where $\nu$ is the pushforward measure of $\mu$ by the map $x\mapsto xx^{T}$. However, this covariance operator is not a typical covariance operator; unlike most distributions studied in the high dimensional probability literature, the pushforward measure $\nu$ is supported on the manifold $\{xx^{T}|\,x\in\mathbb{R}^{d}\}$. This is quite a special manifold. Indeed, the manifold structure makes it so the largest eigenvalue of $T_{\mu}$ is always an outlier eigenvalue as we see in Theorem \ref{firstmain} below.
\end{remark}
\begin{remark}\label{tmupsd}
Since $\langle T_{\mu}(A),B\rangle=\langle A,T_{\mu}(B)\rangle$ and $\langle T_{\mu}(A),A\rangle\geq 0$ for all $A,B\in\mathbb{R}_{\mathrm{sym}}^{d\times d}$, the operator $T_{\mu}$ is positive semidefinite.
\end{remark}
\begin{remark}\label{computableinvariantproof}
The eigenvalues of the operator $T_{\mu}$ are computable in polynomial time as long as one has knowledge of all the 4th order statistics of $\mu$. In practice, one may have to take $n$ samples of $\mu$ to form an empirical measure $\mu^{(n)}$ and use $T_{\mu^{(n)}}$ to approximate $T_{\mu}$.

The eigenvalues of $T_{\mu}$ are also invariant under pushforward of $\mu$ by any orthogonal transformation $M$ on $\mathbb{R}^{d}$. Indeed, let $M_{\#}\mu$ be the pushforward of $\mu$ by $M$. Then for every $A\in\mathbb{R}_{\mathrm{sym}}^{d\times d}$, we have
\begin{eqnarray*}
T_{M_{\#}\mu}(A)&=&\int_{\mathbb{R}^{d}}\langle A,xx^{T}\rangle xx^{T}\,d(M_{\#}\mu)(x)\\&=&
\int_{\mathbb{R}^{d}}\langle A,(Mx)(Mx)^{T}\rangle\,(Mx)(Mx)^{T}\,d\mu(x)\\&=&
\int_{\mathbb{R}^{d}}\langle M^{T}AM,xx^{T}\rangle Mxx^{T}M^{T}\,d\mu(x)\\&=&
L\circ T_{\mu}\circ L^{-1}(A),
\end{eqnarray*}
where $L:\mathbb{R}_{\mathrm{sym}}^{d\times d}\to\mathbb{R}_{\mathrm{sym}}^{d\times d}$ is defined by $L(A)=MAM^{T}$ for $A\in\mathbb{R}_{\mathrm{sym}}^{d\times d}$, and $\circ$ denotes the composition of the operators. Hence, the operators $T_{M_{\#}\mu}$ and $T_{\mu}$ are similar and thus have the same eigenvalues.
\end{remark}

We now go through some examples of $\mu$ for which we can explicitly list out the eigenvalues of $T_{\mu}$. The actual computations and proofs are done in Section \ref{examplesection}.  In these examples, we always have in mind that the dimension $d$ is large. In all the examples, notice that the largest eigenvalue of $T_{\mu}$ is significantly larger than most of the other eigenvalues of $T_{\mu}$. Hence, a \textit{spectral gap} appears in the eigenvalue distribution of $T_{\mu}$.

Since $\mathrm{dim}\,\mathbb{R}_{\mathrm{sym}}^{d\times d}=\frac{d(d+1)}{2}$, the operator $T_{\mu}$ always has $\frac{d(d+1)}{2}$ eigenvalues, which we denote by $\lambda_{1}(T_{\mu})\geq\lambda_{2}(T_{\mu})\geq\ldots\geq\lambda_{\frac{d(d+1)}{2}}(T_{\mu})$.
\begin{example}\label{examplelambdastandardnormal}
Suppose that $\mu=\mathcal{N}(0,I)$ is the standard normal distribution on $\mathbb{R}^{d}$. Then
\[\lambda_{i}(T_{\mu})=\begin{cases}d+2,&i=1\\2,&i\geq 2\end{cases},\]
for all $1\leq i\leq\frac{d(d+1)}{2}$. In this example, the largest eigenvalue is of order $d$, whereas the rest of the eigenvalues are of order $1$.
\end{example}
\begin{example}\label{examplelambdaiid}
Suppose that $X_{1},\ldots,X_{d}$ are independent identically distributed random variables with $\mathbb{E}X_{1}=0$, $\mathbb{E}X_{1}^{2}=1$ and $\mathbb{E}X_{1}^{4}<\infty$. Let $\mu$ be the distribution of the random vector $X=(X_{1},\ldots,X_{d})^{T}$ in $\mathbb{R}^{d}$. Then the $\frac{d(d+1)}{2}$ eigenvalues of $T_{\mu}$ are listed as follows:
\[\underbrace{d+\mathbb{E}X_{1}^{4}-1}_{1},\;\underbrace{\mathbb{E}X_{1}^{4}-1,\ldots,\mathbb{E}X_{1}^{4}-1}_{d-1},\;\underbrace{2,\ldots,2}_{\frac{d(d-1)}{2}}.\]This example is a generalization of Example \ref{examplelambdastandardnormal}. Like in the above example, the largest eigenvalue is of order $d$, whereas the rest of the eigenvalues are of order $1$. So a spectral gap occurs between the first and second largest eigenvalues of $T_{\mu}$.
\end{example}
\begin{example}\label{examplelambdamixture}
Suppose that $\mathbb{R}^{d}=V_{1}\oplus\ldots\oplus V_{r}$ is an orthogonal decomposition, i.e., $V_{1},\ldots,V_{r}$ are subspaces of $\mathbb{R}^{d}$ such that $\mathbb{R}^{d}=V_{1}+\ldots+V_{r}$ and $V_{i}\perp V_{j}$ for all $i\neq j$ in $\{1,\ldots,r\}$. Assume further that all the $V_{i}$ have the same dimension $\frac{d}{r}\in\mathbb{N}$. For each $i\in\{1,\ldots,r\}$, let $P_{i}$ be the orthogonal projection from $\mathbb{R}^{d}$ onto $V_{i}$. Let
\[\mu=\frac{1}{r}\mathcal{N}(0,P_{1})+\ldots+\frac{1}{r}\mathcal{N}(0,P_{r})\]
be the equal weight mixture of the distributions $\mathcal{N}(0,P_{1}),\ldots,\mathcal{N}(0,P_{r})$. Then
\[\lambda_{i}(T_{\mu})=\begin{cases}\frac{d+2r}{r^{2}},&1\leq i\leq r\\\frac{2}{r},&r+1\leq r\leq\frac{d(d+r)}{2r}\\0,&i>\frac{d(d+r)}{2r}\end{cases},\]
for all $1\leq i\leq\frac{d(d+1)}{2}$. In this example, the largest $r$ eigenvalues are of order $d$, whereas the rest of the eigenvalues are of order $1$ or $0$. Thus, a spectral gap occurs between the $r$th largest and the $(r+1)$th largest eigenvalues.
\end{example}
Observe that in Example \ref{examplelambdastandardnormal} where $\mu$ is a single normal distribution, there is a spectral gap between the first and second largest eigenvalues $\lambda_{1}(T_{\mu})$ and $\lambda_{2}(T_{\mu})$. Meanwhile, in Example \ref{examplelambdamixture} where $\mu$ is a mixture distribution, we have $\lambda_{1}(T_{\mu})=\lambda_{2}(T_{\mu})$, assuming $r\geq 2$. This phenomenon resembles the classical fact in spectral graph theory that a regular graph is disconnected if and only if the first and second largest eigenvalues of
its adjacency matrix coincide. As we see in Theorem \ref{thm:spectral_decomposition} and Corollary \ref{asymptoticresolution} below, such a phenomenon holds in general, where we show how the two eigenvalues $\lambda_{1}(T_{\mu})$ and $\lambda_{2}(T_{\mu})$ can be used to test the mixture model hypothesis for $\mu$.  
Our paper thus makes a new link between high dimensional probability and spectral graph theory.

\subsection{Main results}
We now state the first main result of this paper. It says that the largest eigenvalue $\lambda_{1}(T_{\mu})$ is at least $\frac{d+1}{2}$ times the average of the $\frac{d(d+1)}{2}$ eigenvalues of $T_{\mu}$. Moreover, $\lambda_{1}(T_{\mu})$ is as large as the $\ell^{2}$-sum of all the eigenvalues of $T_{\mu}$, if $\mu$ satisfies the $L^{4}$-$L^{2}$ equivalence condition, which is a commonly used assumption in high dimensional probability (see Appendix \ref{lpl2section}).
\pagebreak

\begin{theorem}\label{firstmain}
Let $\mu\in\mathcal{P}_{4}(\mathbb{R}^{d})$. Then
\begin{equation}\label{firstmaineq1}
\lambda_{1}(T_{\mu})\geq\frac{1}{d}\sum_{i=1}^{d(d+1)/2}\lambda_{i}(T_{\mu}).
\end{equation}
Moreover, if $\beta\geq 1$ satisfies
\begin{equation}\label{l4l2}
\left(\int_{\mathbb{R}^d}|\langle x,v\rangle|^{4}\,d\mu(x)\right)^{1/4}\leq\beta\left(\int_{\mathbb{R}^d}|\langle x,v\rangle|^{2}\,d\mu(x)\right)^{{1/2}},
\end{equation}
for all $v\in\mathbb{R}^{d}$, then
\begin{equation}\label{firstmaineq2}
\|B_{\mu}\|_{\mathrm{F}}^{2}\leq\lambda_{1}(T_{\mu})\leq\left(\sum_{i=1}^{d(d+1)/2}\lambda_{i}(T_{\mu})^{2}\right)^{1/2}\leq\beta^{4}\|B_{\mu}\|_{\mathrm{F}}^{2}.
\end{equation}
\end{theorem}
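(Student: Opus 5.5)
Both parts follow from testing $T_{\mu}$ against the identity matrix, together with trace and Hilbert--Schmidt computations.

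\emph{Inequality (\ref{firstmaineq1}).} First compute the trace of $T_{\mu}$. Take an orthonormal basis $\{E_k\}$ of $\mathbb{R}^{d\times d}_{\mathrm{sym}}$. Then
\[\sum_i\lambda_i(T_\mu)=\sum_k\langle T_\mu E_k,E_k\rangle=\int\sum_k\langle E_k,xx^T\rangle^2\,d\mu(x)=\int\|xx^T\|_{\mathrm{F}}^2\,d\mu=\int\|x\|_2^4\,d\mu,\]
since $xx^T$ is itself symmetric. Next apply the variational characterization of $\lambda_1$ with the test matrix $A=I/\sqrt d$, which has unit Frobenius norm:
\[\lambda_1(T_\mu)\ge\langle T_\mu(I),I\rangle/d=\frac1d\int\langle I,xx^T\rangle^2\,d\mu=\frac1d\int\|x\|_2^4\,d\mu.\]
This is exactly (\ref{firstmaineq1}).

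\emph{Chain (\ref{firstmaineq2}).} I take the three inequalities in turn.

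\textbf{Left inequality.} Use the test matrix $A=B_\mu/\|B_\mu\|_{\mathrm{F}}$. Then
\[\langle T_\mu B_\mu,B_\mu\rangle=\int\langle B_\mu,xx^T\rangle^2\,d\mu\ge\Big(\int\langle B_\mu,xx^T\rangle\,d\mu\Big)^2=\|B_\mu\|_{\mathrm{F}}^4\]
by Jensen's inequality. Dividing by $\|B_\mu\|_{\mathrm{F}}^2$ gives $\lambda_1(T_\mu)\ge\|B_\mu\|_{\mathrm{F}}^2$. If $B_\mu=0$ the inequality is trivial.

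\textbf{Middle inequality.} It is trivial, because all eigenvalues are nonnegative by Remark \ref{tmupsd}.

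\textbf{Right inequality.} This is the main step. Write $\|T_\mu\|_{\mathrm{HS}}^2=\sum_i\lambda_i^2$. Take $X,Y$ i.i.d.\ with law $\mu$. Expanding in the orthonormal basis gives
\[\|T_\mu\|_{\mathrm{HS}}^2=\sum_{k,l}\langle T_\mu E_k,E_l\rangle^2=\mathbb{E}\,\langle XX^T,YY^T\rangle^2=\mathbb{E}\langle X,Y\rangle^4.\]
Condition on $Y$ and apply (\ref{l4l2}) with $v=Y$:
\[\mathbb{E}_X\langle X,Y\rangle^4\le\beta^4\big(Y^TB_\mu Y\big)^2.\]
It then remains to bound $\mathbb{E}(Y^TB_\mu Y)^2$. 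Diagonalize $B_\mu=\sum_j s_ju_ju_j^T$ with $s_j\ge0$, so that $Y^TB_\mu Y=\sum_j s_j\langle Y,u_j\rangle^2$. By Minkowski's inequality in $L^2$,
\[\big(\mathbb{E}(Y^TB_\mu Y)^2\big)^{1/2}\le\sum_j s_j\big(\mathbb{E}\langle Y,u_j\rangle^4\big)^{1/2}\le\beta^2\sum_j s_j\,u_j^TB_\mu u_j=\beta^2\sum_j s_j^2.\]
Hence $\mathbb{E}(Y^TB_\mu Y)^2\le\beta^4\|B_\mu\|_{\mathrm{F}}^4$, and so $\|T_\mu\|_{\mathrm{HS}}^2\le\beta^8\|B_\mu\|_{\mathrm{F}}^4$. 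Taking square roots yields the claimed bound $\beta^4\|B_\mu\|_{\mathrm{F}}^2$.

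I expect this last estimate to be the delicate point. A cruder approach that uses $\mathbb{E}(Y^TB_\mu Y)^2\le\|B_\mu\|_{\mathrm{op}}\cdots$ or Cauchy--Schwarz across the eigendirections would lose dimension factors. Using the eigenbasis of $B_\mu$ together with Minkowski is what produces the Frobenius norm exactly.
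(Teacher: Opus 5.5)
Your proof is correct and follows the paper's argument essentially step for step. For (\ref{firstmaineq1}) you use the trace identity together with the test matrix $I$, for the left inequality you use Jensen, and for the right inequality you use $\mathrm{Tr}(T_\mu^2)=\mathbb{E}\langle X,Y\rangle^4$ followed by (\ref{l4l2}) with $v=Y$. Your eigenbasis--Minkowski bound $\mathbb{E}(Y^TB_\mu Y)^2\le\beta^4\|B_\mu\|_{\mathrm{F}}^4$ is exactly Lemma \ref{lpl2advance} with $p=4$ and $M=B_\mu^{1/2}$, specialized to the eigenbasis. The paper's version works in any orthonormal basis, so the eigenbasis is convenient but not what makes the Frobenius norm appear.
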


\begin{remark}
The coefficient $\frac{1}{d}$ on the right hand side of (\ref{firstmaineq1}) is optimal, since when $\mu=\mathcal{N}(0,I)$, by Example \ref{examplelambdastandardnormal}, we have $\lambda_{1}(T_{\mu})=d+2$ and $\sum_{i=1}^{d(d+1)/2}\lambda_{i}(T_{\mu})=d(d+2)$.
\end{remark}

The second main result of this paper gives an upper bound and a lower bound for $s(\mu)$ in terms of $\lambda_{1}(T_{\mu}),\lambda_{2}(T_{\mu})$ and $\|B_{\mu}\|_{\mathrm{F}}$ under the $L^{8}$-$L^{2}$ equivalence assumption (see Appendix \ref{lpl2section}).

\begin{theorem} \label{thm:spectral_decomposition}
Suppose that $\mu\in\mathcal{P}_{8}(\mathbb{R}^{d})$, $\mu\neq\delta_{0}$ and $\beta\geq 1$ satisfy
\begin{equation}\label{l8l2}
\left(\int_{\mathbb{R}^d}|\langle x,v\rangle|^{8}\,d\mu(x)\right)^{1/8}\leq\beta\left(\int_{\mathbb{R}^d}|\langle x,v\rangle|^{2}\,d\mu(x)\right)^{{1/2}},
\end{equation}
for all $v\in\mathbb{R}^{d}$. Then
\begin{equation}\label{secondmaineq1}
\frac{1}{200\beta^{8}}\cdot\left[\frac{\lambda_{2}(T_{\mu})}{\lambda_{1}(T_{\mu})}+\left(1-\frac{\|B_{\mu}\|_{\mathrm{F}}^{2}}{\lambda_{1}(T_{\mu})}\right)\right]^{3}\leq\left(\frac{s(\mu)}{\|B_{\mu}\|_{\mathrm{F}}}\right)^{2}\leq 4\left[\frac{\lambda_{2}(T_{\mu})}{\lambda_{1}(T_{\mu})}+\left(1-\frac{\|B_{\mu}\|_{\mathrm{F}}^{2}}{\lambda_{1}(T_{\mu})}\right)\right].
\end{equation}
\end{theorem}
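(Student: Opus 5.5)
The plan is to recast $s(\mu)$ as a variational problem over bounded functions and compare it with the top eigenvalue of the rank-one perturbation $S:=T_\mu-B_\mu\otimes B_\mu$, where $S(A)=T_\mu(A)-\langle A,B_\mu\rangle B_\mu$. Any decomposition $\mu=\frac12\mu_1+\frac12\mu_2$ has $d\mu_1=(1+h)\,d\mu$ and $d\mu_2=(1-h)\,d\mu$ with $|h|\le 1$ and $\int h\,d\mu=0$. Conversely, every such $h$ gives a decomposition. Then $\frac12(B_{\mu_1}-B_{\mu_2})=\int h\,xx^T\,d\mu$, so
\[
s(\mu)=\sup\Big\{\Big\|\int h\,xx^T\,d\mu\Big\|_{\mathrm F}:\ |h|\le 1,\ \textstyle\int h\,d\mu=0\Big\}.
\]
For $A\in\mathbb{R}^{d\times d}_{\mathrm{sym}}$ with $\|A\|_{\mathrm F}=1$, put $g_A(x):=\langle A,xx^T\rangle-\langle A,B_\mu\rangle$. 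Then $\int g_A^2\,d\mu=\langle S A,A\rangle$, so $S$ is positive semidefinite. Moreover, when $\int h\,d\mu=0$ we have $\langle A,\int h\,xx^T\,d\mu\rangle=\int h\,g_A\,d\mu$. Both bounds in (\ref{secondmaineq1}) will come from comparing $s(\mu)^2$ with $\lambda_1(S)$. Throughout write $\lambda_i=\lambda_i(T_\mu)$, $b=\|B_\mu\|_{\mathrm F}$ and $\Delta=\frac{\lambda_2}{\lambda_1}+1-\frac{b^2}{\lambda_1}$.

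\emph{Upper bound.} By Cauchy--Schwarz and $\|h\|_{L^2(\mu)}\le 1$, we get $\langle A,\int h\,xx^T\,d\mu\rangle^2\le\langle SA,A\rangle$, hence $s(\mu)^2\le\lambda_1(S)$. It remains to show $\lambda_1(S)\le 4b^2\Delta$. If $\Delta\ge\frac14$, this follows from the bound $s(\mu)\le b$ in (\ref{eq:s_remark}). Suppose instead $\Delta<\frac14$, so that $b^2/\lambda_1>\frac34$ and $\lambda_2<\frac14\lambda_1$. Let $U$ be a unit top eigenvector of $T_\mu$ and set $c=\langle B_\mu,U\rangle$. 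Comparing $\langle T_\mu B_\mu,B_\mu\rangle\ge b^4$ with the spectral upper bound $\lambda_1c^2+\lambda_2(b^2-c^2)$ gives
\[
c^2(\lambda_1-\lambda_2)\ge b^2(b^2-\lambda_2),
\]
so $B_\mu$ is nearly parallel to $U$. Now write a unit $A=aU+W$ with $W\perp U$. Then
\[
\langle SA,A\rangle\le a^2\lambda_1+\|W\|_{\mathrm F}^2\lambda_2-(ac+\langle W,B_\mu\rangle)^2 .
\]
Maximizing over $a$ and $W$, with the near-alignment used to control $\langle W,B_\mu\rangle$, should give $\lambda_1(S)\lesssim\lambda_2+(\lambda_1-b^2)=\lambda_1\Delta$. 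Since $b^2\ge\frac34\lambda_1$ in this case, that is at most $4b^2\Delta$. This elementary two-by-two optimization is routine but needs care with constants.

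\emph{Lower bound.} First, $\lambda_1(S)\ge\frac12\lambda_1\Delta\ge\frac12 b^2\Delta$, using $\lambda_1\ge b^2$ from Theorem \ref{firstmain}. The first inequality holds because $\lambda_1(S)\ge\lambda_2$ by interlacing for a rank-one perturbation, and $\lambda_1(S)\ge\langle SU,U\rangle=\lambda_1-c^2\ge\lambda_1-b^2$. Take $A$ to be a unit top eigenvector of $S$, write $g=g_A$, and set $\sigma^2=\int g^2\,d\mu=\lambda_1(S)$. A moment estimate from (\ref{l8l2}) is needed next. Diagonalize $A=\sum a_iv_iv_i^T$ with unit $v_i$. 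Then
\[
\|\langle A,xx^T\rangle\|_{L^4}\le\sum|a_i|\,\|\langle x,v_i\rangle\|_{L^8}^2\le\beta^2\sum|a_i|\langle B_\mu v_i,v_i\rangle\le\beta^2 b,
\]
so $\|g\|_{L^4}\le 2\beta^2 b$.

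Now define the test function $h=\frac12\big(\phi-\int\phi\,d\mu\big)$, where $\phi=\max(-1,\min(1,g/t))$ is a clipping at level $t>0$. Then $|h|\le1$ and $\int h\,d\mu=0$, and since $\int g\,d\mu=0$ we have $\int h g\,d\mu=\frac12\int\phi g\,d\mu$. On $\{|g|\le t\}$ we have $\phi g=g^2/t$, so
\[
\int\phi g\,d\mu\ge\frac1t\Big(\sigma^2-\int_{|g|>t}g^2\,d\mu\Big)\ge\frac1t\Big(\sigma^2-\frac{\|g\|_4^4}{t^2}\Big).
\]
Choose $t\asymp\|g\|_4^2/\sigma$ so that the bracket is at least $\sigma^2/2$. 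This gives $s(\mu)\ge\int hg\,d\mu\gtrsim\sigma^3/(\beta^4b^2)$. Squaring and dividing by $b^2$,
\[
\Big(\frac{s(\mu)}{b}\Big)^2\gtrsim\frac{1}{\beta^8}\Big(\frac{\sigma^2}{b^2}\Big)^3\ge\frac{1}{8\beta^8}\Delta^3 .
\]
Tracking constants should yield the factor $\frac1{200}$. The main obstacle I expect is the constant bookkeeping in the upper bound, specifically the case analysis converting the alignment of $B_\mu$ with $U$ into $\lambda_1(S)\le 4b^2\Delta$. The lower bound's truncation argument is conceptually where the $L^8$--$L^2$ assumption and the cubic exponent enter, but it is more mechanical.
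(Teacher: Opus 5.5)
\textbf{The upper bound has a genuine gap in your case $\Delta<\frac14$, and it is a gap of order of magnitude, not of constants.} Your reformulation via densities $1\pm h$, the Cauchy--Schwarz bound $s(\mu)^2\le\lambda_1(S)$, and the estimate $\lambda_1(S)\ge\frac12\lambda_1\Delta$ are all correct.

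The problem is that the only use you make of $S\succeq 0$ is its value at $A=B_\mu$, namely the near-alignment $c^2(\lambda_1-\lambda_2)\ge b^2(b^2-\lambda_2)$. Combined with the relaxation $\langle T_\mu W,W\rangle\le\lambda_2\|W\|_{\mathrm F}^2$, this cannot give $\lambda_1(S)\lesssim\lambda_1\Delta$.

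Here is a concrete check. Write $B'=B_\mu-cU$ and take $\lambda_1=1$, $\lambda_2=0$, $b^2=1-\epsilon$, $c=1-\epsilon$, $\|B'\|_{\mathrm F}^2=\epsilon(1-\epsilon)$. Your alignment inequality holds, with equality, and $\Delta=\epsilon$. Yet the choice $a=1/\sqrt2$, $W=-B'/(\sqrt2\|B'\|_{\mathrm F})$ makes your majorant equal to
\[
\tfrac12\bigl(1-(c-\|B'\|_{\mathrm F})^2\bigr)=\tfrac{\epsilon}{2}+(1-\epsilon)^{3/2}\sqrt{\epsilon}\approx\sqrt{\epsilon},
\]
which is far above $4b^2\Delta\approx 4\epsilon$. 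So the two-by-two optimization, fed only these inputs, yields something of order $\lambda_2+\sqrt{\lambda_1(\lambda_1-b^2)}$ rather than $\lambda_1\Delta$. This configuration is not actually realizable: with $\lambda_2=0$, $T_\mu$ has rank one and $S\succeq0$ forces $B_\mu\parallel U$. That is precisely the information your argument discards.

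\textbf{The missing idea is to use positivity of $S$ on the whole space.} The paper does this in Lemma \ref{lem:T-xtensorx_bound}, in two steps.
\begin{itemize}
\item Testing $S\succeq0$ at $A=B'$ gives $\|B'\|_{\mathrm F}^4=\langle B',B_\mu\rangle^2\le\langle T_\mu B',B'\rangle\le\lambda_2\|B'\|_{\mathrm F}^2$. Hence $\langle SU,U\rangle=\lambda_1-b^2+\|B'\|_{\mathrm F}^2\le\lambda_1-b^2+\lambda_2$.
\item The cross term $\langle SU,W\rangle$ is controlled by Cauchy--Schwarz for the semi-inner product $\langle S\cdot,\cdot\rangle$ (the pinching inequality, Lemma \ref{lem:audenart}), rather than by expanding $T_\mu$ and $B_\mu\otimes B_\mu$ separately.
\end{itemize}
Together these give $\lambda_1(S)\le\lambda_1-b^2+2\lambda_2\le 2\lambda_1\Delta\le\frac83 b^2\Delta$ when $\Delta<\frac14$.

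You can also repair your own reduction. If $\lambda_2>0$, test $B_\mu\otimes B_\mu\preceq T_\mu$ at $A=\frac{c}{\lambda_1}U+\frac{1}{\lambda_2}B'$ to get $c^2/\lambda_1+\|B'\|_{\mathrm F}^2/\lambda_2\le 1$. This is exactly the condition for your $2\times 2$ matrix $\mathrm{diag}(\lambda_1,\lambda_2)-vv^T$, with $v=(c,\|B'\|_{\mathrm F})$, to be positive semidefinite. Its top eigenvalue is then at most its trace, so the maximum of your majorant is at most $\lambda_1+\lambda_2-b^2=\lambda_1\Delta$.

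\textbf{Separately, your lower bound does not reach the constant $\frac1{200}$.} The truncation device is a legitimate substitute for the paper's median split, but two losses compound: the factor $\frac12$ in $h$, and the centering bound $\|g\|_{L^4}\le 2\beta^2 b$. With your choice of $t$ this gives $\Delta^3/(4096\beta^8)$, and no better than $\Delta^3/(512\beta^8)$ even with an optimal truncation level. To obtain $\frac1{200}$, proceed as the paper does:
\begin{itemize}
\item split with $h=\pm1$ at a median $b_0$ of $W=\langle AX,X\rangle$, so that $s(\mu)\ge\mathbb{E}|W-b_0|$;
\item apply H\"older together with the uncentered bound $\mathbb{E}W^4\le\beta^8b^4$, as in Lemma \ref{lem:inf_of_mean_bounds}.
\end{itemize}
This yields $s(\mu)\ge\sigma^3/(5\beta^4 b^2)$.
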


\begin{remark}
Since the upper bound in (\ref{secondmaineq1}) does not involve $\beta$, the upper bound does not actually require the assumption (\ref{l8l2}).  Only the lower bound needs (\ref{l8l2}).
\end{remark}

\begin{remark}
Each of the three terms $\frac{\lambda_{2}(T_{\mu})}{\lambda_{1}(T_{\mu})}$, $1-\frac{\|B_{\mu}\|_{\mathrm{F}}^{2}}{\lambda_{1}(T_{\mu})}$ and $\frac{s(\mu)}{\|B_{\mu}\|_{\mathrm{F}}}$ that appear in (\ref{secondmaineq1}) is between $0$ and $1$. To see this, note that $0\leq\frac{\lambda_{2}(T_{\mu})}{\lambda_{1}(T_{\mu})}\leq 1$ is obvious; that $0\leq 1-\frac{\|B_{\mu}\|_{\mathrm{F}}^{2}}{\lambda_{1}(T_{\mu})}\leq 1$ follows from Theorem \ref{firstmain}; and that $0\leq\frac{s(\mu)}{\|B_{\mu}\|_{\mathrm{F}}}\leq 1$ follows from (\ref{eq:s_remark}).
\end{remark}

\begin{remark}
Our proof of the lower bound in (\ref{secondmaineq1}) is constructive. 
Specifically, we use a quadratic separation to construct a mixture decomposition of $\mu$. 
This is unlike the commonly used linear hyperplane separation in the literature (e.g., $k$-means with $k=2$ and KLS conjecture). For example, when $\mu=\frac{1}{2}\mathcal{N}(0,I)+\frac{1}{2}\mathcal{N}(0,2I)$, the first component $\mathcal{N}(0,I)$ is concentrated around the sphere $\{x\in\mathbb{R}^{d}:\,\|x\|_{2}=\sqrt{d}\}$, whereas the second component $\mathcal{N}(0,2I)$ is concentrated around the sphere $\{x\in\mathbb{R}^{d}:\,\|x\|_{2}=\sqrt{2d}\}$. The most natural way to separate these two spheres is to separate along a middle sphere: $\|x\|_{2}<1.1\sqrt{d}$ and $\|x\|_{2}>1.1\sqrt{d}$ rather than separating along a hyperplane. For more details of the construction of the mixture decomposition of $\mu$ in proving the lower bound in (\ref{secondmaineq1}), we refer to Proposition \ref{secondmainlbexplicit} below.
\end{remark}

Next, we give a complete resolution of Problem \ref{mainproblemasymptotic} in the case when the measures satisfy the $L^{8}$-$L^{2}$ assumption. This result follows immediately from Theorem \ref{thm:spectral_decomposition}.
\begin{corollary}\label{asymptoticresolution}
Let $(\mu^{(n)})_{n\in\mathbb{N}}$ be a sequence of probability measures with $\mu^{(n)}\in\mathcal{P}_{8}(\mathbb{R}^{d_{n}})$, where $d_{n}\in\mathbb{N}$, and $\mu^{(n)}\neq\delta_{0}$. Suppose that there exists $1\leq\beta<\infty$ satisfying
\[\left(\int_{\mathbb{R}^d}|\langle x,v\rangle|^{8}\,d\mu^{(n)}(x)\right)^{1/8}\leq\beta\left(\int_{\mathbb{R}^d}|\langle x,v\rangle|^{2}\,d\mu^{(n)}(x)\right)^{{1/2}},\]
for all $v\in\mathbb{R}^{d}$ and $n\in\mathbb{N}$. Then the following statements are equivalent:
\begin{enumerate}[(1)]
\item $\displaystyle\lim_{n\to\infty}\frac{s(\mu^{(n)})}{\|B_{\mu^{(n)}}\|_{\mathrm{F}}}=0$;
\item $\displaystyle\lim_{n\to\infty}\frac{\lambda_{2}(T_{\mu^{(n)}})}{\lambda_{1}(T_{\mu^{(n)}})}=0$ and $\displaystyle\lim_{n\to\infty}\frac{\|B_{\mu^{(n)}}\|_{\mathrm{F}}^{2}}{\lambda_{1}(T_{\mu^{(n)}})}=1$.
\end{enumerate}
\end{corollary}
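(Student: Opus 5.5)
The plan is to deduce both directions directly from Theorem \ref{thm:spectral_decomposition}, applied to each $\mu^{(n)}$ with the common constant $\beta$. For each $n$ set
\[a_{n}:=\frac{\lambda_{2}(T_{\mu^{(n)}})}{\lambda_{1}(T_{\mu^{(n)}})}+\left(1-\frac{\|B_{\mu^{(n)}}\|_{\mathrm{F}}^{2}}{\lambda_{1}(T_{\mu^{(n)}})}\right),\qquad t_{n}:=\frac{s(\mu^{(n)})}{\|B_{\mu^{(n)}}\|_{\mathrm{F}}}.\]

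First we check that these quantities are well defined. Since $\mu^{(n)}\neq\delta_{0}$, we have $B_{\mu^{(n)}}\neq 0$. By the lower bound in (\ref{firstmaineq2}) of Theorem \ref{firstmain}, which applies because $L^{8}$-$L^{2}$ equivalence implies $L^{4}$-$L^{2}$ equivalence by H\"older, this gives $\lambda_{1}(T_{\mu^{(n)}})\geq\|B_{\mu^{(n)}}\|_{\mathrm{F}}^{2}>0$. Hence $a_{n}$ is a sum of two nonnegative terms, each lying in $[0,1]$, as noted in the remark after Theorem \ref{thm:spectral_decomposition}.

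With this notation, (\ref{secondmaineq1}) reads
\[\frac{1}{200\beta^{8}}a_{n}^{3}\leq t_{n}^{2}\leq 4a_{n}\quad\text{for all } n.\]

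To show (2)$\Rightarrow$(1): condition (2) says exactly that both summands of $a_{n}$ tend to $0$, so $a_{n}\to 0$. The upper bound then gives $t_{n}^{2}\to 0$, hence $t_{n}\to 0$.

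To show (1)$\Rightarrow$(2): if $t_{n}\to 0$, the lower bound gives $a_{n}^{3}\leq 200\beta^{8}t_{n}^{2}\to 0$, using that $\beta$ is independent of $n$. Since $a_{n}$ is a sum of two nonnegative terms tending to $0$, each term tends to $0$. This is precisely (2).

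There is no real obstacle. The only points needing care are:
\begin{itemize}
\item the uniformity of $\beta$ in $n$, which is what makes the lower bound usable;
\item the nonnegativity of both summands of $a_{n}$, which follows from Theorem \ref{firstmain} and $\lambda_{2}\geq 0$ (Remark \ref{tmupsd}), and which is what lets us split $a_{n}\to 0$ into the two separate limits in (2).
\end{itemize}
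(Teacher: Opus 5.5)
Your proposal is correct and matches the paper, which says only that the corollary follows immediately from Theorem \ref{thm:spectral_decomposition}; you have written out that sandwich argument in full. You also correctly identify the two points that make it work: $\beta$ is uniform in $n$, and both summands are nonnegative, via $\lambda_{1}(T_{\mu})\geq\|B_{\mu}\|_{\mathrm{F}}^{2}>0$ and positive semidefiniteness of $T_{\mu}$.
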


\begin{remark}
In the statement (2) in Corollary \ref{asymptoticresolution}, the first condition $\lim_{n\to\infty}\frac{\lambda_{2}(T_{\mu^{(n)}})}{\lambda_{1}(T_{\mu^{(n)}})}=0$ means that the second largest eigenvalue is significantly smaller than the first largest eigenvalue, but the second condition $\lim_{n\to\infty}\frac{\|B_{\mu^{(n)}}\|_{\mathrm{F}}^{2}}{\lambda_{1}(T_{\mu^{(n)}})}=1$ may seem less intuitive. To see why this condition is essential, fix a measure $\mu\in\mathcal{P}_{8}(\mathbb{R}^{d})$ and consider the measure $\nu=\frac{1}{2}\mu+\frac{1}{2}\delta_{0}$. Since $\nu=\frac{1}{2}\mu+\frac{1}{2}\delta_{0}$, we have $s(\nu)\geq\frac{1}{2}\|B_{\mu}-B_{\delta_{0}}\|_{\mathrm{F}}=\frac{1}{2}\|B_{\mu}\|_{\mathrm{F}}$. Note that $B_{\nu}=\frac{1}{2}B_{\mu}$. Therefore, $s(\nu)\geq\|B_{\nu}\|_{\mathrm{F}}$, so by (\ref{eq:s_remark}), we have $s(\nu)=\|B_{\nu}\|_{\mathrm{F}}$, or equivalently, $\frac{s(\nu)}{\|B_{\nu}\|_{\mathrm{F}}}=1$.
On the other hand, since $\nu=\frac{1}{2}\mu+\frac{1}{2}\delta_{0}$, we have $T_{\nu}=\frac{1}{2}T_{\mu}$ and $B_{\nu}=\frac{1}{2}B_{\mu}$. So
\[\frac{\lambda_{2}(T_{\nu})}{\lambda_{1}(T_{\nu})}=\frac{\lambda_{2}(T_{\mu})}{\lambda_{1}(T_{\mu})}\quad\text{and}\quad\frac{\|B_{\nu}\|_{\mathrm{F}}^{2}}{\lambda_{1}(T_{\nu})}=\frac{1}{2}\cdot\frac{\|B_{\mu}\|_{\mathrm{F}}^{2}}{\lambda_{1}(T_{\mu})}.\]
From this, we can see that merely the ratio $\frac{\lambda_{2}(T_{\mu})}{\lambda_{1}(T_{\mu})}$ does not give enough information about $\frac{s(\mu)}{\|B_{\mu}\|_{\mathrm{F}}}$. Thus, it is essential to also consider the quantity $\frac{\|B_{\mu}\|_{\mathrm{F}}^{2}}{\lambda_{1}(T_{\mu})}$.
\end{remark}

\begin{remark}\label{spectralgraphresemble}
Roughly speaking, Corollary \ref{asymptoticresolution} says that a probability measure $\mu$ satisfies the single distribution hypothesis if and only if $\lambda_{1}(T_{\mu})\gg\lambda_{2}(T_{\mu})$ and $\frac{\|B_{\mu}\|_{\mathrm{F}}^{2}}{\lambda_{1}(T_{\mu})}\approx 1$. This is analogous to the famous fact in spectral graph theory that a regular graph is connected if and only if the first largest eigenvalue of its adjacency matrix is strictly larger than the second largest eigenvalue.
\end{remark}

\subsection{Some variants of the second order separation parameter}\label{unequalsubsection}
Here we mention two variants of the quantity $s(\mu)$. For the first variant, we change equal weight mixtures to unequal weight mixtures. For the second one, we take into account the difference between the first order statistics of the two subpopulations.

Recall that in the definition of $s(\mu)$, the supremum in (\ref{defsmueq}) is taken over all possible equal weight mixture decompositions $\mu=\frac{1}{2}\mu_{1}+\frac{1}{2}\mu_{2}$. The following lemma says that if we modify the definition of $s(\mu)$ so that the supremum is taken over unequal weight mixture decompositions, the resulting quantity differs by at most a constant factor that depends on the weights.
\begin{lemma}\label{unequal}
Let $\mu\in\mathcal{P}_{2}(\mathbb{R}^{d})$ and $0<\alpha\leq\frac{1}{2}$. Then
\[\frac{1}{2(1-\alpha)}s(\mu)\leq\sup_{\mu=\alpha\nu_{1}+(1-\alpha)\nu_{2}}\|B_{\nu_{1}}-B_{\nu_{2}}\|_{\mathrm{F}}\leq\frac{1}{2\alpha}s(\mu),\]
where the supremum is over all probability measures $\nu_{1},\nu_{2}$ on $\mathbb{R}^{d}$ such that $\mu=\alpha\nu_{1}+(1-\alpha)\nu_{2}$.
\end{lemma}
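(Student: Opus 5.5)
The plan is to reduce both equal-weight and unequal-weight decompositions to the choice of a sub-measure of $\mu$, and then to pass between the two kinds of sub-measure by rescaling in one direction and extending in the other. Throughout, ``$\rho\le\mu$'' means $\mu-\rho$ is a nonnegative measure.

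\textbf{Parametrization.} Decompositions $\mu=\alpha\nu_{1}+(1-\alpha)\nu_{2}$ correspond bijectively to sub-measures $\rho\le\mu$ of total mass $\alpha$, via $\rho=\alpha\nu_{1}$ and $(1-\alpha)\nu_{2}=\mu-\rho$. Since $B$ is linear in the measure,
\[
B_{\nu_{1}}-B_{\nu_{2}}=\frac{B_{\rho}}{\alpha}-\frac{B_{\mu}-B_{\rho}}{1-\alpha}=\frac{B_{\rho}-\alpha B_{\mu}}{\alpha(1-\alpha)}.
\]
Write $S_{\alpha}:=\sup_{\mu=\alpha\nu_1+(1-\alpha)\nu_2}\|B_{\nu_{1}}-B_{\nu_{2}}\|_{\mathrm F}$. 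Then
\[
S_{\alpha}=\frac{1}{\alpha(1-\alpha)}\sup_{\rho\le\mu,\ \rho(\mathbb R^d)=\alpha}\|B_\rho-\alpha B_\mu\|_{\mathrm F}.
\]
The case $\alpha=\tfrac12$ gives $\|B_{\mu_1}-B_{\mu_2}\|_{\mathrm F}=4\|B_{\rho'}-\tfrac12B_\mu\|_{\mathrm F}$ with $\rho'\le\mu$ of mass $\tfrac12$. Consequently
\[
s(\mu)=2\sup_{\rho'}\|B_{\rho'}-\tfrac12B_\mu\|_{\mathrm F}.
\]

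\textbf{Lower bound (shrinking).} Given $\rho'\le\mu$ of mass $\tfrac12$, set $\rho=2\alpha\rho'$. Since $2\alpha\le1$, we have $\rho\le\rho'\le\mu$, and $\rho$ has mass $\alpha$. Moreover
\[
B_\rho-\alpha B_\mu=2\alpha\bigl(B_{\rho'}-\tfrac12B_\mu\bigr).
\]
Taking suprema gives $S_\alpha\ge \frac{2}{1-\alpha}\sup_{\rho'}\|B_{\rho'}-\tfrac12 B_\mu\|_{\mathrm F}=\frac{s(\mu)}{1-\alpha}$.

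\textbf{Upper bound (extending).} Given $\rho\le\mu$ of mass $\alpha$, set $\rho'=\rho+t(\mu-\rho)$ with $t=\frac{1/2-\alpha}{1-\alpha}\in[0,1)$. Then $\rho'\le\mu$ and $\rho'$ has mass $\tfrac12$. A one-line computation gives
\[
B_{\rho'}-\tfrac12B_\mu=\frac{B_\rho-\alpha B_\mu}{2(1-\alpha)}.
\]
Hence $\|B_\rho-\alpha B_\mu\|_{\mathrm F}\le 2(1-\alpha)\cdot\tfrac12 s(\mu)$, and so $S_\alpha\le s(\mu)/\alpha$. This extension step is the only non-obvious construction; the rest is linearity of $\mu\mapsto B_\mu$.

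\textbf{Matching the displayed constants.} The two steps give $\frac{1}{1-\alpha}s(\mu)\le S_\alpha\le\frac{1}{\alpha}s(\mu)$. The displayed constants $\frac{1}{2(1-\alpha)}$ and $\frac1{2\alpha}$ are what one gets when the supremum in the lemma carries the same prefactor $\tfrac12$ as in (\ref{defsmueq}), i.e.\ for $\tfrac12S_\alpha$. I would state this normalization explicitly in the write-up. The case $\alpha=\tfrac12$ forces it: there the unnormalized supremum equals $2s(\mu)$ by definition, whereas the lemma as displayed would make it equal to $s(\mu)$. So the main point needing care is checking that the chosen normalization reproduces exactly these factors, not any analytic difficulty.
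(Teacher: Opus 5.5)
Your proof is correct and is the same as the paper's. Your sub-measures $\rho=2\alpha\rho'$ and $\rho'=\rho+t(\mu-\rho)$ are exactly the paper's two decompositions, $\mu=\alpha\mu_1+(1-\alpha)\bigl(\tfrac{1/2-\alpha}{1-\alpha}\mu_1+\tfrac{1}{2(1-\alpha)}\mu_2\bigr)$ and $\mu=\tfrac12\bigl(2\alpha\nu_1+(1-2\alpha)\nu_2\bigr)+\tfrac12\nu_2$, rephrased via $\rho\le\mu$. Your normalization remark is also right: the paper's own reduction is phrased in terms of $\sup\|B_{\mu_1}-B_{\mu_2}\|_{\mathrm F}=2s(\mu)$, so it too only establishes $\frac{1}{1-\alpha}s(\mu)\le\sup\|B_{\nu_1}-B_{\nu_2}\|_{\mathrm F}\le\frac{1}{\alpha}s(\mu)$, and the displayed upper bound genuinely needs the factor $\tfrac12$ on the middle supremum, since otherwise it fails at $\alpha=\tfrac12$ whenever $s(\mu)>0$.
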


Next, in the definition of $s(\mu)$, the term inside the supremum in (\ref{defsmueq}) only takes into account the difference between the second order statistics of $\mu_{1}$ and $\mu_{2}$. If we wish to take into account the difference in the first order statistics of $\mu_{1}$ and $\mu_{2}$, we can replace the measure $\mu$ by the product measure $\delta_{a}\times\mu$ on $\mathbb{R}^{d+1}$, where $a\in\mathbb{R}$ is fixed. Indeed,
\begin{align*}
s(\delta_{a}\times\mu)=&
\frac{1}{2}\cdot\sup_{\mu=\frac{1}{2}\mu_{1}+\frac{1}{2}\mu_{2}}\|B_{\delta_{a}\times\mu_{1}}-B_{\delta_{a}\times\mu_{2}}\|_{\mathrm{F}}\\=&
\frac{1}{2}\cdot\sup_{\mu=\frac{1}{2}\mu_{1}+\frac{1}{2}\mu_{2}}\left\|\int_{\mathbb{R}^d}\begin{bmatrix}a\\x\end{bmatrix}\begin{bmatrix}a\\x\end{bmatrix}^{T}\,d\mu_{1}(x)-\int_{\mathbb{R}^d}\begin{bmatrix}a\\x\end{bmatrix}\begin{bmatrix}a\\x\end{bmatrix}^{T}\,d\mu_{2}(x)\right\|_{\mathrm{F}}\\=&
\frac{1}{2}\cdot\sup_{\mu=\frac{1}{2}\mu_{1}+\frac{1}{2}\mu_{2}}\bigg(\left\|\int_{\mathbb{R}^{d}}xx^{T}\,d\mu_{1}(x)-\int_{\mathbb{R}^{d}}xx^{T}\,d\mu_{2}(x)\right\|_{\mathrm{F}}^{2}\\&
+2a^{2}\left\|\int_{\mathbb{R}^{d}}x\,d\mu_{1}(x)-\int_{\mathbb{R}^{d}}x\,d\mu_{2}(x)\right\|_{2}^{2}\bigg)^{1/2}.
\end{align*}
Here the supremum is over all probability measures $\mu_{1},\mu_{2}$ on $\mathbb{R}^{d}$ such that $\mu=\frac{1}{2}\mu_{1}+\frac{1}{2}\mu_{2}$.

\subsection{Notation}

Recall that $\mathcal{P}_{p}(\mathbb{R}^{d})$, $\delta_{x}$, $\|\,\|_{2}$ are defined at the beginning of the paper.

Throughout this work, all inner product spaces are over the field $\mathbb{R}$. We use the same notation $\langle\cdot,\cdot\rangle$ for both the canonical inner product on $\mathbb{R}^{d}$ and the inner product on $\mathbb{R}_{\mathrm{sym}}^{d\times d}$ defined in Definition \ref{def:T_mu}. In particular, $\langle A,xx^{T}\rangle=\langle Ax,x\rangle$ for all $A\in\mathbb{R}_{\mathrm{sym}}^{d\times d}$ and $x\in\mathbb{R}^{d}$.

If $\mathcal{H}$ is a finite dimensional inner product space and $z \in\mathcal{H}$, then $z \otimes z$ is the rank one linear operator on $\mathcal{H}$ defined by $v \mapsto \langle v,z\rangle z$ for $v \in\mathcal{H}$.

If $T:\mathcal{H}\to\mathcal{H}$ is a linear operator, then $\|T\|_{\mathrm{op}}:=\sup_{x\in\mathcal{H}\backslash\{0\}}\frac{\|Tx\|}{\|x\|}$ is the operator norm. If moreover, $T$ is self-adjoint, then $\lambda_{1}(T)\geq\ldots\geq\lambda_{\mathrm{dim}\,\mathcal{H}}(T)$ are the eigenvalues of $T$ in descending order. Note that if $T$ is positive semidefinite, then $\lambda_{1}(T)=\|T\|_{\mathrm{op}}$.

For $1\leq i\leq d$, denote by $e_{i}$ the vector in $\mathbb{R}^{d}$ with the $i$th entry equal to $1$ and all other entries equal to $0$. The $d\times d$ identity matrix is denoted by $I$. If $M$ is a $d\times d$ matrix, then $\|M\|_{\mathrm{F}}:=\sum_{i=1}^{d}\sum_{j=1}^{d}|\langle Me_{i},e_{j}\rangle|^{2}$ is the Frobernius norm of $M$, and $\mathrm{Tr}(M):=\sum_{i=1}^{d}\langle Me_{i},e_{i}\rangle$ is the trace of $M$. The transpose of $M$ is denoted by $M^{T}$.

If $w\in\mathbb{R}^{d}$ and $\Sigma$ is a $d\times d$ positive semidefinite matrix, then $\mathcal{N}(w,\Sigma)$ denotes the normal distribution on $\mathbb{R}^{d}$ with mean $w$ and covariance $\Sigma$.

\subsection{Organization of the paper} The rest of this paper is organized as follows. In Section \ref{1stmainproofsection}, we prove the first main result Theorem \ref{firstmain}. In Section \ref{2ndmainproofsection}, we prove the second main result Theorem \ref{thm:spectral_decomposition}. In Section \ref{smuexamplesproofs}, we prove the bounds for $s(\mu)$ stated in Example \ref{introproduct}-Example \ref{introgeneralmixture}. In Section \ref{examplesection}, we prove Examples \ref{examplelambdastandardnormal}-\ref{examplelambdamixture}. In Section \ref{unequalsection}, we prove Lemma \ref{unequal}. In Section \ref{discuss}, we discuss some possible future directions.

\section{Proof of the first main result}\label{1stmainproofsection}
In this section, we prove the first main result Theorem \ref{firstmain}.
\begin{lemma} \label{lem:l2norm_equality}
Let $\mu\in\mathcal{P}_{4}(\mathbb{R}^{d})$. Then 
\[
\int_{\mathbb{R}^d}\langle x,v\rangle^{2}\,d\mu(x) = \|B_{\mu}^{1/2}v\|_2^2,
\]
for all $v\in\mathbb{R}^{d}$.
\end{lemma}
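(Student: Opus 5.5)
The identity is a direct computation: expand the square, pull the integral inside, and recognize $B_{\mu}$. For fixed $v\in\mathbb{R}^{d}$ and every $x$ we have $\langle x,v\rangle^{2}=v^{T}xx^{T}v$. This is a quadratic expression in the entries of $xx^{T}$. Integrating against $\mu$ and using linearity of the integral, we will get
\[\int_{\mathbb{R}^d}\langle x,v\rangle^{2}\,d\mu(x)=v^{T}\Big(\int_{\mathbb{R}^d}xx^{T}\,d\mu(x)\Big)v=v^{T}B_{\mu}v.\]
This interchange is justified entrywise, since each entry $x_{i}x_{j}$ is integrable: $|x_{i}x_{j}|\leq\|x\|_{2}^{2}$, and $\mu\in\mathcal{P}_{4}(\mathbb{R}^{d})\subseteq\mathcal{P}_{2}(\mathbb{R}^{d})$, because $\|x\|_{2}^{2}\leq 1+\|x\|_{2}^{4}$.

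Next, $B_{\mu}$ is symmetric, and it is positive semidefinite since $v^{T}B_{\mu}v=\int\langle x,v\rangle^{2}\,d\mu\geq 0$. So it has a unique positive semidefinite square root $B_{\mu}^{1/2}$, which is symmetric. Therefore
\[v^{T}B_{\mu}v=v^{T}B_{\mu}^{1/2}B_{\mu}^{1/2}v=\langle B_{\mu}^{1/2}v,B_{\mu}^{1/2}v\rangle=\|B_{\mu}^{1/2}v\|_{2}^{2}.\]

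No real obstacle is expected. The only point needing care is the integrability remark that justifies moving $v^{T}(\cdot)v$ through the integral, and that is handled above.
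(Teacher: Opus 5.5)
Your proposal is correct and follows essentially the same route as the paper, which writes $\langle x,v\rangle^{2}=\langle xx^{T},vv^{T}\rangle$, integrates to get $\langle B_{\mu},vv^{T}\rangle=\langle B_{\mu}v,v\rangle$, and then factors through $B_{\mu}^{1/2}$. This is your computation written in trace-inner-product notation, and your integrability and positive-semidefiniteness remarks supply details the paper leaves implicit. One small wording slip: $v^{T}xx^{T}v$ is linear (not quadratic) in the entries of $xx^{T}$, and that linearity is what lets the integral pass through.
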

\begin{proof} We have 
\begin{align*}
\int_{\mathbb{R}^d}\langle x,v\rangle^{2}\,d\mu(x) 
&= \int_{\mathbb{R}^d}\langle xx^T,vv^T\rangle\,d\mu(x) \\
&= \langle B_{\mu},vv^T\rangle 
= \langle B_{\mu}v,v\rangle
= \langle B_{\mu}^{1/2}v,B_{\mu}^{1/2}v\rangle = \|B_{\mu}^{1/2}v\|_2^2,
\end{align*}
as claimed.
\end{proof}

\begin{lemma}\label{lpl2advance}
Suppose that $p\geq 2$, $\mu\in\mathcal{P}_{p}(\mathbb{R}^{d})$ and $\beta\geq 1$ satisfy
\begin{equation}\label{lpl2}
\left(\int_{\mathbb{R}^d}|\langle x,v\rangle|^{p}\,d\mu(x)\right)^{1/p}\leq\beta\left(\int_{\mathbb{R}^d}|\langle x,v\rangle|^{2}\,d\mu(x)\right)^{{1/2}},
\end{equation}
for all $v\in\mathbb{R}^{d}$. Then
\[\left(\int_{\mathbb{R}^{d}}\|Mx\|_{2}^{p}\,d\mu(x)\right)^{1/p}\leq\beta\|B_{\mu}^{\frac{1}{2}}M^{T}\|_{\mathrm{F}},\]
for all $d\times d$ matrix $M$.
\end{lemma}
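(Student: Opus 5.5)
The plan is to write $\|Mx\|_2^2$ as a sum of squares of linear functionals and then use Minkowski's inequality in $L^{p/2}(\mu)$, which is a genuine norm because $p\geq 2$. Let $m_1,\ldots,m_d$ denote the rows of $M$, so that $Mx=(\langle x,m_1\rangle,\ldots,\langle x,m_d\rangle)^T$ and $\|Mx\|_2^2=\sum_{j=1}^d\langle x,m_j\rangle^2$. Then
\[
\left(\int_{\mathbb{R}^d}\|Mx\|_2^p\,d\mu(x)\right)^{2/p}=\Big\|\sum_{j=1}^d\langle x,m_j\rangle^2\Big\|_{L^{p/2}(\mu)}.
\]

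First, apply the triangle inequality in $L^{p/2}(\mu)$. This bounds the right-hand side by
\[
\sum_{j=1}^d\big\|\langle x,m_j\rangle^2\big\|_{L^{p/2}(\mu)}=\sum_{j=1}^d\left(\int_{\mathbb{R}^d}|\langle x,m_j\rangle|^p\,d\mu(x)\right)^{2/p}.
\]

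Next, apply the hypothesis (\ref{lpl2}) with $v=m_j$ to each summand. Each term is then at most $\beta^2\int_{\mathbb{R}^d}\langle x,m_j\rangle^2\,d\mu(x)$. By Lemma \ref{lem:l2norm_equality}, this integral equals $\|B_\mu^{1/2}m_j\|_2^2$.

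Finally, sum over $j$. The vectors $B_\mu^{1/2}m_j$ are exactly the columns of $B_\mu^{1/2}M^T$, so
\[
\sum_{j=1}^d\|B_\mu^{1/2}m_j\|_2^2=\|B_\mu^{1/2}M^T\|_{\mathrm F}^2.
\]
Taking square roots of the resulting inequality gives the claim.

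The only point needing care is recognizing that the exponent $2/p$ turns the problem into a norm in $L^{p/2}$, so that Minkowski applies without losing a dimension-dependent factor. Everything else is routine bookkeeping of rows versus columns, together with the convention that $\|\cdot\|_{\mathrm F}$ denotes the usual square-root Frobenius norm.
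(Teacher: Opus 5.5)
Your proposal is correct and matches the paper's proof essentially step for step. The paper likewise expands $\|Mx\|_2^2=\sum_i\langle x,M^Te_i\rangle^2$ over the rows of $M$, applies Minkowski in $L^{p/2}(\mu)$ (valid since $p\geq 2$), uses the hypothesis and Lemma \ref{lem:l2norm_equality} termwise, and sums the resulting terms to $\beta^2\|B_\mu^{1/2}M^T\|_{\mathrm F}^2$.
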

\begin{proof}
Let $\{e_{1},\ldots,e_{d}\}$ be the canonical basis for $\mathbb{R}^{d}$. Then
\begin{eqnarray*}
\left(\int_{\mathbb{R}^{d}}\|Mx\|_{2}^{p}\,d\mu(x)\right)^{2/p}&=&
\left(\int_{\mathbb{R}^{d}}\left|\sum_{i=1}^{d}\langle Mx,e_{i}\rangle^{2}\right|^{p/2}\,d\mu(x)\right)^{2/p}\\&=&
\left(\int_{\mathbb{R}^{d}}\left|\sum_{i=1}^{d}\langle x,M^{T}e_{i}\rangle^{2}\right|^{p/2}\,d\mu(x)\right)^{2/p}\\&\leq&
\sum_{i=1}^{d}\left(\int_{\mathbb{R}^{d}}\left|\langle x,M^{T}e_{i}\rangle^{2}\right|^{p/2}\,d\mu(x)\right)^{2/p}\\&=&
\sum_{i=1}^{d}\left(\int_{\mathbb{R}^{d}}|\langle x,M^{T}e_{i}\rangle|^{p}\,d\mu(x)\right)^{2/p}\\&\leq&
\sum_{i=1}^{d}\beta^{2}\int_{\mathbb{R}^{d}}|\langle x,M^{T}e_{i}\rangle|^{2}\,d\mu(x)\\&=&
\sum_{i=1}^{d}\beta^{2}\|B_{\mu}^{1/2}M^{T}e_{i}\|_{2}^{2}\\&=&
\beta^{2}\|B_{\mu}^{1/2}M^{T}\|_{\mathrm{F}}^{2},
\end{eqnarray*}
where the third step follows from Minkowski's inequality and the assumption that $p\geq 2$, the fifth step follows from (\ref{lpl2}) and the sixth step follows from Lemma \ref{lem:l2norm_equality}.
\end{proof}
Before we prove Theorem \ref{firstmain}, let us make an observation. By definition, if $M \in \mathbb{R}_{\mathrm{sym}}^{d\times d}$, then $M \otimes M$ is the rank one linear operator on $\mathbb{R}_{\mathrm{sym}}^{d\times d}$ defined by $A \mapsto \inner{A}{M}M$ for $A \in \mathbb{R}_{\mathrm{sym}}^{d \times d}$. Hence, for every probability measure $\mu\in\mathcal{P}_{4}(\mathbb{R}^{d})$, we can write $T_{\mu}$ as an integral of rank one linear operators on $\mathbb{R}_{\mathrm{sym}}^{d \times d}$:
\begin{equation}\label{tmutensorform}
T_{\mu}=\int_{\mathbb{R}^{d}}(xx^{T})\otimes(xx^{T})\,d\mu(x).
\end{equation}
\begin{proof}[Proof of Theorem \ref{firstmain}]
We first prove (\ref{firstmaineq1}). We have
\[\lambda_{1}(T_{\mu})=\|T_{\mu}\|_{\mathrm{op}}\geq\frac{1}{\|I\|_{\mathrm{F}}^{2}}\langle T_{\mu}(I),I\rangle=\frac{1}{d}\int_{\mathbb{R}^{d}}\langle I,xx^{T}\rangle^{2}\,d\mu(x)=\frac{1}{d}\int_{\mathbb{R}^{d}}\|x\|_{2}^{4}\,d\mu(x).\]
On the other hand, since $T_{\mu}$ can be written as $\int_{\mathbb{R}^{d}}(xx^{T})\otimes(xx^{T})\,d\mu(x)$ by (\ref{tmutensorform}), the trace of the operator $T_{\mu}$ is
\[\sum_{i=1}^{d(d+1)/2}\lambda_{i}(T_{\mu})=\int_{\mathbb{R}^{d}}\langle xx^{T},xx^{T}\rangle\,d\mu(x)=\int_{\mathbb{R}^{d}}\|x\|_{2}^{4}\,d\mu(x),\]
where we use the fact that the trace operation is linear and so it can be interchanged with the integral. Therefore,
\[\lambda_{1}(T_{\mu})\geq\frac{1}{d}\int_{\mathbb{R}^{d}}\|x\|_{2}^{4}\,d\mu(x)=\frac{1}{d}\sum_{i=1}^{d(d+1)/2}\lambda_{i}(T_{\mu}).\]
This proves (\ref{firstmaineq1}).

To prove (\ref{firstmaineq2}), observe that for every $A\in\mathbb{R}_{\mathrm{sym}}^{d\times d}$, we have
\[\langle T_{\mu}(A),A\rangle=\int_{\mathbb{R}^{d}}\langle A,xx^{T}\rangle^{2}\,d\mu(x)\geq\left(\int_{\mathbb{R}^{d}}\langle A,xx^{T}\rangle\,d\mu(x)\right)^{2}=\langle A,B_{\mu}\rangle^{2}.\]
Hence, taking supremum over all $A\in\mathbb{R}_{\mathrm{sym}}^{d\times d}$ with $\|A\|_{\mathrm{F}}\leq 1$, we have $\|T_{\mu}\|_{\mathrm{op}}\geq\|B_{\mu}\|_{\mathrm{F}}^{2}$. This proves the first inequality in (\ref{firstmaineq2}).

To prove the last inequality in (\ref{firstmaineq2}), observe that since $T_{\mu}$ is positive semidefinite (see Remark \ref{tmupsd}), we have $\lambda_{i}(T_{\mu})^{2}=\lambda_{i}(T_{\mu}^{2})$ for each $1\leq i\leq\frac{d(d+1)}{2}$, and so
\begin{equation}\label{firstmainproofeq1}
\sum_{i=1}^{d(d+1)/2}\lambda_{i}(T_{\mu})^{2}=\sum_{i=1}^{d(d+1)/2}\lambda_{i}(T_{\mu}^{2})
\end{equation}
is equal to the trace of the operator $T_{\mu}^{2}$. Since $T_{\mu}=\int_{\mathbb{R}^{d}}(xx^{T})\otimes(xx^{T})\,d\mu(x)$ by (\ref{tmutensorform}), we have
\[T_{\mu}^{2}=\int_{\mathbb{R}^{d}}\int_{\mathbb{R}^{d}}\langle xx^{T},yy^{T}\rangle(xx^{T})\otimes(yy^{T})\,d\mu(x)\,d\mu(y),\]
so the trace of $T_{\mu}^{2}$ is equal to
\[\int_{\mathbb{R}^{d}}\int_{\mathbb{R}^{d}}\langle xx^{T},yy^{T}\rangle^{2}\,d\mu(x)\,d\mu(y)=\int_{\mathbb{R}^{d}}\int_{\mathbb{R}^{d}}\langle x,y\rangle^{4}\,d\mu(x)\,d\mu(y).\]
Thus, by (\ref{firstmainproofeq1}),
\begin{equation}\label{firstmainproofeq2}
\sum_{i=1}^{d(d+1)/2}\lambda_{i}(T_{\mu})^{2}=\int_{\mathbb{R}^{d}}\int_{\mathbb{R}^{d}}\langle x,y\rangle^{4}\,d\mu(x)\,d\mu(y).
\end{equation}
We now bound this quantity. By (\ref{l4l2}) and Lemma \ref{lem:l2norm_equality}, for each fixed $y\in\mathbb{R}^{d}$, we have
\[\int_{\mathbb{R}^{d}}\langle x,y\rangle^{4}\,d\mu(x)\leq\beta^{4}\left(\int_{\mathbb{R}^{d}}\langle x,y\rangle^{2}\,d\mu(x)\right)^{2}=\beta^{4}\|B_{\mu}^{1/2}y\|_{2}^{4},\]
and so
\begin{eqnarray*}
\int_{\mathbb{R}^{d}}\int_{\mathbb{R}^{d}}\langle x,y\rangle^{4}\,d\mu(x)\,d\mu(y)&\leq&\beta^{4}\int_{\mathbb{R}^{d}}\|B_{\mu}^{1/2}y\|_{2}^{4}\,d\mu(y)\\&\leq&
\beta^{4}\cdot\beta^{4}\|B_{\mu}^{1/2}B_{\mu}^{1/2}\|_{\mathrm{F}}^{4}=\beta^{8}\|B_{\mu}\|_{\mathrm{F}}^{4},
\end{eqnarray*}
where the second step follows from Lemma \ref{lpl2advance} and the assumption (\ref{l4l2}). Combining this with (\ref{firstmainproofeq2}), we obtain
\[\sum_{i=1}^{d(d+1)/2}\lambda_{i}(T_{\mu})^{2}\leq\beta^{8}\|B_{\mu}\|_{\mathrm{F}}^{4}.\]
This completes the proof of the last inequality in (\ref{firstmaineq2}).
\end{proof}
\section{Proof of the second main result}\label{2ndmainproofsection}
In this section, we prove the second main result Theorem \ref{thm:spectral_decomposition}. The proof has 4 parts. In the first part, we obtain an upper bound for $s(\mu)$ in Proposition \ref{usefulsmubound}. In the second part, we obtain a lower bound for $s(\mu)$ in Proposition \ref{secondmainlbexplicit}. In the third part, we prove an elementary linear algebra result Lemma \ref{lem:T-xtensorx_bound}. In the last part, we complete the proof of Theorem \ref{thm:spectral_decomposition} using the results obtained in the first 3 parts. As a byproduct of this proof, we obtain the following formula for $s(\mu)$ (see Remark \ref{smuformula}):
\[s(\mu)=\sup_{\|A\|_{\mathrm{F}}\leq 1}\inf_{b\in\mathbb{R}}\int_{\mathbb{R}^{d}}|\langle Ax,x\rangle-b|\,d\mu(x),\]
where the supremum is over all $A\in\mathbb{R}_{\mathrm{sym}}^{d\times d}$ with $\|A\|_{\mathrm{F}}\leq 1$.

Throughout this section, $I(\cdot)$ denotes the indicator function of the underlying event, i.e., its value is $1$ when the event occurs and its value is $0$ otherwise. For a real random variable $W$, a median of $W$ is a real number $b_{0}$ that satisfies  
\begin{align*}
\mu(W \geq b_0) &\geq \frac{1}{2} \\
\mu(W \leq b_0) &\geq \frac{1}{2}.
\end{align*}
By definition, if $M \in \mathbb{R}_{\mathrm{sym}}^{d\times d}$, then $M \otimes M$ is the rank one linear operator on $\mathbb{R}_{\mathrm{sym}}^{d\times d}$ defined by $A \mapsto \inner{A}{M}M$ for $A \in \mathbb{R}_{\mathrm{sym}}^{d \times d}$.

\subsection{Upper bound for $s(\mu)$}

\begin{lemma} \label{lem:Tmu-BtensorB}
Let $\mu\in\mathcal{P}_{4}(\mathbb{R}^{d})$. Then the operator $T_\mu - B_{\mu} \otimes B_{\mu}$ on $\mathbb{R}_{\mathrm{sym}}^{d\times d}$ is positive semidefinite and 
\[\langle(T_\mu - B_{\mu} \otimes B_{\mu})(A),A\rangle=\mathbb{E}|\inner{AX}{X} - \mathbb{E}\inner{AX}{X}|^2,\]
for all $A\in\mathbb{R}_{\mathrm{sym}}^{d\times d}$, where $X$ is a random vector in $\mathbb{R}^{d}$ distributed according to $\mu$.
\end{lemma}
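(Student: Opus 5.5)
The plan is to compute the quadratic form of $T_\mu - B_\mu\otimes B_\mu$ directly and recognize it as a variance. Positive semidefiniteness will then follow at once.

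First, $T_\mu - B_\mu\otimes B_\mu$ is self-adjoint on $\mathbb{R}_{\mathrm{sym}}^{d\times d}$. Indeed, $T_\mu$ is self-adjoint by Remark \ref{tmupsd}. The rank one operator $B_\mu\otimes B_\mu$, $A\mapsto\langle A,B_\mu\rangle B_\mu$, is self-adjoint because $\langle (B_\mu\otimes B_\mu)(A),C\rangle=\langle A,B_\mu\rangle\langle B_\mu,C\rangle$ is symmetric in $A$ and $C$. Hence positive semidefiniteness is equivalent to nonnegativity of the quadratic form, and it suffices to prove the displayed identity.

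Fix $A\in\mathbb{R}_{\mathrm{sym}}^{d\times d}$ and let $X\sim\mu$. By Definition \ref{def:T_mu} and the identity $\langle A,xx^T\rangle=\langle Ax,x\rangle$,
\[\langle T_\mu(A),A\rangle=\int_{\mathbb{R}^d}\langle A,xx^T\rangle^2\,d\mu(x)=\mathbb{E}\langle AX,X\rangle^2 .\]
This is finite because $|\langle AX,X\rangle|\le\|A\|_{\mathrm{op}}\|X\|_2^2$ and $\mu\in\mathcal{P}_4(\mathbb{R}^d)$. Similarly,
\[\langle (B_\mu\otimes B_\mu)(A),A\rangle=\langle A,B_\mu\rangle^2=\Bigl(\int_{\mathbb{R}^d}\langle A,xx^T\rangle\,d\mu(x)\Bigr)^2=\bigl(\mathbb{E}\langle AX,X\rangle\bigr)^2 ,\]
where linearity of the integral lets us pull $\langle A,\cdot\rangle$ inside the definition of $B_\mu$.

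Subtracting the two gives $\mathbb{E}W^2-(\mathbb{E}W)^2$ with $W=\langle AX,X\rangle\in L^2$. This equals $\mathbb{E}|W-\mathbb{E}W|^2\ge 0$, which is the claimed identity and also proves positive semidefiniteness. There is no real obstacle here; the only point needing care is the integrability of $W^2$, which is handled by the fourth moment assumption as above.
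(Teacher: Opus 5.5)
Your proposal is correct and follows essentially the same route as the paper: note that $T_\mu - B_\mu\otimes B_\mu$ is self-adjoint, then compute its quadratic form as $\mathbb{E}\langle AX,X\rangle^2-(\mathbb{E}\langle AX,X\rangle)^2=\mathbb{E}|\langle AX,X\rangle-\mathbb{E}\langle AX,X\rangle|^2\geq 0$. Your explicit check that $\langle AX,X\rangle\in L^2$ via $|\langle AX,X\rangle|\le\|A\|_{\mathrm{op}}\|X\|_2^2$ is a small detail the paper leaves implicit.
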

\begin{proof}
Since $T_\mu$ and $B_{\mu}\otimes B_{\mu}$ are self-adjoint operators on $\mathbb{R}_{\mathrm{sym}}^{d\times d}$, the operator $T_\mu-B_{\mu}\otimes B_{\mu}$ is also self-adjoint. For all $A \in \mathbb{R}_{\mathrm{sym}}^{d\times d}$, we have
\begin{align*}
\inner{(T_\mu - B_{\mu}\otimes B_{\mu})(A)}{A} &= \inner{T_\mu(A)}{A} - \inner{B_{\mu}}{A}^2 \\
&= \mathbb{E}|\inner{AX}{X}|^2 - |\mathbb{E}\inner{AX}{X}|^2 \\
&=\mathbb{E}|\inner{AX}{X} - \mathbb{E}\inner{AX}{X}|^2\geq 0,
\end{align*}
so $T_\mu - B_{\mu}\otimes B_{\mu}$ is positive semidefinite.
\end{proof}

\begin{prop}\label{usefulsmubound}
Let $\mu\in\mathcal{P}_{4}(\mathbb{R}^{d})$. Then
\[s(\mu)\leq\|T_{\mu}-B_{\mu}\otimes B_{\mu}\|_{\mathrm{op}}^{1/2}.\]
\end{prop}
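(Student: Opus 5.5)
Fix a decomposition $\mu=\frac{1}{2}\mu_{1}+\frac{1}{2}\mu_{2}$. It suffices to show
\[\tfrac{1}{2}\|B_{\mu_{1}}-B_{\mu_{2}}\|_{\mathrm{F}}\leq\|T_{\mu}-B_{\mu}\otimes B_{\mu}\|_{\mathrm{op}}^{1/2},\]
and then take the supremum. I will pass to a single density. Since $\mu_{1}\leq 2\mu$, the measure $\mu_{1}$ is absolutely continuous with respect to $\mu$. Write $d\mu_{1}=f\,d\mu$ with $0\leq f\leq 2$. Then $d\mu_{2}=(2-f)\,d\mu$. Set $g=f-1$, so that $|g|\leq 1$ and $\int g\,d\mu=0$. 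This gives
\[B_{\mu_{1}}-B_{\mu_{2}}=\int 2g(x)\,xx^{T}\,d\mu(x),\qquad\text{hence}\qquad\tfrac{1}{2}(B_{\mu_{1}}-B_{\mu_{2}})=\int g\,xx^{T}\,d\mu.\]

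Next I will dualize the Frobenius norm. Choose $A\in\mathbb{R}_{\mathrm{sym}}^{d\times d}$ with $\|A\|_{\mathrm{F}}\leq 1$ attaining
\[\Bigl\|\int g\,xx^{T}d\mu\Bigr\|_{\mathrm{F}}=\Bigl\langle A,\int g\,xx^{T}d\mu\Bigr\rangle .\]
The right side equals $\mathbb{E}[g(X)\langle AX,X\rangle]$, where $X\sim\mu$. Because $\mathbb{E}g(X)=0$, I can subtract the mean and write it as $\mathbb{E}[g(X)(\langle AX,X\rangle-\mathbb{E}\langle AX,X\rangle)]$. By Cauchy--Schwarz together with $|g|\leq 1$, this is at most
\[\bigl(\mathbb{E}g^{2}\bigr)^{1/2}\bigl(\mathbb{E}|\langle AX,X\rangle-\mathbb{E}\langle AX,X\rangle|^{2}\bigr)^{1/2}\leq\bigl(\mathbb{E}|\langle AX,X\rangle-\mathbb{E}\langle AX,X\rangle|^{2}\bigr)^{1/2}.\]

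Finally, Lemma \ref{lem:Tmu-BtensorB} identifies the last expectation as $\langle(T_{\mu}-B_{\mu}\otimes B_{\mu})(A),A\rangle$. Since that operator is positive semidefinite and $\|A\|_{\mathrm{F}}\leq 1$, this quantity is at most $\|T_{\mu}-B_{\mu}\otimes B_{\mu}\|_{\mathrm{op}}$. Taking square roots and then the supremum over decompositions gives the proposition.

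The only step that needs care is the Radon--Nikodym step, namely justifying $\mu_{1}\ll\mu$ with density bounded by $2$. This follows from $\mu_{1}(E)\leq 2\mu(E)$ for every Borel set $E$, and everything after it is routine. One could even skip Cauchy--Schwarz and use $|g|\leq 1$ to bound by the $L^{1}$ deviation, which is what would produce the sup–inf formula for $s(\mu)$ mentioned at the start of this section; the $L^{2}$ bound via Cauchy--Schwarz is all the proposition needs.
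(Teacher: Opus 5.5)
Your proof is correct and follows essentially the paper's route: test the Frobenius norm against a symmetric $A$ with $\|A\|_{\mathrm{F}}\leq 1$, subtract a constant (legitimate because $\mu_1$ and $\mu_2$ have equal total mass), and bound by the standard deviation of $\langle AX,X\rangle$, which Lemma~\ref{lem:Tmu-BtensorB} identifies with the quadratic form of $T_{\mu}-B_{\mu}\otimes B_{\mu}$. The only difference is cosmetic: the paper avoids the Radon--Nikodym density by applying the triangle inequality to $\mu_1$ and $\mu_2$ separately, which gives the $L^1$ bound $\inf_{b}\mathbb{E}|\langle AX,X\rangle-b|$ before passing to $L^2$, whereas you go directly to $L^2$ via Cauchy--Schwarz with $|g|\leq 1$.
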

\begin{proof}
Fix probability measures $\mu_{1},\mu_{2}$ on $\mathbb{R}^{d}$ such that $\mu=\frac{1}{2}\mu_{1}+\frac{1}{2}\mu_{2}$. For all $A\in\mathbb{R}_{\mathrm{sym}}^{d\times d}$ and $b\in\mathbb{R}$, we have
\begin{align*}
&\frac{1}{2}\left|\int_{\mathbb{R}^{d}}\langle A,xx^{T}\rangle\,d\mu_{1}(x)-\int_{\mathbb{R}^{d}}\langle A,xx^{T}\rangle\,d\mu_{2}(x)\right|\\
=&\frac{1}{2}\left|\int_{\mathbb{R}^{d}}\langle A,xx^{T}\rangle-b\,d\mu_{1}(x)-\int_{\mathbb{R}^{d}}\langle A,xx^{T}\rangle-b\,d\mu_{2}(x)\right|\\
\leq&\frac{1}{2}\left(\int_{\mathbb{R}^{d}}|\langle A,xx^{T}\rangle-b|\,d\mu_{1}(x)+\int_{\mathbb{R}^{d}}|\langle A,xx^{T}\rangle-b|\,d\mu_{2}(x)\right)\\
=&\int_{\mathbb{R}^{d}}|\langle A,xx^{T}\rangle-b|\,d\mu(x)=\mathbb{E}|\langle AX,X\rangle-b|,
\end{align*}
where $X$ is a random vector in $\mathbb{R}^{d}$ distributed according to $\mu$. Thus, taking infimum over all $b\in\mathbb{R}$ and then taking supremum over all $\mu_{1},\mu_{2}$ such that $\mu=\frac{1}{2}\mu_{1}+\frac{1}{2}\mu_{2}$, we obtain
\[\frac{1}{2}\cdot\sup_{\mu=\frac{1}{2}\mu_{1}+\frac{1}{2}\mu_{2}}\left|\int_{\mathbb{R}^{d}}\langle A,xx^{T}\rangle\,d\mu_{1}(x)-\int_{\mathbb{R}^{d}}\langle A,xx^{T}\rangle\,d\mu_{2}(x)\right|\leq\inf_{b\in\mathbb{R}}\mathbb{E}|\langle AX,X\rangle-b|.\]
Now, taking supremum over all $A\in\mathbb{R}_{\mathrm{sym}}^{d\times d}$ with $\|A\|_{\mathrm{F}}\leq 1$, we get
\begin{equation}\label{usefulsmuboundproofeq1}
\frac{1}{2}\cdot\sup_{\mu=\frac{1}{2}\mu_{1}+\frac{1}{2}\mu_{2}}\sup_{\|A\|_{\mathrm{F}}\leq1}\left|\int_{\mathbb{R}^{d}}\langle A,xx^{T}\rangle\,d\mu_{1}(x)-\int_{\mathbb{R}^{d}}\langle A,xx^{T}\rangle\,d\mu_{2}(x)\right|\leq
\sup_{\|A\|_{\mathrm{F}}\leq 1}\inf_{b\in\mathbb{R}}\mathbb{E}|\langle AX,X\rangle-b|.
\end{equation}
Since the left hand side is equal to $\displaystyle s(\mu)=\frac{1}{2}\sup_{\mu=\frac{1}{2}\mu_{1}+\frac{1}{2}\mu_{2}}\left\|\int_{\mathbb{R}^{d}}xx^{T}\,d\mu_{1}(x)-\int_{\mathbb{R}^{d}}xx^{T}\,d\mu_{2}(x)\right\|_{\mathrm{F}}$, it follows that
\begin{eqnarray*}
s(\mu)&\leq&\sup_{\|A\|_{\mathrm{F}}\leq 1}\mathbb{E}|\langle AX,X\rangle-\mathbb{E}\langle AX,X\rangle|\\&\leq&
\sup_{\|A\|_{\mathrm{F}}\leq 1}\left(\mathbb{E}|\langle AX,X\rangle-\mathbb{E}\langle AX,X\rangle|^{2}\right)^{1/2}=\|T_{\mu}-B_{\mu}\otimes B_{\mu}\|_{\mathrm{op}}^{1/2},
\end{eqnarray*}
where the last inequality follows from Lemma \ref{lem:Tmu-BtensorB}.
\end{proof}

\subsection{Lower bound for $s(\mu)$}
\begin{lemma} \label{lem:median_optimality}
Let $W$ be a real random variable and let $b_0$ be a median of $W$. Then the following hold:
\begin{itemize}
\item[(i)] $ \displaystyle \mathbb{E}|W - b_0| = \inf_{b \in \mathbb{R}} \mathbb{E}|W-b|$,

\item[(ii)] $ \displaystyle \mathbb{E}|W-\mathbb{E}W|^2 = \inf_{b \in \mathbb{R}}\mathbb{E}|W - b|^2$.
\end{itemize}
\end{lemma}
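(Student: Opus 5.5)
To prove (i), we will compare $\mathbb{E}|W-b|$ with $\mathbb{E}|W-b_0|$ pointwise for an arbitrary $b\in\mathbb{R}$. We may assume $\mathbb{E}|W|<\infty$; otherwise both sides of (i) are $+\infty$, since $|W-b|\geq|W|-|b|$. By symmetry (replace $W$ by $-W$, $b$ by $-b$ and $b_0$ by $-b_0$, which is again a median), it suffices to treat $b>b_0$.

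In that case we use the elementary pointwise bound
\[|W-b|-|W-b_0|\geq (b-b_0)\bigl(I(W\leq b_0)-I(W>b_0)\bigr).\]
To check it, consider two regions. If $W\leq b_0$, the difference equals exactly $b-b_0$. If $W>b_0$, the map $t\mapsto|W-t|$ is $1$-Lipschitz, so the difference is at least $-(b-b_0)$.

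Taking expectations gives
\[\mathbb{E}|W-b|-\mathbb{E}|W-b_0|\geq (b-b_0)\bigl(\mu(W\leq b_0)-\mu(W>b_0)\bigr)=(b-b_0)\bigl(2\mu(W\leq b_0)-1\bigr).\]
This is nonnegative, because $\mu(W\leq b_0)\geq\frac12$ by the definition of a median. Hence $\mathbb{E}|W-b_0|\leq\mathbb{E}|W-b|$ for every $b$, and since $b_0$ is itself admissible, the infimum is attained at $b_0$.

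For (ii), we may assume $\mathbb{E}W^2<\infty$; otherwise both sides are $+\infty$. Then we expand
\[\mathbb{E}|W-b|^2=\mathbb{E}|W-\mathbb{E}W|^2+(\mathbb{E}W-b)^2,\]
where the cross term vanishes because $\mathbb{E}(W-\mathbb{E}W)=0$. The right-hand side is minimized exactly at $b=\mathbb{E}W$, which gives (ii).

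The only mildly delicate step is the case analysis behind the pointwise inequality in (i), together with the correct use of the median condition on the appropriate side. Everything else is routine.
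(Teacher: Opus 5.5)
Your proof is correct and complete. The paper states this lemma without proof, treating both parts as standard, so there is no argument of the paper's to compare against.

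Your pointwise inequality $|W-b|-|W-b_0|\geq (b-b_0)\bigl(I(W\leq b_0)-I(W>b_0)\bigr)$ for $b>b_0$, together with the reflection $W\mapsto -W$, does the job. It uses exactly the two conditions $\mu(W\leq b_0)\geq\frac12$ and $\mu(W\geq b_0)\geq\frac12$ in the paper's definition of a median. It correctly handles an atom at $b_0$ and non-unique medians, and it shows that the infimum in (i) is attained at $b_0$. It is a one-step version of the identity $\mathbb{E}|W-b|-\mathbb{E}|W-b_0|=\int_{b_0}^{b}\bigl(2\mu(W\leq t)-1\bigr)\,dt$, whose integrand is nonnegative for $t\geq b_0$.

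One minor caveat in (ii): if $\mathbb{E}|W|=\infty$, then $\mathbb{E}W$ need not be a real number, so the left-hand side is undefined rather than $+\infty$. Part (ii) tacitly assumes $\mathbb{E}|W|<\infty$, and under that assumption your reduction to the case $\mathbb{E}W^2<\infty$ is fine. In the paper's applications the relevant moments are finite anyway.
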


\begin{lemma} \label{lem:s_F}
Suppose that $\mu$ is a probability measure on a set $\Omega$ and $X$ is a random point in $\Omega$ distributed according to $\mu$. Let $f:\Omega\to\mathbb{R}$ be a function such that $\int_{\Omega}|f(x)|\,d\mu(x)<\infty$. Define the probability measures $\mu_{1},\mu_{2}$ on $\Omega$ by the following densities:
\begin{align}\label{explicitdecomposition}
d\mu_1(x) &=\left(2I(f(x) > b_0) + 2\alpha I(f(x) = b_0)\right)\,d\mu(x) \\
d\mu_2(x) &=\left(2I(f(x) < b_0) + 2(1-\alpha) I(f(x) = b_0)\right)\,d\mu(x),\nonumber
\end{align}
where $b_{0}\in\mathbb{R}$ is a median of the random variable $f(X)$, and $0\leq\alpha\leq 1$ is chosen so that $\mu_{1}(\Omega)=\mu_{2}(\Omega)=1$. Then $\frac{1}{2}\mu_{1}+\frac{1}{2}\mu_{2}=\mu$ and
\[
\frac{1}{2}\left(\int_{\Omega}f(x)\,d\mu_{1}(x)-\int_{\Omega}f(x)\,d\mu_{2}(x)\right) = \inf_{b\in \mathbb{R}} \int_{\Omega} |f(x) - b|\, d\mu(x).
\]
\end{lemma}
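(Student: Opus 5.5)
The argument is a direct verification in three steps: that $\alpha$ exists, that the two densities average to the density of $\mu$, and that the difference of means of $f$ collapses to the expected absolute deviation from the median, which Lemma \ref{lem:median_optimality}(i) identifies with the infimum over $b$.

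\textbf{Existence of $\alpha$.} Write $p_{>}=\mu(f(X)>b_0)$, $p_{<}=\mu(f(X)<b_0)$ and $p_{=}=\mu(f(X)=b_0)$. Since $b_0$ is a median, $p_{>}+p_{=}\geq\frac12$ and $p_{<}+p_{=}\geq\frac12$. Together with $p_{>}+p_{<}+p_{=}=1$, this gives $p_{>}\leq\frac12$ and $p_{<}\leq\frac12$. The condition $\mu_1(\Omega)=1$ reads $2p_{>}+2\alpha p_{=}=1$.
\begin{itemize}
\item If $p_{=}>0$, take $\alpha=(\frac12-p_{>})/p_{=}$. This lies in $[0,1]$ precisely by the two median inequalities.
\item If $p_{=}=0$, then $p_{>}=p_{<}=\frac12$ and any $\alpha$ works.
\end{itemize}
In either case $\mu_2(\Omega)=2p_{<}+2(1-\alpha)p_{=}=2(1-p_{>}-\alpha p_{=})=1$ automatically. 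Both densities are nonnegative, so $\mu_1$ and $\mu_2$ are probability measures. Adding the two densities and halving gives $I(f>b_0)+I(f<b_0)+I(f=b_0)=1$, so $\frac12\mu_1+\frac12\mu_2=\mu$.

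\textbf{The identity.} First,
\[\frac12\left(\int f\,d\mu_1-\int f\,d\mu_2\right)=\int f(x)\big(I(f>b_0)-I(f<b_0)+(2\alpha-1)I(f=b_0)\big)\,d\mu(x).\]
Since $\int d\mu_1=\int d\mu_2$, the same expression with $f$ replaced by $f-b_0$ is unchanged. On $\{f=b_0\}$ the integrand $f-b_0$ vanishes, so the $\alpha$-term drops out. What remains is
\[\int (f-b_0)\big(I(f>b_0)-I(f<b_0)\big)\,d\mu=\int|f-b_0|\,d\mu.\]
By Lemma \ref{lem:median_optimality}(i) applied to $W=f(X)$, this equals $\inf_{b}\int|f-b|\,d\mu$. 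Integrability of $f$ makes every integral finite, which justifies the manipulations.

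\textbf{Main obstacle.} Nothing here is deep; the only delicate point is the bookkeeping for the atom of $f(X)$ at the median, i.e. choosing $\alpha$ correctly. The key observation is that the $\alpha$-dependent contribution is multiplied by $f-b_0=0$ and therefore vanishes.
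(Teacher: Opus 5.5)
Your proof is correct and follows essentially the same route as the paper. You get $\alpha$ from the two median inequalities, average the densities, shift $f$ by $b_0$ (allowed since $\mu_1(\Omega)=\mu_2(\Omega)$) so the atom at $b_0$ contributes nothing, and then invoke Lemma~\ref{lem:median_optimality}(i). Your explicit choice $\alpha=(\frac12-p_{>})/p_{=}$ just fills in a detail the paper only asserts.
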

\begin{proof}
We begin by noting that there always exist an $0\leq\alpha\leq 1$ such that $\mu_{1}(\Omega)=\mu_{2}(\Omega)=1$, since $b_0$ is a median of $f(X)$, so $\mu(\{x\in \Omega|\,f(x) \geq b_0\}) \geq \frac{1}{2}$ and $\mu(\{x\in \Omega|\,f(x) \leq b_0\}) \geq \frac{1}{2}$.

Since
\[\frac{1}{2}\left(2I(f(x) > b_0) + 2\alpha I(f(x) = b_0)\right)+\frac{1}{2}\left(2I(f(x) < b_0) + 2(1-\alpha) I(f(x) = b_0)\right)=1,\]
we have $\frac{1}{2}\mu_{1}+\frac{1}{2}\mu_{2}=\mu$. Next,
\begin{align*}
 &\frac{1}{2} \left(\int_{\Omega} f(x)\,d\mu_{1}(x)-\int_\Omega f(x)\,d\mu_{2}(x)\right) \\
=& \,\frac{1}{2} \left(\int_{\Omega} (f(x)-b_0)\,d\mu_{1}(x)-\int_\Omega (f(x)-b_0)\,d\mu_{2}(x)\right) \\
=& \,\frac{1}{2} \left(\int_{\Omega} 2(f(x)-b_0)I(f(x) > b_0) \,d\mu(x)-\int_\Omega 2(f(x)-b_0)I(f(x) < b_0)\,d\mu(x)\right) \\
= &\,\int_\Omega |f(x) - b_0|\,d\mu(x) \\
= &\,\inf_{b\in \mathbb{R}} \int_{\Omega} |f(x) - b|\, d\mu(x),
\end{align*}
where the last equality follows from Lemma \ref{lem:median_optimality}(i).
\end{proof}


\begin{lemma} \label{lem:mean_lower_bound}
Let $U$ be a real random variable with $U \geq 0$ then 
\[\mathbb{E}{U}\geq\frac{(\mathbb{E}U^2)^{3/2}}{(\mathbb{E}U^4)^{1/2}}. \]
\end{lemma}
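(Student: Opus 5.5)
The plan is to apply Hölder's inequality (equivalently, Cauchy--Schwarz) to interpolate the second moment between the first and fourth moments. Write $U^{2}=U^{1/2}\cdot U^{3/2}$. Then Cauchy--Schwarz gives
\[
\mathbb{E}U^{2}=\mathbb{E}\bigl[U^{1/2}U^{3/2}\bigr]\leq(\mathbb{E}U)^{1/2}(\mathbb{E}U^{3})^{1/2}.
\]
This still leaves a third moment, so instead I will choose the exponents to match the target directly. I want $\mathbb{E}U^{2}\leq(\mathbb{E}U)^{a}(\mathbb{E}U^{4})^{b}$ with $a+b=1$ and $a+4b=2$. Solving gives $b=1/3$ and $a=2/3$.

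Concretely, write $U^{2}=U^{2/3}\cdot U^{4/3}$. Apply Hölder with conjugate exponents $p=3/2$ and $q=3$:
\[
\mathbb{E}U^{2}\leq\bigl(\mathbb{E}U^{(2/3)(3/2)}\bigr)^{2/3}\bigl(\mathbb{E}U^{(4/3)\cdot 3}\bigr)^{1/3}=(\mathbb{E}U)^{2/3}(\mathbb{E}U^{4})^{1/3}.
\]
Raising both sides to the power $3/2$ yields $(\mathbb{E}U^{2})^{3/2}\leq\mathbb{E}U\,(\mathbb{E}U^{4})^{1/2}$. Dividing by $(\mathbb{E}U^{4})^{1/2}$ gives the claimed inequality.

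Nonnegativity of $U$ is what lets us take fractional powers, so that $U^{2/3}$ and $U^{4/3}$ are well defined.

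There is no real obstacle; the only remaining step is handling the degenerate cases.
\begin{itemize}
\item If $\mathbb{E}U^{4}=\infty$, the right-hand side is interpreted as $0$, and the inequality holds trivially.
\item If $\mathbb{E}U^{4}=0$, then $U=0$ almost surely, so $\mathbb{E}U^{2}=0$. The inequality should then be read in multiplied-out form, $(\mathbb{E}U^{2})^{3/2}\leq\mathbb{E}U\,(\mathbb{E}U^{4})^{1/2}$, which holds with both sides zero.
\item If $\mathbb{E}U^{4}<\infty$, then $\mathbb{E}U$ and $\mathbb{E}U^{2}$ are finite by Lyapunov's inequality, so the Hölder step is legitimate.
\end{itemize}
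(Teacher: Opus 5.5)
Your proposal is correct and is essentially the paper's proof: the paper applies H\"older with exponents $3/2$ and $3$ to $\mathbb{E}U^{2}=\mathbb{E}(U^{2/3}U^{4/3})\leq(\mathbb{E}U)^{2/3}(\mathbb{E}U^{4})^{1/3}$ and rearranges. Your discussion of the degenerate cases $\mathbb{E}U^{4}\in\{0,\infty\}$ is a harmless extra that the paper omits.
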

\begin{proof}
By Holder's inequality,
\[ \mathbb{E}{U^2} = \mathbb{E}(U^{2/3}U^{4/3}) \leq (\mathbb{E}U)^{2/3}(\mathbb{E}U^4)^{1/3}. \]
\end{proof}

\begin{lemma} \label{lem:inf_of_mean_bounds}
Let $W$ be a real random variable. Then 
\[ 
\inf_{b \in \mathbb{R}} \mathbb{E}|W-b|\geq\frac{(\mathbb{E}|W-\mathbb{E}W|^2)^{3/2}}{5(\mathbb{E}W^4)^{1/2}}.
\]
\end{lemma}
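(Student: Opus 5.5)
The idea is to evaluate the infimum at a median, apply Lemma \ref{lem:mean_lower_bound} to the absolute deviation from that median, and then control the second and fourth moments of that deviation by $\mathbb{E}|W-\mathbb{E}W|^{2}$ and $\mathbb{E}W^{4}$.

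First we dispose of the degenerate cases. If $\mathbb{E}W^{4}=\infty$, the right-hand side is $0$ (reading $x/\infty$ as $0$), so there is nothing to prove. If $\mathbb{E}W^{4}=0$, then $W=0$ almost surely, and both sides vanish with the convention $0/0=0$. So assume $0<\mathbb{E}W^{4}<\infty$.

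Let $b_{0}$ be a median of $W$. By Lemma \ref{lem:median_optimality}(i), $\inf_{b}\mathbb{E}|W-b|=\mathbb{E}|W-b_{0}|$. Set $U=|W-b_{0}|\geq 0$. Lemma \ref{lem:mean_lower_bound} gives
\[\mathbb{E}U\geq\frac{(\mathbb{E}U^{2})^{3/2}}{(\mathbb{E}U^{4})^{1/2}}.\]
For the numerator, Lemma \ref{lem:median_optimality}(ii) gives $\mathbb{E}U^{2}=\mathbb{E}|W-b_{0}|^{2}\geq\inf_{b}\mathbb{E}|W-b|^{2}=\mathbb{E}|W-\mathbb{E}W|^{2}$.

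For the denominator, we use $(s+t)^{4}\leq 8(s^{4}+t^{4})$ to get $\mathbb{E}U^{4}\leq 8(\mathbb{E}W^{4}+b_{0}^{4})$. It remains to control $b_{0}^{4}$, which is the only step that needs thought.

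Since $b_{0}$ is a median, either $\mu(W\geq b_{0})\geq\frac12$ or $\mu(W\leq b_{0})\geq\frac12$ gives $\mu(|W|\geq|b_{0}|)\geq\frac12$, according to the sign of $b_{0}$. Markov's inequality applied to $W^{4}$ then yields
\[\tfrac12 b_{0}^{4}\leq\mathbb{E}W^{4},\qquad\text{that is,}\qquad b_{0}^{4}\leq 2\,\mathbb{E}W^{4}.\]
Hence $\mathbb{E}U^{4}\leq 24\,\mathbb{E}W^{4}$, and $(\mathbb{E}U^{4})^{1/2}\leq\sqrt{24}\,(\mathbb{E}W^{4})^{1/2}\leq 5(\mathbb{E}W^{4})^{1/2}$.

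Combining the bounds on the numerator and denominator gives
\[\inf_{b}\mathbb{E}|W-b|=\mathbb{E}U\geq\frac{(\mathbb{E}|W-\mathbb{E}W|^{2})^{3/2}}{5(\mathbb{E}W^{4})^{1/2}}.\]
The main obstacle is the step bounding the median's fourth power by $2\,\mathbb{E}W^{4}$. It is handled by Markov's inequality, and the resulting constant $\sqrt{24}$ fits within the stated constant $5$.
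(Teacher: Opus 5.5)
Your proof is correct and follows the paper's argument. You evaluate at a median $b_0$, apply Lemma~\ref{lem:mean_lower_bound} to $|W-b_0|$, use Lemma~\ref{lem:median_optimality}(ii) for the numerator, and bound the median through $\tfrac12 b_0^4\le\mathbb{E}W^4$ for the denominator. The differences are cosmetic: you bound $\mathbb{E}(|W|+|b_0|)^4$ by the convexity estimate $8(\mathbb{E}W^4+b_0^4)$, giving the constant $\sqrt{24}$ where the paper uses Minkowski to get $(1+2^{1/4})^2\approx 4.79$. Also, working with $|b_0|$ handles a negative median more carefully than the paper's write-up, which implicitly treats $b_0\ge 0$.
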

\begin{proof}
Let $b_0$ be a median of $W$. Then 
\begin{equation} \label{eq:full_fraction}
\inf_{b \in \mathbb{R}} \mathbb{E}|W-b| = \mathbb{E}|W - b_0| \geq \frac{(\mathbb{E}|W - b_0|^2)^{3/2}}{(\mathbb{E}|W - b_0|^4)^{1/2}},
\end{equation}
where the equality follows from Lemma \ref{lem:median_optimality}(i) and the inequality follows from Lemma \ref{lem:mean_lower_bound}. 
By Lemma \ref{lem:median_optimality}(ii), we have
\begin{equation} \label{eq:numerator_upper_bound}
(\mathbb{E}|W-\mathbb{E}W|^2)^{3/2} \leq (\mathbb{E}|W-b_0|^2)^{3/2}.
\end{equation} 
To get an upper bound on the denominator $(\mathbb{E}|W - b_0|^4)^{1/2}$ in (\ref{eq:full_fraction}), notice that $b_0 \leq 2^{1/4}(\mathbb{E}W^4)^{1/4}$
since 
\[ \mathbb{E}W^4 \geq \mathbb{E}b_0^4I(W\geq b_0) = b_0^4\cdot\mathbb{P}(W\geq b_0)\geq \tfrac{1}{2}b_0^4
\] 
where $I(\cdot)$ is the indicator random variable for the underlying event. Hence,
\begin{align*}
(\mathbb{E}|W-b_0|^4)^{1/4} 
&\leq (\mathbb{E}(|W|+b_0)^4)^{1/4} \\
&\leq (\mathbb{E}W^4)^{1/4} + b_0 \\
&\leq (1+2^{1/4})(\mathbb{E}W^4)^{1/4},
\end{align*}
where the second inequality follows from Minkowski's inequality. Thus we have
\begin{equation}\label{eq:denominator_upper_bound}
(\mathbb{E}|W - b_0|^4)^{1/2} \leq (1+2^{1/4})^2(\mathbb{E}W^4)^{1/2} \leq 5(\mathbb{E}W^4)^{1/2}.
\end{equation}
Applying (\ref{eq:numerator_upper_bound}) to the numerator and  (\ref{eq:denominator_upper_bound}) to the denominator in (\ref{eq:full_fraction}) completes the proof.
\end{proof}

\begin{lemma}\label{lem:B_lower_bound}
Let $\mu\in\mathcal{P}_{4}(\mathbb{R}^{d})$. Suppose that there exists $\beta\geq 1$ satisfying (\ref{l8l2}) for all $v\in\mathbb{R}^{d}$. Then 
\[
\left(\int_{\mathbb{R}^d}\langle Ax,x\rangle^{4}\,d\mu(x)\right)^{1/4} \leq \beta^2\|B_{\mu}\|_{\mathrm{F}}
\]
for all $A\in\mathbb{R}_{\mathrm{sym}}^{d\times d}$ with $\|A\|_\mathrm{F} \leq 1$.
\end{lemma}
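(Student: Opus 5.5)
Fix $A\in\mathbb{R}_{\mathrm{sym}}^{d\times d}$ with $\|A\|_{\mathrm{F}}\leq 1$ and diagonalize it as $A=\sum_{i=1}^{d}a_{i}u_{i}u_{i}^{T}$, where $\{u_{i}\}$ is an orthonormal basis and $\sum_{i}a_{i}^{2}\leq 1$. The plan is to write $\langle Ax,x\rangle=\sum_{i}a_{i}\langle x,u_{i}\rangle^{2}$. We then treat $x\mapsto\langle x,u_{i}\rangle^{2}$ as functions in $L^{4}(\mu)$ and apply Minkowski's inequality in $L^{4}(\mu)$:
\[\left(\int_{\mathbb{R}^d}\langle Ax,x\rangle^{4}\,d\mu(x)\right)^{1/4}\leq\sum_{i=1}^{d}|a_{i}|\left(\int_{\mathbb{R}^d}\langle x,u_{i}\rangle^{8}\,d\mu(x)\right)^{1/4}.\]
By (\ref{l8l2}) and Lemma \ref{lem:l2norm_equality}, each factor satisfies $\left(\int\langle x,u_{i}\rangle^{8}\,d\mu\right)^{1/4}\leq\beta^{2}\int\langle x,u_{i}\rangle^{2}\,d\mu=\beta^{2}\langle B_{\mu}u_{i},u_{i}\rangle$.

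It remains to bound $\sum_{i}|a_{i}|\langle B_{\mu}u_{i},u_{i}\rangle$. By Cauchy--Schwarz, this is at most $\left(\sum_{i}a_{i}^{2}\right)^{1/2}\left(\sum_{i}\langle B_{\mu}u_{i},u_{i}\rangle^{2}\right)^{1/2}$. The second factor is the Frobenius norm of the diagonal part of $B_{\mu}$ in the basis $\{u_{i}\}$, so it is at most $\|B_{\mu}\|_{\mathrm{F}}$. Combining these bounds gives $\left(\int\langle Ax,x\rangle^{4}\,d\mu\right)^{1/4}\leq\beta^{2}\|B_{\mu}\|_{\mathrm{F}}$, as claimed.

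The only point needing care is the choice of decomposition. Applying Minkowski directly to $\langle Ax,x\rangle=\sum_{j}\langle x,Ae_{j}\rangle\langle x,e_{j}\rangle$ would introduce cross terms and lose the Frobenius-norm structure. Diagonalizing $A$ first avoids this: every summand becomes the square of a single linear functional, which is exactly the quantity that the $L^{8}$-$L^{2}$ hypothesis controls. A minor technical remark is that (\ref{l8l2}) implicitly requires $\mu\in\mathcal{P}_{8}$, so the eighth moments used above are finite.
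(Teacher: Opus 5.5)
Your proof is correct and is essentially the paper's argument. The paper splits $A=A_{+}-A_{-}$ and applies Minkowski to the two pieces. It then uses Lemma \ref{lpl2advance} with $p=8$, which is itself a Minkowski step over a basis, and finishes with $\mathrm{Tr}(B_{\mu}(A_{+}+A_{-}))\leq\|B_{\mu}\|_{\mathrm{F}}\|A\|_{\mathrm{F}}$. You carry out the same computation in one pass in the eigenbasis of $A$, where $\sum_{i}|a_{i}|\langle B_{\mu}u_{i},u_{i}\rangle$ is exactly $\mathrm{Tr}(B_{\mu}(A_{+}+A_{-}))$, and your coordinate Cauchy--Schwarz is the paper's Frobenius Cauchy--Schwarz.
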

\begin{proof}
Since $A$ is symmetric, we can use its spectral decomposition to write $A=A_{+}-A_{-}$ so that $A_{+}$ and $A_{-}$ are positive semidefinite and $\|A_{+}+A_{-}\|_{\mathrm{F}}=\|A\|_{\mathrm{F}}$. We have
\begin{eqnarray*}
\left(\int_{\mathbb{R}^d}\langle Ax,x\rangle^{4}\,d\mu(x)\right)^{1/4}&=&
\left(\int_{\mathbb{R}^d}(\langle A_{+}x,x\rangle-\langle A_{-}x,x\rangle)^{4}\,d\mu(x)\right)^{1/4}\\&\leq&
\left(\int_{\mathbb{R}^d}\langle A_{+}x,x\rangle^{4}\,d\mu(x)\right)^{1/4}+\left(\int_{\mathbb{R}^d}\langle A_{-}x,x\rangle^{4}\,d\mu(x)\right)^{1/4}\\&=&
\left(\int_{\mathbb{R}^d}\|A_{+}^{1/2}x\|_{2}^{8}\,d\mu(x)\right)^{1/4}+\left(\int_{\mathbb{R}^d}\|A_{-}^{1/2}x\|_{2}^{8}\,d\mu(x)\right)^{1/4}\\&\leq&
\beta^{2}\|B_{\mu}^{1/2}A_{+}^{1/2}\|_{\mathrm{F}}^{2}+\beta^{2}\|B_{\mu}^{1/2}A_{-}^{1/2}\|_{\mathrm{F}}^{2}\\&=&
\beta^{2}(\mathrm{Tr}(B_{\mu}A_{+})+\mathrm{Tr}(B_{\mu}A_{-}))\\&=&
\beta^{2}\mathrm{Tr}(B_{\mu}(A_{+}+A_{-}))\\&\leq&
\beta^{2}\|B_{\mu}\|_{\mathrm{F}}\|A_{+}+A_{-}\|_{F}\\&=&
\beta^{2}\|B_{\mu}\|_{F}\|A\|_{F},
\end{eqnarray*}
where the second step follows from Minkowski's inequality and the fourth step follows from Lemma \ref{lpl2advance} and the assumption (\ref{l8l2}).
\end{proof}

Recall from Lemma \ref{lem:Tmu-BtensorB} that the operator $T_{\mu}-B_{\mu}\otimes B_{\mu}$ on $\mathbb{R}_{\mathrm{sym}}^{d\times d}$ is always positive semidefinite.

\begin{prop}\label{secondmainlbexplicit}
Suppose that $\mu\in\mathcal{P}_{8}(\mathbb{R}^{d})$, $\mu\neq\delta_{0}$ and $\beta\geq 1$ satisfies (\ref{l8l2}) for all $v\in\mathbb{R}^{d}$. Let $A\in\mathbb{R}_{\mathrm{sym}}^{d\times d}$ be the normalized leading eigenvector of $T_{\mu}-B_{\mu}\otimes B_{\mu}$, i.e., $\|A\|_{\mathrm{F}}=1$ and $(T_{\mu}-B_{\mu}\otimes B_{\mu})(A)=[\lambda_{1}(T_{\mu}-B_{\mu}\otimes B_{\mu})]A$. Define the probability measures $\mu_{1}$ and $\mu_{2}$ on $\mathbb{R}^{d}$ as follows:
\begin{align*}
d\mu_1(x) &= 2I(\langle Ax,x\rangle > b_0)\,d\mu(x) + 2\alpha I(\langle Ax,x\rangle = b_0)\,d\mu(x),\\
d\mu_2(x) &= 2I(\langle Ax,x\rangle < b_0)\,d\mu(x) + 2(1-\alpha) I(\langle Ax,x\rangle = b_0)\,d\mu(x),\nonumber
\end{align*}
where $0\leq\alpha\leq 1$ is chosen so that $\mu_{1}(\mathbb{R}^{d})=\mu_{2}(\mathbb{R}^{d})=1$, and $b_{0}$ is a median of the random variable $\langle AX,X\rangle$, where $X$ is a random vector in $\mathbb{R}^{d}$ distributed according to $\mu$. Then $\mu=\frac{1}{2}\mu_{1}+\frac{1}{2}\mu_{2}$ and
\[s(\mu)\geq\frac{1}{2}\|B_{\mu_{1}}-B_{\mu_{2}}\|_{\mathrm{F}}\geq\frac{\|T_{\mu}-B_{\mu}\otimes B_{\mu}\|_{\mathrm{op}}^{3/2}}{5\beta^{4}\|B_{\mu}\|_{\mathrm{F}}^{2}}.\]
\end{prop}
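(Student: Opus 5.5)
The plan is to chain the earlier lemmas with $f(x)=\langle Ax,x\rangle$. First, the identity $\mu=\frac{1}{2}\mu_1+\frac{1}{2}\mu_2$ and the existence of $\alpha$ are exactly the content of Lemma \ref{lem:s_F} applied with $\Omega=\mathbb{R}^d$ and this $f$. Here $f$ is $\mu$-integrable because $\mu\in\mathcal{P}_8(\mathbb{R}^d)$. The first inequality $s(\mu)\geq\frac{1}{2}\|B_{\mu_1}-B_{\mu_2}\|_{\mathrm{F}}$ is then immediate from the definition of $s(\mu)$ as a supremum over such decompositions.

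For the second inequality, I will test the matrix $B_{\mu_1}-B_{\mu_2}$ against $A$. Since $\|A\|_{\mathrm{F}}=1$, Cauchy--Schwarz gives
\[
\tfrac{1}{2}\|B_{\mu_1}-B_{\mu_2}\|_{\mathrm{F}}\geq\tfrac{1}{2}\langle A,B_{\mu_1}-B_{\mu_2}\rangle=\tfrac{1}{2}\Bigl(\int\langle Ax,x\rangle\,d\mu_1-\int\langle Ax,x\rangle\,d\mu_2\Bigr).
\]
By Lemma \ref{lem:s_F}, the right-hand side equals $\inf_{b}\mathbb{E}|W-b|$, where $W=\langle AX,X\rangle$. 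Lemma \ref{lem:inf_of_mean_bounds} then bounds this from below by $(\mathbb{E}|W-\mathbb{E}W|^2)^{3/2}/(5(\mathbb{E}W^4)^{1/2})$.

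It remains to identify the numerator and bound the denominator. By Lemma \ref{lem:Tmu-BtensorB},
\[
\mathbb{E}|W-\mathbb{E}W|^2=\langle(T_\mu-B_\mu\otimes B_\mu)(A),A\rangle=\lambda_1(T_\mu-B_\mu\otimes B_\mu)=\|T_\mu-B_\mu\otimes B_\mu\|_{\mathrm{op}}.
\]
The last equality uses that $A$ is a unit leading eigenvector and that the operator is positive semidefinite. For the denominator, Lemma \ref{lem:B_lower_bound} gives $(\mathbb{E}W^4)^{1/4}\leq\beta^2\|B_\mu\|_{\mathrm{F}}$, so $(\mathbb{E}W^4)^{1/2}\leq\beta^4\|B_\mu\|_{\mathrm{F}}^2$. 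Combining the three displays yields the claimed bound.

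I expect no real obstacle, since every step is an earlier lemma. The points needing care are minor. One is a degenerate case: if $T_\mu-B_\mu\otimes B_\mu=0$, the bound is trivially true. The hypothesis $\mu\neq\delta_0$ ensures $\|B_\mu\|_{\mathrm{F}}>0$, so the right-hand side is well defined; and if $\mathbb{E}W^4=0$ then $W\equiv 0$ and the numerator also vanishes. The other is a sign check: the difference of integrals in Lemma \ref{lem:s_F} is nonnegative, so Cauchy--Schwarz is applied in the correct direction.
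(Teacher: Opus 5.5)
Your proposal is correct and follows essentially the same route as the paper. Both apply Lemma \ref{lem:s_F} with $f(x)=\langle Ax,x\rangle$, test $B_{\mu_1}-B_{\mu_2}$ against the unit-norm $A$, and chain Lemmas \ref{lem:inf_of_mean_bounds}, \ref{lem:Tmu-BtensorB} and \ref{lem:B_lower_bound}; your remarks on the degenerate cases ($\mathbb{E}W^4=0$, $\|B_\mu\|_{\mathrm{F}}>0$) make explicit what the paper leaves implicit.
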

\begin{proof}
Taking $\Omega=\mathbb{R}^{d}$ and $f(x)=\langle Ax,x\rangle$, for $x\in\mathbb{R}^{d}$, in Lemma \ref{lem:s_F}, we obtain
\[\frac{1}{2}\left(\int_{\mathbb{R}^{d}}\langle Ax,x\rangle\,d\mu_{1}(x)-\int_{\mathbb{R}^{d}}\langle Ax,x\rangle\,d\mu_{2}(x)\right)=\inf_{b\in\mathbb{R}}\int_{\mathbb{R}^{d}}|\langle Ax,x\rangle-b|\,d\mu(x).\]
Since the left hand side is equal to $\displaystyle\frac{1}{2}\left\langle A,\int_{\mathbb{R}^{d}}xx^{T}\,d\mu_{1}(x)-\int_{\mathbb{R}^{d}}xx^{T}\,d\mu_{2}(x)\right\rangle$ and since $\|A\|_{\mathrm{F}}=1$, this implies that
\begin{equation}\label{secondmainlbexplicitproofeq1}
\frac{1}{2}\left\|\int_{\mathbb{R}^{d}}xx^{T}\,d\mu_{1}(x)-\int_{\mathbb{R}^{d}}xx^{T}\,d\mu_{2}(x)\right\|_{\mathrm{F}}\geq\inf_{b\in\mathbb{R}}\int_{\mathbb{R}^{d}}|\langle Ax,x\rangle-b|\,d\mu(x).
\end{equation}
But by Lemma \ref{lem:inf_of_mean_bounds},
\[\inf_{b\in\mathbb{R}}\int_{\mathbb{R}^{d}}|\langle Ax,x\rangle-b|\,d\mu(x)=\inf_{b\in\mathbb{R}}\mathbb{E}|\langle AX,X\rangle-b|\geq\frac{(\mathbb{E}|\langle AX,X\rangle-\mathbb{E}\langle AX,X\rangle|^{2})^{3/2}}{5\left(\mathbb{E}\langle AX,X\rangle^{4}\right)^{1/2}}.\]
By Lemma \ref{lem:Tmu-BtensorB},
\[\mathbb{E}|\langle AX,X\rangle-\mathbb{E}\langle AX,X\rangle|^{2}=\langle(T_{\mu}-B_{\mu}\otimes B_{\mu})(A),A\rangle=\|T_{\mu}-B_{\mu}\otimes B_{\mu}\|_{\mathrm{op}},\]
and by Lemma \ref{lem:B_lower_bound},
\[\mathbb{E}\langle AX,X\rangle^{4}\leq\beta^{8}\|B_{\mu}\|_{\mathrm{F}}^{4}.\]
Therefore,
\[\inf_{b\in\mathbb{R}}\int_{\mathbb{R}^{d}}|\langle Ax,x\rangle-b|\,d\mu(x)\geq\frac{\|T_{\mu}-B_{\mu}\otimes B_{\mu}\|_{\mathrm{op}}^{3/2}}{5\beta^{4}\|B_{\mu}\|_{\mathrm{F}}^{2}}.\]
So by (\ref{secondmainlbexplicitproofeq1}),
\[\frac{1}{2}\|B_{\mu_{1}}-B_{\mu_{2}}\|_{\mathrm{F}}=\frac{1}{2}\left\|\int_{\mathbb{R}^{d}}xx^{T}\,d\mu_{1}(x)-\int_{\mathbb{R}^{d}}xx^{T}\,d\mu_{2}(x)\right\|_{\mathrm{F}}\geq\frac{\|T_{\mu}-B_{\mu}\otimes B_{\mu}\|_{\mathrm{op}}^{3/2}}{5\beta^{4}\|B_{\mu}\|_{\mathrm{F}}^{2}}\]
\end{proof}

\begin{remark}\label{smuformula}
From (\ref{usefulsmuboundproofeq1}) and (\ref{secondmainlbexplicitproofeq1}) (note that (\ref{secondmainlbexplicitproofeq1}) actually holds for all $A\in\mathbb{R}_{\mathrm{sym}}^{d\times d}$ with $\|A\|_{\mathrm{F}}\leq 1$), we have
\[s(\mu)=\sup_{\|A\|_{\mathrm{F}}\leq 1}\inf_{b\in\mathbb{R}}\int_{\mathbb{R}^{d}}|\langle Ax,x\rangle-b|\,d\mu(x),\]
where the supremum is over all $A\in\mathbb{R}_{\mathrm{sym}}^{d\times d}$ with $\|A\|_{\mathrm{F}}\leq 1$.
\end{remark}

\subsection{Some linear algebra}
\begin{lemma}\cite[p. 157]{Audenart} \label{lem:audenart}
Let $V$ be a finite dimensional inner product space. If $T:V \to V$ is a positive semidefinite operator and $P: V \to V$ is an orthogonal projection, then 
\[
\opnorm{T} \leq \opnorm{PTP} + \opnorm{(I-P)T(I-P)}
\]
\end{lemma}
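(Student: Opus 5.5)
The claim is that for a positive semidefinite $T$ on $V$ and an orthogonal projection $P$, $\opnorm{T}\le\opnorm{PTP}+\opnorm{(I-P)T(I-P)}$. The plan is to factor $T$ through its square root and reduce to a statement about Gram-type operators. Write $Q=I-P$ and let $S=T^{1/2}$, which is positive semidefinite. Then
\[
PTP=(SP)^{*}(SP),\qquad QTQ=(SQ)^{*}(SQ).
\]
Taking norms gives $\opnorm{PTP}=\opnorm{SP}^{2}$ and $\opnorm{QTQ}=\opnorm{SQ}^{2}$. Since $\opnorm{T}=\opnorm{S}^{2}$, the claim becomes
\[
\opnorm{S}^{2}\leq\opnorm{SP}^{2}+\opnorm{SQ}^{2}.
\]

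To prove this, fix a unit vector $v\in V$ and split it as $v=Pv+Qv$, where $\|Pv\|^{2}+\|Qv\|^{2}=1$. By the triangle inequality,
\[
\|Sv\|\le\|SPv\|+\|SQv\|\le\opnorm{SP}\,\|Pv\|+\opnorm{SQ}\,\|Qv\|.
\]
Here we used $SPv=SP(Pv)$ and $\opnorm{SP}\,\|Pv\|\ge\|SP(Pv)\|$, and likewise for $Q$. Cauchy--Schwarz in $\mathbb{R}^{2}$ then bounds the right-hand side by
\[
\big(\opnorm{SP}^{2}+\opnorm{SQ}^{2}\big)^{1/2}\big(\|Pv\|^{2}+\|Qv\|^{2}\big)^{1/2}=\big(\opnorm{SP}^{2}+\opnorm{SQ}^{2}\big)^{1/2}.
\]
Squaring and taking the supremum over unit $v$ gives the reduced inequality, and hence the lemma.

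The only step that needs any care is the square-root factorization, which turns the compressions $PTP$ and $QTQ$ into squared norms $\opnorm{SP}^{2}$ and $\opnorm{SQ}^{2}$. A direct attack on $\langle Tv,v\rangle$ would leave the cross terms $\langle TPv,Qv\rangle$ uncontrolled. Once the factorization is in place, the argument is a two-term triangle inequality followed by Cauchy--Schwarz. Positive semidefiniteness is used exactly to guarantee that $S=T^{1/2}$ exists.
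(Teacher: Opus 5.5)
Your proof is correct and complete. The identities $PTP=(SP)^{*}(SP)$, $\opnorm{PTP}=\opnorm{SP}^{2}$ and $\opnorm{T}=\opnorm{S}^{2}$ all hold for $S=T^{1/2}$ on a real finite-dimensional inner product space. The vector-level triangle inequality followed by Cauchy--Schwarz in $\mathbb{R}^{2}$ then gives $\opnorm{S}^{2}\le\opnorm{SP}^{2}+\opnorm{SQ}^{2}$.

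The paper gives no proof of this lemma; it only cites Audenaert. Your argument therefore supplies a self-contained justification rather than reproducing one from the paper.

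For comparison, a standard argument uses the same square-root factorization but stays at the operator level. With $Q=I-P$,
\[
\opnorm{T}=\opnorm{S(P+Q)S}\le\opnorm{SPS}+\opnorm{SQS}=\opnorm{PTP}+\opnorm{QTQ},
\]
because $SPS=(PS)^{*}(PS)$ and $PTP=(PS)(PS)^{*}$ have the same nonzero eigenvalues. That version is a one-liner, and for the same reason it carries over verbatim to every unitarily invariant norm. Your vector-level argument is tied to the operator norm, which is all the paper needs.

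One small correction to your commentary: a direct attack on $\langle Tv,v\rangle$ does not leave the cross term uncontrolled. Cauchy--Schwarz for the positive semidefinite form $(x,y)\mapsto\langle Tx,y\rangle$ gives $|\langle TPv,Qv\rangle|\le\langle TPv,Pv\rangle^{1/2}\langle TQv,Qv\rangle^{1/2}$. That is exactly your argument rewritten without the square root.
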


\begin{lemma} \label{lem:psd_proj}
Let $A, B\in\mathbb{R}^{d\times d}$ be such that $A$ and $B-A$ are positive semidefinite. Then for every orthogonal projection $Q$ on $\mathbb{R}^{d}$, we have $\opnorm{QAQ} \leq \opnorm{QBQ}$.
\end{lemma}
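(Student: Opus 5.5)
The idea is to reduce the claim to the fact that $QMQ$ is positive semidefinite whenever $M$ is, and then compare operator norms through the variational formula for the largest eigenvalue.

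\textbf{Step 1 (compressions stay PSD).} Set $C = B - A$, which is PSD by hypothesis. Since $Q$ is an orthogonal projection, $Q = Q^T$. For every $x \in \mathbb{R}^d$ we have
\[
\langle QCQx, x\rangle = \langle C(Qx), Qx\rangle \geq 0,
\]
so $QCQ$ is PSD. The same computation with $A$ in place of $C$ shows that $QAQ$ is PSD. Because $QBQ = QAQ + QCQ$, the matrix $QBQ$ is a sum of two PSD matrices, and we obtain the Loewner ordering
\[
0 \preceq QAQ \preceq QBQ .
\]

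\textbf{Step 2 (compare norms).} For a PSD matrix $M$, the paper's notation section gives $\|M\|_{\mathrm{op}} = \lambda_1(M)$, and
\[
\lambda_1(M) = \sup_{\|x\|_2 = 1} \langle Mx, x\rangle .
\]
The ordering from Step 1 says that for every unit vector $x$,
\[
\langle QAQx, x\rangle \leq \langle QAQx, x\rangle + \langle QCQx, x\rangle = \langle QBQx, x\rangle .
\]
Taking the supremum over unit $x$ on both sides yields $\|QAQ\|_{\mathrm{op}} \leq \|QBQ\|_{\mathrm{op}}$.

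\textbf{Possible obstacle.} The only subtle point is that the norm-equals-largest-eigenvalue identification requires symmetry. If ``positive semidefinite'' is taken to include symmetry, as it is throughout the paper, this is automatic: $A$ and $C$ are symmetric and $Q$ is symmetric, so $QAQ$ and $QBQ$ are symmetric as well. With that convention the argument is routine and has no genuine obstacle.
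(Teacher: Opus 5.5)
Your proof is correct and follows the paper's argument: both identify the operator norm of a positive semidefinite compression with the supremum of its quadratic form over unit vectors, and then compare $\langle QAQx,x\rangle \le \langle QBQx,x\rangle$ pointwise. You also spell out why $QAQ$ and $Q(B-A)Q$ are positive semidefinite, which the paper's one-line proof leaves implicit.
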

\begin{proof}
Note
$\|QAQ\|_{\mathrm{op}}=\sup_{\|x\|_{2}\leq 1}\langle QAQx,x\rangle\leq\sup_{\|x\|_{2}\leq 1}\langle QBQx,x\rangle=\|QBQ\|_{\mathrm{op}}.$
\end{proof}

\begin{lemma} \label{lem:T-xtensorx_bound} 
Suppose that $V$ is a finite dimensional inner product space, $T:V \to V$ is linear and $x_0 \in V$. If $T-x_0 \otimes x_0$ is positive semidefinite, then 
\[
\frac{1}{2}(\lambda_2(T) + \lambda_1(T) - \|x_0\|^2)  \leq \opnorm{T-x_0 \otimes x_0} \leq 2(\lambda_2(T) + \lambda_1(T) - \|x_0\|^2).
\]
\end{lemma}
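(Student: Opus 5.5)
The plan is to write $S:=T-x_0\otimes x_0$, which is positive semidefinite by hypothesis, so that $T=S+x_0\otimes x_0$ is a rank one positive perturbation of $S$. Since $x_0\otimes x_0$ is positive semidefinite, $T$ is positive semidefinite. Also $\langle Tx_0,x_0\rangle\geq\|x_0\|^4$, so $\lambda_1(T)\geq\|x_0\|^2$ and the quantity $\lambda_1(T)-\|x_0\|^2$ is nonnegative. I will prove the two inequalities separately.

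\textbf{Lower bound.} First, by the triangle inequality, $\lambda_1(T)=\|T\|_{\mathrm{op}}\leq\|S\|_{\mathrm{op}}+\|x_0\|^2$. This gives $\lambda_1(T)-\|x_0\|^2\leq\|S\|_{\mathrm{op}}$. Second, I bound $\lambda_2(T)$ by Courant--Fischer. Restricted to the subspace $x_0^\perp$, the quadratic form of $T$ coincides with that of $S$. Since $\lambda_2(T)$ is the minimum over codimension one subspaces of the maximum of the quadratic form, $\lambda_2(T)\leq\max_{v\perp x_0,\|v\|=1}\langle Sv,v\rangle\leq\|S\|_{\mathrm{op}}$. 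Adding the two bounds gives $\lambda_2(T)+\lambda_1(T)-\|x_0\|^2\leq 2\|S\|_{\mathrm{op}}$.

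\textbf{Upper bound.} Here I would use Lemma \ref{lem:audenart} with a well chosen projection. The natural choice, projecting onto $x_0$, fails: the compression of $T$ to $x_0^\perp$ is not controlled by $\lambda_2(T)$, because the inequality goes the wrong way. Instead, let $v_1$ be a unit leading eigenvector of $T$ and let $P=v_1\otimes v_1$. Lemma \ref{lem:audenart} applied to the positive semidefinite operator $S$ gives
\[\|S\|_{\mathrm{op}}\leq\|PSP\|_{\mathrm{op}}+\|(I-P)S(I-P)\|_{\mathrm{op}}.\]
The second term is handled as follows. Since $S$ and $T-S$ are positive semidefinite, Lemma \ref{lem:psd_proj} (whose proof works verbatim on $V$) gives $\|(I-P)S(I-P)\|_{\mathrm{op}}\leq\|(I-P)T(I-P)\|_{\mathrm{op}}=\lambda_2(T)$, where the last equality holds because $v_1$ is an eigenvector of $T$. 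For the first term,
\[\|PSP\|_{\mathrm{op}}=\langle Sv_1,v_1\rangle=\lambda_1(T)-\langle x_0,v_1\rangle^2=\lambda_1(T)-\|x_0\|^2+\|(I-P)x_0\|^2.\]
The key step, which I expect to be the main obstacle, is to bound the leftover $\|(I-P)x_0\|^2$. To do this, apply Lemma \ref{lem:psd_proj} to $x_0\otimes x_0\leq T$ with the projection $I-P$. This gives $\|(I-P)x_0\|^2=\|(I-P)(x_0\otimes x_0)(I-P)\|_{\mathrm{op}}\leq\lambda_2(T)$.

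Combining the two terms yields $\|S\|_{\mathrm{op}}\leq 2\lambda_2(T)+\lambda_1(T)-\|x_0\|^2$. Since $\lambda_1(T)-\|x_0\|^2\geq 0$, the right-hand side is at most $2(\lambda_2(T)+\lambda_1(T)-\|x_0\|^2)$, which is the claimed upper bound.
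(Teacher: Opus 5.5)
Your proof is correct and follows the paper's argument essentially step by step. The upper bound is the same: you use the projection onto a leading eigenvector of $T$, Lemma \ref{lem:audenart}, and the same two applications of Lemma \ref{lem:psd_proj}. The only minor difference is in the lower bound. There you get $\lambda_2(T)\le\|T-x_0\otimes x_0\|_{\mathrm{op}}$ from Courant--Fischer on $x_0^\perp$, whereas the paper uses the equivalent standard characterization $\lambda_2(T)=\inf_R\|T-R\|_{\mathrm{op}}$ over rank one maps $R$.
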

\begin{proof} Let $T_0 = T-x_0 \otimes x_0$. 
Since $T$ is positive semidefinite, $\lambda_1(T) = \opnorm{T}$ and  
\[\lambda_2(T) = \inf_{R} \opnorm{T-R},\] 
where the infimum is over all rank 1 linear maps $R:V\to V$. Thus 
\begin{equation} \label{eq:psd_proj_upper_bound_1}
\opnorm{T-x_{0}\otimes x_{0}} \geq \lambda_2(T)
\end{equation}
and 
\begin{equation}\label{eq:psd_proj_upper_bound_2}
\opnorm{T-x_0 \otimes x_0} \geq \|T\|_{\mathrm{op}} - \opnorm{x_0 \otimes x_0} = \lambda_1(T) - \|x_0\|^2_2.
\end{equation}
The resulting lower bound can be concluded by taking the average of (\ref{eq:psd_proj_upper_bound_1}) and (\ref{eq:psd_proj_upper_bound_2}).

To get the upper bound, let $y \in V$ be a normalized leading eigenvector of $T$, i.e., $Ty = \lambda_1(T)y$ and $\|y\| = 1$. Then $P := y \otimes y$ is an orthogonal projection and by Lemma \ref{lem:audenart}, 
\begin{equation} \label{eq:psd_proj_lower_bound_1}
\opnorm{T_0} \leq \opnorm{PT_0P} + \opnorm{(I-P)T_0(I-P)}
\end{equation}

Since $T_{0}$ and $T-T_{0}$ are positive semidefinite, by Lemma \ref{lem:psd_proj}, we have
\[\opnorm{(I-P)T_0(I-P)}\leq\opnorm{(I-P)T(I-P)} = \lambda_2(T).\]
Hence, by (\ref{eq:psd_proj_lower_bound_1}),
\[
\opnorm{T_0} \leq \opnorm{PT_0P} + \lambda_2(T) = \langle T_0y, y \rangle + \lambda_2(T),
\]
and using $\langle T_0y, y \rangle = \langle Ty, y \rangle - \langle x_0, y \rangle^2 = \lambda_1(T) - \langle x_0, y \rangle^2$, we get
\begin{align}\label{eq:bound_on_T0}
\opnorm{T_0} 
&\leq \lambda_2(T) + \lambda_1(T) - \langle x_0, y \rangle^2 \\
\nonumber &= \lambda_2(T) + \lambda_1(T) - \lnorm{x_0}{2}^2 + (\lnorm{x_0}{2}^2 - \inner{x_0}{y}^2). 
\end{align}

Notice that by the Pythegorean Theorem,
\[\|x_0\|^2 - \inner{x_0}{y}^2 = \|x_0\|^2 - \|Px_0\|^2 
= \|(I-P)x_0\|^2,\]
and by Lemma \ref{lem:psd_proj},
\begin{equation*}
\|(I-P)x_0\|^2 = \opnorm{(I-P)(x_0\otimes x_0)(I-P)} \leq \opnorm{(I-P)T(I-P)} = \lambda_2(T).
\end{equation*} 
Therefore,
\begin{equation}\label{eq:bound_on_l2}
\|x_0\|^2 - \inner{x_0}{y}^2 \leq \lambda_2(T).
\end{equation}

Using (\ref{eq:bound_on_l2}) in (\ref{eq:bound_on_T0}) gives
\begin{align*}
\opnorm{T_0} &\leq 2\lambda_2(T) + \lambda_1(T) - \|x_{0}\|^2\\&\leq
2(\lambda_2(T)+\lambda_1(T)-\|x_{0}\|^2),
\end{align*}
where the last inequality follows from the fact that $T-x_{0}\otimes x_{0}$ is positive semidefinite, so $\lambda_{1}(T)=\|T\|_{\mathrm{op}}\geq\|x_{0}\|^{2}$.
\end{proof}

\subsection{Completing the proof}

\begin{proof}[Proof Theorem \ref{thm:spectral_decomposition}]
By Proposition \ref{usefulsmubound} and Proposition \ref{secondmainlbexplicit}, we have
\[\frac{\|T_{\mu}-B_{\mu}\otimes B_{\mu}\|_{\mathrm{op}}^{3}}{25\beta^{8}\|B_{\mu}\|_{\mathrm{F}}^{4}}\leq s(\mu)^{2}\leq\|T_{\mu}-B_{\mu}\otimes B_{\mu}\|_{\mathrm{op}}.\]
Since the operator $T_{\mu}-B_{\mu}\otimes B_{\mu}$ is positive semidefinite (by Lemma \ref{lem:Tmu-BtensorB}), we can apply Lemma \ref{lem:T-xtensorx_bound} and obtain
\[\frac{1}{2}(\lambda_{2}(T_{\mu})+\lambda_{1}(T_{\mu})-\|B_{\mu}\|_{\mathrm{F}}^{2})\leq \|T_{\mu}-B_{\mu}\otimes B_{\mu}\|_{\mathrm{op}}\leq 2(\lambda_{2}(T_{\mu})+\lambda_{1}(T_{\mu})-\|B_{\mu}\|_{\mathrm{F}}^{2}).\]
Therefore,
\[\frac{(\lambda_{2}(T_{\mu})+\lambda_{1}(T_{\mu})-\|B_{\mu}\|_{\mathrm{F}}^{2})^{3}}{200\beta^{8}\|B_{\mu}\|_{\mathrm{F}}^{4}}\leq s(\mu)^{2}\leq2(\lambda_{2}(T_{\mu})+\lambda_{1}(T_{\mu})-\|B_{\mu}\|_{\mathrm{F}}^{2}),\]
and so dividing by $\|B_{\mu}\|_{\mathrm{F}}^{2}$, we get
\begin{equation}\label{secondmainproofeq1}
\frac{1}{200\beta^{8}}\left(\frac{\lambda_{2}(T_{\mu})+\lambda_{1}(T_{\mu})-\|B_{\mu}\|_{\mathrm{F}}^{2}}{\|B_{\mu}\|_{\mathrm{F}}^{2}}\right)^{3}\leq \left(\frac{s(\mu)}{\|B_{\mu}\|_{\mathrm{F}}}\right)^{2}\leq\frac{2(\lambda_{2}(T_{\mu})+\lambda_{1}(T_{\mu})-\|B_{\mu}\|_{\mathrm{F}}^{2})}{\|B_{\mu}\|_{\mathrm{F}}^{2}}.
\end{equation}
Since $\|B_{\mu}\|_{\mathrm{F}}^{2}\leq\lambda_{1}(T_{\mu})$ by Theorem \ref{firstmain}, the lower bound for $(\frac{s(\mu)}{\|B_{\mu}\|_{\mathrm{F}}})^{2}$ in (\ref{secondmaineq1}) follows. To prove the upper bound, we consider two cases.

Case 1: $\lambda_1(T_{\mu}) \leq 2 \fnorm{B_{\mu}}^2$.

We have 
\begin{align*}
\left(\frac{s(\mu)}{\|B_{\mu}\|_{\mathrm{F}}}\right)^{2}\leq
\frac{2(\lambda_{2}(T_{\mu})+\lambda_{1}(T_{\mu})-\|B_{\mu}\|_{\mathrm{F}}^{2})}{\|B_{\mu}\|_{\mathrm{F}}^{2}}\leq
\frac{2(\lambda_{2}(T_{\mu})+\lambda_{1}(T_{\mu})-\|B_{\mu}\|_{\mathrm{F}}^{2})}{\frac{1}{2}\lambda_{1}(T_{\mu})}.
\end{align*}
Thus, the upper bound for $(\frac{s(\mu)}{\|B_{\mu}\|_{\mathrm{F}}})^{2}$ in (\ref{secondmaineq1}) follows in this case.

Case 2: $\lambda_1(T_{\mu}) \geq 2 \fnorm{B_{\mu}}^2$.

We have
\[4\left[\frac{\lambda_{2}(T_{\mu})}{\lambda_{1}(T_{\mu})}+\left(1-\frac{\|B_{\mu}\|_{\mathrm{F}}^{2}}{\lambda_{1}(T_{\mu})}\right)\right]\geq
4\left(1-\frac{\|B_{\mu}\|_{\mathrm{F}}^{2}}{\lambda_{1}(T_{\mu})}\right)\geq 4\left(1-\frac{1}{2}\right)=2.\]
On the other hand, by (\ref{eq:s_remark}), we have $(\frac{s(\mu)}{\fnorm{B}})^2 \leq 1$. Thus, the upper bound for $(\frac{s(\mu)}{\|B_{\mu}\|_{\mathrm{F}}})^{2}$ in (\ref{secondmaineq1}) also holds in this case.
\end{proof}

\section{Proof of the examples I}\label{smuexamplesproofs}
In this section, we prove the bounds for $s(\mu)$ stated in Example \ref{introproduct}-Example \ref{introgeneralmixture}. Throughout this section, $[d]:=\{1,\ldots,d\}$ for $d\in\mathbb{N}$.
\begin{lemma}\label{cumulant}
Suppose that $X_{1},\ldots,X_{d}$ are independent identically distributed random variables with $\mathbb{E}X_{1}=0$, $\mathbb{E}X_{1}^{2}=1$ and $\mathbb{E}X_{1}^{4}<\infty$. Then for all $i,j,k,\ell\in[d]$,
\begin{equation}\label{cumulanteq}
\mathbb{E}(X_{i}X_{j}X_{k}X_{\ell})=\delta_{i=j}\delta_{k=\ell}+\delta_{i=k}\delta_{j=\ell}+\delta_{i=\ell}\delta_{j=k}+(\mathbb{E}X_{1}^{4}-3)\delta_{i=j=k=\ell},
\end{equation}
where $\delta_{i=j}=\begin{cases}1,&i=j\\0,&i\neq j\end{cases}$ and $\delta_{i=j=k=\ell}=\begin{cases}1,&i=j=k=\ell\\0,&\text{Otherwise}\end{cases}$.
\end{lemma}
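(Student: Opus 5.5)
The statement is the standard fourth-moment (Isserlis-type) formula for i.i.d. centered variables with unit variance. My plan is to prove it by a direct case analysis on the pattern of coincidences among the indices $i,j,k,\ell$. By independence, $\mathbb{E}(X_iX_jX_kX_\ell)$ factors as a product over the distinct indices present of $\mathbb{E}X_m^{r_m}$, where $r_m$ is the multiplicity of index $m$. Since $\mathbb{E}X_1=0$, the expectation vanishes whenever some index appears with multiplicity exactly $1$. Note that multiplicity $3$ forces another index to have multiplicity $1$, since $3+1=4$.

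So only two patterns survive. The first is all four indices equal, which gives $\mathbb{E}X_1^4$. The second is two distinct indices, each appearing twice, which gives $(\mathbb{E}X_1^2)^2=1$. I will check that the right-hand side of (\ref{cumulanteq}) reproduces these values case by case.

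\textbf{All four equal.} Each of the three pairing terms $\delta_{i=j}\delta_{k=\ell}$, $\delta_{i=k}\delta_{j=\ell}$, $\delta_{i=\ell}\delta_{j=k}$ equals $1$, and the correction term contributes $\mathbb{E}X_1^4-3$. The total is $\mathbb{E}X_1^4$, as required.

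\textbf{Two distinct values, each twice.} Exactly one of the three pairings matches the partition of $\{i,j,k,\ell\}$ into two equal pairs. That pairing term is $1$, the other two pairing terms are $0$, and $\delta_{i=j=k=\ell}=0$. The total is $1$.

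\textbf{Every other pattern.} Some index appears exactly once. Then every pairing product contains a factor $\delta$ comparing that index with a different one, so it is $0$. The all-equal indicator is also $0$. The right-hand side is therefore $0$, matching the left-hand side.

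The main obstacle, if any, is bookkeeping. I need to make sure the partition into cases is exhaustive: the multiplicity patterns of four indices are $(4)$, $(3,1)$, $(2,2)$, $(2,1,1)$ and $(1,1,1,1)$. I also need the claim in the $(2,2)$ case that exactly one pairing term is nonzero. This holds because a pairing term is nonzero only when it pairs equal indices, and with two distinct values appearing twice each, only the pairing that groups equal indices does so.
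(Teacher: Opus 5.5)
Your proposal is correct and follows essentially the same route as the paper: a case split into \emph{some index value appears exactly once} (both sides vanish), \emph{two distinct values each appearing twice} (both sides equal $1$), and \emph{all four equal} (both sides equal $\mathbb{E}X_{1}^{4}$). Your explicit enumeration of the multiplicity patterns $(4),(3,1),(2,2),(2,1,1),(1,1,1,1)$ confirms that these three cases are exhaustive. You could also note that $\mathbb{E}X_{1}^{4}<\infty$ makes every product $X_iX_jX_kX_\ell$ integrable, so factoring via independence is justified.
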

\begin{proof}
Case 1: One of $i,j,k,\ell$ is different from the rest, e.g., $i\neq j,k,\ell$.

Both sides of (\ref{cumulanteq}) are equal to $0$.

Case 2: $(i=j\neq k=\ell)$ or $(i=k\neq j=\ell)$ or $(i=\ell\neq j=k)$.

Both sides of (\ref{cumulanteq}) are equal to $1$.

Case 3: $i=j=k=\ell$.

Both sides of (\ref{cumulanteq}) are equal to $\mathbb{E}X_{1}^{4}$.
\end{proof}

\begin{lemma}\label{tmuiid}
Suppose that $X_{1},\ldots,X_{d}$ are independent identically, distributed random variables with $\mathbb{E}X_{1}=0$, $\mathbb{E}X_{1}^{2}=1$ and $\mathbb{E}X_{1}^{4}<\infty$. Let $\mu$ be the distribution of the random vector $X=(X_{1},\ldots,X_{d})^{T}$. Then
\[T_{\mu}(A)=2A+\mathrm{Tr}(A)I+(\mathbb{E}X_{1}^{4}-3)\mathrm{diag}(A),\]
for all $A\in\mathbb{R}_{\mathrm{sym}}^{d\times d}$, where $\mathrm{diag}(A)$ is the $d\times d$ diagonal matrix with diagonal entries being the diagonal entries of $A$.
\end{lemma}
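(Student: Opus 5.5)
The identity follows entrywise from Lemma \ref{cumulant}, so the plan is to compute the $(k,\ell)$ entry of $T_{\mu}(A)$ directly. By Definition \ref{def:T_mu}, $T_{\mu}(A)=\mathbb{E}\,\langle AX,X\rangle XX^{T}$. Hence for $k,\ell\in[d]$,
\[
\langle T_{\mu}(A)e_{\ell},e_{k}\rangle=\mathbb{E}\Big(\sum_{i,j\in[d]}A_{ij}X_{i}X_{j}\Big)X_{k}X_{\ell}=\sum_{i,j\in[d]}A_{ij}\,\mathbb{E}(X_{i}X_{j}X_{k}X_{\ell}).
\]
Interchanging the finite sum with the expectation is justified because $\mathbb{E}X_{1}^{4}<\infty$, so every fourth-order product is integrable.

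Next I would substitute (\ref{cumulanteq}) and handle its four terms one at a time.
\begin{itemize}
\item The term $\delta_{i=j}\delta_{k=\ell}$ contributes $\sum_{i}A_{ii}\,\delta_{k=\ell}=\mathrm{Tr}(A)\,I_{k\ell}$.
\item The term $\delta_{i=k}\delta_{j=\ell}$ contributes $A_{k\ell}$.
\item The term $\delta_{i=\ell}\delta_{j=k}$ contributes $A_{\ell k}=A_{k\ell}$, using the symmetry of $A$.
\item The term $(\mathbb{E}X_{1}^{4}-3)\delta_{i=j=k=\ell}$ contributes $(\mathbb{E}X_{1}^{4}-3)A_{kk}\,\delta_{k=\ell}$, which is exactly the $(k,\ell)$ entry of $(\mathbb{E}X_{1}^{4}-3)\,\mathrm{diag}(A)$.
\end{itemize}
Summing the four contributions gives the $(k,\ell)$ entry of $2A+\mathrm{Tr}(A)I+(\mathbb{E}X_{1}^{4}-3)\,\mathrm{diag}(A)$. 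Since this holds for every $k,\ell$, the identity follows.

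There is no real obstacle here. The only care points are the two noted above: the symmetry of $A$ is what combines the two off-diagonal pairings into the coefficient $2$, and integrability is what permits the interchange of sum and expectation.
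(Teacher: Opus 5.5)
Your proof is correct and is essentially the paper's argument: expand $\mathbb{E}\langle AX,X\rangle XX^{T}$ in the standard basis, insert Lemma \ref{cumulant}, and use $A=A^{T}$ to merge the two pairing terms into $2A$; you just do the bookkeeping entrywise rather than at the level of whole matrices. Incidentally, your plus sign on the $(\mathbb{E}X_{1}^{4}-3)\,\mathrm{diag}(A)$ term is the right one, whereas the paper's displayed computation writes this term with a minus sign, which is a typo.
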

\begin{proof}
The inner product $\langle\cdot\,,\cdot\rangle$ defined in Definition \ref{def:T_mu} can be extended to the vector space $\mathbb{R}^{d\times d}$ by setting $\langle A,B\rangle:=\mathrm{Tr}(AB^{T})$ for $A,B\in\mathbb{R}^{d\times d}$.

Fix $A\in\mathbb{R}_{\mathrm{sym}}^{d\times d}$. We have
\[T_{\mu}(A)=\mathbb{E}\langle A,XX^{T}\rangle XX^{T}=\sum_{i,j,k,\ell\in[d]}(\mathbb{E}X_{i}X_{j}X_{k}X_{\ell})\langle A,e_{i}e_{j}^{T}\rangle e_{k}e_{\ell}^{T}.\]
But by Lemma \ref{cumulant},
\[\mathbb{E}(X_{i}X_{j}X_{k}X_{\ell})=\delta_{i=j}\delta_{k=\ell}+\delta_{i=k}\delta_{j=\ell}+\delta_{i=\ell}\delta_{j=k}+(\mathbb{E}X_{1}^{4}-3)\delta_{i=j=k=\ell}.\]
Therefore,
\begin{align*}
T_{\mu}(A)=&
\sum_{i,k\in[d]}\langle A,e_{i}e_{i}^{T}\rangle e_{k}e_{k}^{T}+\sum_{i,j\in[d]}\langle A,e_{i}e_{j}^{T}\rangle e_{i}e_{j}^{T}+\sum_{i,j\in[d]}\langle A,e_{i}e_{j}^{T}\rangle e_{j}e_{i}^{T}\\&
-(\mathbb{E}X_{1}^{4}-3)\sum_{i\in[d]}\langle A,e_{i}e_{i}^{T}\rangle e_{i}e_{i}^{T}\\=&
\mathrm{Tr}(A)I+A+A^{T}-(\mathbb{E}X_{1}^{4}-3)\mathrm{diag}(A).
\end{align*}
Since $A^{T}=A$, the result follows.
\end{proof}

\begin{lemma}\label{tmugaussian}
Let $\Sigma$ be a $d\times d$ positive semidefinite matrix and $\mu=\mathcal{N}(0,\Sigma)$. Then
\[T_{\mu}(A)=2\Sigma A\Sigma+\langle A,\Sigma\rangle\Sigma,\]
for all $A\in\mathbb{R}_{\mathrm{sym}}^{d\times d}$.
\end{lemma}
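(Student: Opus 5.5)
The plan is to reduce everything to the Gaussian case of Lemma \ref{tmuiid}, or alternatively to Isserlis' (Wick's) formula applied directly. Write $X\sim\mathcal{N}(0,\Sigma)$ as $X=\Sigma^{1/2}G$, where $G\sim\mathcal{N}(0,I)$ has i.i.d.\ standard normal coordinates. Since $\mathbb{E}G_{1}^{4}=3$, Lemma \ref{tmuiid} (whose proof does not actually require $A$ to be symmetric, beyond the final step) gives, for every symmetric $C$,
\[\mathbb{E}\langle C,GG^{T}\rangle GG^{T}=2C+\mathrm{Tr}(C)I .\]

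Now fix $A\in\mathbb{R}_{\mathrm{sym}}^{d\times d}$. The key identity is $\langle A,XX^{T}\rangle=\langle \Sigma^{1/2}A\Sigma^{1/2},GG^{T}\rangle$, which follows from cyclicity of the trace, and $XX^{T}=\Sigma^{1/2}GG^{T}\Sigma^{1/2}$. Put $C=\Sigma^{1/2}A\Sigma^{1/2}$, which is symmetric. Then
\[T_{\mu}(A)=\Sigma^{1/2}\Bigl(\mathbb{E}\langle C,GG^{T}\rangle GG^{T}\Bigr)\Sigma^{1/2}=\Sigma^{1/2}\bigl(2C+\mathrm{Tr}(C)I\bigr)\Sigma^{1/2}.\]
Here we pull the deterministic matrices $\Sigma^{1/2}$ outside the expectation by linearity.

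Substituting back, we get $\Sigma^{1/2}C\Sigma^{1/2}=\Sigma A\Sigma$ and $\mathrm{Tr}(C)=\mathrm{Tr}(A\Sigma)=\langle A,\Sigma\rangle$. This yields
\[T_{\mu}(A)=2\Sigma A\Sigma+\langle A,\Sigma\rangle\Sigma,\]
as claimed.

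There is no real obstacle. The only points needing care are the following.
\begin{itemize}
\item $\mu$ is the pushforward of $\mathcal{N}(0,I)$ under $\Sigma^{1/2}$. This also covers singular $\Sigma$, using the positive semidefinite square root.
\item The fourth moments are finite, so the interchange of expectation with linear maps is justified.
\end{itemize}
As a sanity check, taking $\Sigma=I$ recovers $T_{\mu}(A)=2A+\mathrm{Tr}(A)I$, which is consistent with Example \ref{examplelambdastandardnormal}.
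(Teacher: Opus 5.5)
Your proposal is correct and follows essentially the same route as the paper's proof. You write $X=\Sigma^{1/2}g$ with $g\sim\mathcal{N}(0,I)$, apply Lemma \ref{tmuiid} (with $\mathbb{E}g_1^4=3$) to the symmetric matrix $\Sigma^{1/2}A\Sigma^{1/2}$, and conjugate back by $\Sigma^{1/2}$, using $\mathrm{Tr}(\Sigma^{1/2}A\Sigma^{1/2})=\langle A,\Sigma\rangle$.
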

\begin{proof}
Let $g\sim\mathcal{N}(0,I)$ be a standard Gaussian vector in $\mathbb{R}^{d}$. By Lemma \ref{tmuiid}, we have
\begin{equation}\label{tmugaussianproofeq1}
\mathbb{E}\langle Mg,g\rangle gg^{T}=2M+\mathrm{Tr}(M)I,
\end{equation}
for all $M\in\mathbb{R}_{\mathrm{sym}}^{d\times d}$. Let $X=\Sigma^{\frac{1}{2}}g$. Since $X$ has distribution $\mathcal{N}(0,\Sigma)$, for all $A\in\mathbb{R}_{\mathrm{sym}}^{d\times d}$, we have
\begin{eqnarray*}
T_{\mu}(A)&=&\mathbb{E}\langle A,XX^{T}\rangle XX^{T}\\&=&
\mathbb{E}\langle AX,X\rangle XX^{T}\\&=&
\mathbb{E}\langle\Sigma^{\frac{1}{2}}A\Sigma^{\frac{1}{2}}g,g\rangle\Sigma^{\frac{1}{2}}gg^{T}\Sigma^{\frac{1}{2}}\\&=&
\Sigma^{\frac{1}{2}}\left(\mathbb{E}\langle\Sigma^{\frac{1}{2}}A\Sigma^{\frac{1}{2}}g,g\rangle gg^{T}\right)\Sigma^{\frac{1}{2}}.
\end{eqnarray*}
Since $A$ is symmetric, $\Sigma^{\frac{1}{2}}A\Sigma^{\frac{1}{2}}$ is also symmetric.  So by (\ref{tmugaussianproofeq1}),
\[T_{\mu}(A)=\Sigma^{\frac{1}{2}}\left(2\Sigma^{\frac{1}{2}}A\Sigma^{\frac{1}{2}}+\mathrm{Tr}(\Sigma^{\frac{1}{2}}A\Sigma^{\frac{1}{2}})I\right)\Sigma^{\frac{1}{2}}=2\Sigma A\Sigma+\mathrm{Tr}(A\Sigma)\Sigma,\]
which proves the lemma.
\end{proof}

\begin{lemma}\label{smu1dsinglenormal}
Let $a\geq 0$. Consider the normal distribution $\mathcal{N}(0,a)$ on $\mathbb{R}$ with mean $0$ and variance $a$. We have $s(\mathcal{N}(0,a))\geq 0.8a$.
\end{lemma}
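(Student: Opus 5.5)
We work directly with Definition \ref{defsmu} in dimension $d=1$. Here $B_{\nu}=\int x^{2}\,d\nu$ is a scalar, and the Frobenius norm is the absolute value. The case $a=0$ is trivial, and by scaling ($s$ scales linearly under $x\mapsto \sqrt{a}\,x$, since $B$ scales by $a$) it suffices to treat $a=1$. Let $\mu=\mathcal{N}(0,1)$ and $X\sim\mu$; we must show $s(\mu)\geq 0.8$.

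We apply Lemma \ref{lem:s_F} with $\Omega=\mathbb{R}$ and $f(x)=x^{2}$. It produces a decomposition $\mu=\frac{1}{2}\mu_{1}+\frac{1}{2}\mu_{2}$, namely the split of $\mu$ at the median $b_{0}$ of $X^{2}$, for which
\[\frac{1}{2}\left|B_{\mu_{1}}-B_{\mu_{2}}\right|=\inf_{b\in\mathbb{R}}\mathbb{E}|X^{2}-b|=\mathbb{E}|X^{2}-b_{0}|.\]
Equivalently, by Remark \ref{smuformula} with $A=1$, we get $s(\mu)\geq\inf_{b}\mathbb{E}|X^{2}-b|$. Since $X^{2}$ is continuous, $\alpha$ plays no role.

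It remains to evaluate $\mathbb{E}|X^{2}-b_{0}|$ numerically. Write $b_{0}=m^{2}$, where $m\approx0.6745$ is the upper quartile of $\mathcal{N}(0,1)$, so that $\mathbb{P}(|X|\leq m)=1/2$. Since $\mathbb{P}(X^{2}>m^{2})=\mathbb{P}(X^{2}<m^{2})=1/2$,
\[\mathbb{E}|X^{2}-m^{2}|=\mathbb{E}X^{2}\bigl(I(|X|>m)-I(|X|<m)\bigr).\]
Using $\mathbb{E}X^{2}I(|X|>m)=2\int_{m}^{\infty}x^{2}\varphi(x)\,dx=2\bigl(m\varphi(m)+\tfrac{1}{4}\bigr)$, obtained by integrating by parts with $\mathbb{P}(X>m)=1/4$, and $\mathbb{E}X^{2}=1$, we get
\[\mathbb{E}|X^{2}-m^{2}|=2\cdot 2\bigl(m\varphi(m)+\tfrac14\bigr)-1=4m\varphi(m).\]
Here $\varphi(0.6745)\approx0.3178$, so the value is about $4\cdot0.6745\cdot0.3178\approx0.857>0.8$.

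The only real obstacle is making this numerical step rigorous, since $m$ is defined only implicitly. We bracket $m\in[0.674,0.675]$ using $\Phi(0.674)<0.75<\Phi(0.675)$. On this interval $x\varphi(x)$ is increasing (it increases on $[0,1]$), so $4m\varphi(m)\geq 4\cdot0.674\cdot\varphi(0.675)>0.85$, which gives a comfortable margin over $0.8$.
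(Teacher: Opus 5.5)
Your proposal is correct and is essentially the paper's proof: the median split of $X^{2}$ from Lemma \ref{lem:s_F} is exactly the paper's decomposition of $\mathcal{N}(0,1)$ into its normalized restrictions to $\{|x|\leq c\}$ and $\{|x|>c\}$ with $c\approx 0.6745$, and your identity $\mathbb{E}|X^{2}-m^{2}|=4m\varphi(m)$ is a closed-form evaluation of the paper's quantity $\int_{|x|>c}x^{2}f(x)\,dx-\int_{-c}^{c}x^{2}f(x)\,dx=0.8573\ldots$. The only thing you add is the bracketing $m\in[0.674,0.675]$, which makes rigorous the numerical value that the paper simply states.
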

\begin{proof}
Without loss of generality, since $\mathcal{N}(0,a)$ is the pushforward measure of $\mathcal{N}(0,1)$ by the map $x\mapsto\sqrt{a}\cdot x$, we may assume that $a=1$. Let $f(x)=\frac{1}{\sqrt{2\pi}}e^{-x^{2}/2}$ be the density of $\mathcal{N}(0,1)$. Let $c=0.67448\ldots$ be such that $\int_{-c}^{c}f(x)\,dx=\frac{1}{2}$. Define the continuous probability measures $\mu_{1}$ and $\mu_{2}$ on $\mathbb{R}$ as follows:
\begin{align*}
d\mu_1(x) &= 2f(x)I(-c\leq x\leq c)\,dx\\
d\mu_2(x) &= 2f(x)I(|x|>c)\,dx.\nonumber
\end{align*}
Note that $\frac{1}{2}d\mu_1(x)+\frac{1}{2}d\mu_{2}(x)=f(x)\,dx$, so $\frac{1}{2}\mu_{1}+\frac{1}{2}\mu_{2}=\mathcal{N}(0,1)$. Thus, we have
\begin{eqnarray*}
s(\mu)&\geq&\frac{1}{2}\left(\int_{\mathbb{R}}x^{2}\,d\mu_{2}(x)-\int_{\mathbb{R}}x^{2}\,d\mu_{1}(x)\right)\\&=&
\int_{|x|>c}x^{2}f(x)\,dx-\int_{-c}^{c}x^{2}f(x)\,dx=0.8573\ldots.
\end{eqnarray*}
The result follows.
\end{proof}

\begin{lemma}\label{smuproduct}
Suppose that $\mu$ is a probability measure on $\mathbb{R}^{d_{1}}$ and $\nu$ is a probability measure on $\mathbb{R}^{d_{2}}$. Consider the product measure $\mu\times\nu$ on $\mathbb{R}^{d_{1}+d_{2}}$. We have $s(\mu\times\nu)\geq s(\mu)$.
\end{lemma}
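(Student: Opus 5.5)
The plan is to lift any mixture decomposition of $\mu$ to a decomposition of $\mu\times\nu$ by tensoring with $\nu$, and then check that the Frobenius norm of the difference of second order statistics can only grow. Fix probability measures $\mu_{1},\mu_{2}$ on $\mathbb{R}^{d_{1}}$ with $\mu=\frac{1}{2}\mu_{1}+\frac{1}{2}\mu_{2}$. Then the product measures $\mu_{1}\times\nu$ and $\mu_{2}\times\nu$ are probability measures on $\mathbb{R}^{d_{1}+d_{2}}$, and by bilinearity of the product of measures,
\[\mu\times\nu=\tfrac{1}{2}(\mu_{1}\times\nu)+\tfrac{1}{2}(\mu_{2}\times\nu).\]
So by Definition \ref{defsmu}, $s(\mu\times\nu)\geq\frac{1}{2}\|B_{\mu_{1}\times\nu}-B_{\mu_{2}\times\nu}\|_{\mathrm{F}}$.

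Next compute the second order statistics matrices in block form. Writing a point of $\mathbb{R}^{d_{1}+d_{2}}$ as $(x,y)$ with $x\in\mathbb{R}^{d_{1}}$, $y\in\mathbb{R}^{d_{2}}$, we have
\[B_{\mu_{i}\times\nu}=\begin{bmatrix}B_{\mu_{i}}& m_{i}w^{T}\\ w m_{i}^{T}& B_{\nu}\end{bmatrix},\]
where $m_{i}=\int x\,d\mu_{i}(x)$ and $w=\int y\,d\nu(y)$; this uses Fubini and the independence of the coordinates under a product measure. A technical point is that the first moments must be finite, which holds if $\mu,\nu\in\mathcal{P}_{2}$; if $s(\mu\times\nu)$ or the moments are infinite, the inequality is either trivial or follows by a truncation convention, and I would simply assume finite second moments as implicit in the definition of $s$.

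Subtracting, the lower right blocks cancel and the upper left block of $B_{\mu_{1}\times\nu}-B_{\mu_{2}\times\nu}$ is exactly $B_{\mu_{1}}-B_{\mu_{2}}$. Since the Frobenius norm squared is the sum of squared entries over all blocks, dropping the off-diagonal blocks gives
\[\|B_{\mu_{1}\times\nu}-B_{\mu_{2}\times\nu}\|_{\mathrm{F}}\geq\|B_{\mu_{1}}-B_{\mu_{2}}\|_{\mathrm{F}}.\]
Combining this with the first step and taking the supremum over all decompositions $\mu=\frac{1}{2}\mu_{1}+\frac{1}{2}\mu_{2}$ yields $s(\mu\times\nu)\geq s(\mu)$. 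I expect no real obstacle here; the only care needed is the block computation and the integrability of the moments.
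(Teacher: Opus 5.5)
Your proposal is correct and is essentially the paper's proof: both lift a decomposition $\mu=\frac{1}{2}\mu_{1}+\frac{1}{2}\mu_{2}$ to $\mu\times\nu=\frac{1}{2}(\mu_{1}\times\nu)+\frac{1}{2}(\mu_{2}\times\nu)$ and observe that the upper-left $d_{1}\times d_{1}$ block of $B_{\mu_{1}\times\nu}-B_{\mu_{2}\times\nu}$ is $B_{\mu_{1}}-B_{\mu_{2}}$. The paper extracts that block by compressing with the coordinate projection $P$, which does the same thing as your dropping of the off-diagonal blocks. Your integrability remark is also fine: $s(\mu\times\nu)$ is defined only when $\mu\times\nu\in\mathcal{P}_{2}$, which forces $\mu,\nu\in\mathcal{P}_{2}$, and then $\mu_{i}\leq 2\mu$ gives finite first and second moments for each $\mu_{i}$.
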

\begin{proof}
Let $P$ be the canonical projection from $\mathbb{R}^{d_{1}+d_{2}}=\mathbb{R}^{d_{1}}\times\mathbb{R}^{d_{2}}$ onto $\mathbb{R}^{d_{1}}$. Let $\mu_{1},\mu_{2}$ be probability measures on $\mathbb{R}^{d_{1}}$ such that $\mu=\frac{1}{2}\mu_{1}+\frac{1}{2}\mu_{2}$. Then
\[\mu\times\nu=\frac{1}{2}(\mu_{1}\times\nu)+\frac{1}{2}(\mu_{2}\times\nu).\]
Thus, we have
\[s(\mu\times\nu)\geq\frac{1}{2}\|B_{\mu_{1}\times\nu}-B_{\mu_{2}\times\nu}\|_{\mathrm{F}}\geq
\frac{1}{2}\|P(B_{\mu_{1}\times\nu}-B_{\mu_{2}\times\nu})P\|_{\mathrm{F}}=
\frac{1}{2}\|B_{\mu_{1}}-B_{\mu_{2}}\|_{\mathrm{F}}.\]
The result follows by taking supremum over all $\mu_{1},\mu_{2}$ such that $\mu=\frac{1}{2}\mu_{1}+\frac{1}{2}\mu_{2}$.
\end{proof}

\begin{lemma}\label{smumixture}
Suppose that $\mu,\nu$ are a probability measure on $\mathbb{R}$. Consider the mixture measure $\frac{1}{2}\mu+\frac{1}{2}\nu$. We have $s(\frac{1}{2}\mu+\frac{1}{2}\nu)\geq\frac{1}{2}s(\mu)$.
\end{lemma}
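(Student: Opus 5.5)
Given a decomposition of $\mu$, I will build a decomposition of $\rho:=\frac{1}{2}\mu+\frac{1}{2}\nu$ by splitting the $\mu$-part in the same way and leaving the $\nu$-part untouched, in the spirit of the proof of Lemma \ref{smuproduct}.

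Fix probability measures $\mu_{1},\mu_{2}$ with $\mu=\frac{1}{2}\mu_{1}+\frac{1}{2}\mu_{2}$, and define the probability measures
\[\rho_{1}:=\tfrac{1}{2}\mu_{1}+\tfrac{1}{2}\nu,\qquad \rho_{2}:=\tfrac{1}{2}\mu_{2}+\tfrac{1}{2}\nu.\]
Then $\frac{1}{2}\rho_{1}+\frac{1}{2}\rho_{2}=\frac{1}{4}\mu_{1}+\frac{1}{4}\mu_{2}+\frac{1}{2}\nu=\frac{1}{2}\mu+\frac{1}{2}\nu=\rho$, so $(\rho_{1},\rho_{2})$ is an admissible decomposition in the definition of $s(\rho)$.

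Since $B$ is linear in the measure, $B_{\rho_{1}}-B_{\rho_{2}}=\frac{1}{2}(B_{\mu_{1}}-B_{\mu_{2}})$. In dimension one these are just the second moments, and the Frobenius norm is the absolute value, but the identity holds in any dimension. Therefore
\[s(\rho)\geq\tfrac{1}{2}\|B_{\rho_{1}}-B_{\rho_{2}}\|_{\mathrm{F}}=\tfrac{1}{2}\cdot\tfrac{1}{2}\|B_{\mu_{1}}-B_{\mu_{2}}\|_{\mathrm{F}}.\]
Taking the supremum over all decompositions $\mu=\frac{1}{2}\mu_{1}+\frac{1}{2}\mu_{2}$ gives $s(\rho)\geq\frac{1}{2}s(\mu)$.

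No step here is a genuine obstacle. The only care needed is to check that $\rho_{1}$ and $\rho_{2}$ are probability measures, which holds because each is a convex combination of probability measures, and that $B$ passes linearly through mixtures. If $\mu$ lacks second moments, both sides are interpreted in the usual way, and the argument applies verbatim whenever the quantities involved are finite.
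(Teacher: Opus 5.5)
Your proposal is correct and is exactly the paper's argument: take $\rho_i=\frac12\mu_i+\frac12\nu$ and use linearity of $\mu\mapsto B_\mu$. Your constant bookkeeping is in fact the right one. The paper's displayed inequality drops the factor $\frac12$ from the definition of $s$; as written it would give $s(\rho)\geq s(\mu)$, which fails for $\mu=\frac12\delta_0+\frac12\delta_1$, $\nu=\delta_0$. Your computation yields precisely the stated, and there tight, bound $\frac12 s(\mu)$.
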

\begin{proof}
Fix probability measures $\mu_{1},\mu_{2}$ on $\mathbb{R}^{d}$ such that $\mu=\frac{1}{2}\mu_{1}+\frac{1}{2}\mu_{2}$. Then
\[\frac{1}{2}\mu+\frac{1}{2}\nu=\frac{1}{2}\left(\frac{1}{2}\mu_{1}+\frac{1}{2}\nu\right)+\frac{1}{2}\left(\frac{1}{2}\mu_{2}+\frac{1}{2}\nu\right),\]
so
\[s\left(\frac{1}{2}\mu+\frac{1}{2}\nu\right)\geq\left\|B_{\frac{1}{2}\mu_{1}+\frac{1}{2}\nu}-B_{\frac{1}{2}\mu_{2}+\frac{1}{2}\nu}\right\|_{\mathrm{F}}=\left\|\frac{1}{2}(B_{\mu_{1}}-B_{\mu_{2}})\right\|_{\mathrm{F}}.\]
The result follows by taking supremum over all $\mu_{1},\mu_{2}$ such that $\mu=\frac{1}{2}\mu_{1}+\frac{1}{2}\mu_{2}$.
\end{proof}

\begin{proof}[Proof of the bound for $s(\mu^{(d)})$ in Example \ref{introproduct}]
Fix the dimension $d\in\mathbb{N}$, and for simplicity, let $\mu=\mu^{(d)}$. It is easy to see that $B_{\mu}=I$. By Lemma \ref{tmuiid}, for every $A\in\mathbb{R}_{\mathrm{sym}}^{d\times d}$,
\[(T_{\mu}-B_{\mu}\otimes B_{\mu})(A)=(T_{\mu}-I\otimes I)(A)=T_{\mu}(A)-\mathrm{Tr}(A)I=2A+(\mathbb{E}X_{1}^{4}-3)\mathrm{diag}(A),\]
and so
\[\|(T_{\mu}-B_{\mu}\otimes B_{\mu})(A)\|_{\mathrm{F}}\leq2\|A\|_{\mathrm{F}}+(\mathbb{E}X_{1}^{4}-3)\|A\|_{\mathrm{F}}=(\mathbb{E}X_{1}^{4}+1)\|A\|_{\mathrm{F}}.\]
Hence, $\|T_{\mu}-B_{\mu}\otimes B_{\mu}\|_{\mathrm{op}}\leq\mathbb{E}X_{1}^{4}+1$, so by Proposition \ref{usefulsmubound}, it follows that $s(\mu)\leq(\mathbb{E}X_{1}^{4}+1)^{1/2}$.
\end{proof}

\begin{proof}[Proof of the bounds for $s(\mu)$ in Example \ref{introsinglenormal}]
We first prove the upper bound for $s(\mu)$. Since $B_{\mu}=\Sigma$, by Lemma \ref{tmugaussian}, for every $A\in\mathbb{R}_{\mathrm{sym}}^{d\times d}$,
\[(T_{\mu}-B_{\mu}\otimes B_{\mu})(A)=(T_{\mu}-\Sigma\otimes\Sigma)(A)=2\Sigma A\Sigma,\]
and so
\[\|(T_{\mu}-B_{\mu}\otimes B_{\mu})(A)\|_{\mathrm{F}}\leq 2\|\Sigma\|_{\mathrm{op}}^{2}\|A\|_{\mathrm{F}}.\]
Hence, $\|T_{\mu}-B_{\mu}\otimes B_{\mu}\|_{\mathrm{op}}\leq 2\|\Sigma\|_{\mathrm{op}}^{2}$, so by Proposition \ref{usefulsmubound}, it follows that $s(\mu)\leq\sqrt{2}\|\Sigma\|_{\mathrm{op}}$. This proves the upper bound for $s(\mu)$.

To prove the lower bound for $s(\mu)$, since the quantity $s(\mu)$ is orthogonally invariant in $\mu$, by a change of basis, we may assume that $\Sigma$ is a diagonal matrix with descending entries $\lambda_{1}\geq\ldots\geq\lambda_{d}$. Then $\mu=\mathcal{N}(0,\Sigma)$ coincides with the product measure $\mathcal{N}(0,\lambda_{1})\times\ldots\times\mathcal{N}(0,\lambda_{d})$ of one-dimensional normal distributions. Hence by Lemma \ref{smuproduct} and Lemma \ref{smu1dsinglenormal}, we have $s(\mu)\geq s(\mathcal{N}(0,\lambda_{1}))\geq 0.8\lambda_{1}=0.8\|\Sigma\|_{\mathrm{op}}$. This proves the lower bound for $s(\mu)$.
\end{proof}

\begin{proof}[Proof of the bounds for $s(\mu)$ in Example \ref{introgeneralmixture}]
We first prove the upper bound for $s(\mu)$. Let $\mu_{1}=\mathcal{N}(0,\Sigma_{1})$ and $\mu_{2}=\mathcal{N}(0,\Sigma_{2})$. Since $\mu=\frac{1}{2}\mu_{1}+\frac{1}{2}\mu_{2}$, we have $T_{\mu}=\frac{1}{2}T_{\mu_{1}}+\frac{1}{2}T_{\mu_{2}}$. So by Lemma \ref{tmugaussian}, for every $A\in\mathbb{R}_{\mathrm{sym}}^{d\times d}$,
\[T_{\mu}(A)=\frac{1}{2}T_{\mu_{1}}(A)+\frac{1}{2}T_{\mu_{2}}(A)=
\Sigma_{1}A\Sigma_{1}+\Sigma_{2}A\Sigma_{2}+\frac{1}{2}\langle A,\Sigma_{1}\rangle\Sigma_{1}+\frac{1}{2}\langle A,\Sigma_{2}\rangle\Sigma_{2}.\]
Since $B_{\mu}=\frac{1}{2}B_{\mu_{1}}+\frac{1}{2}B_{\mu_{2}}=\frac{1}{2}\Sigma_{1}+\frac{1}{2}\Sigma_{2}$, it follows that for every $A\in\mathbb{R}_{\mathrm{sym}}^{d\times d}$,
\begin{align*}
&(T_{\mu}-B_{\mu}\otimes B_{\mu})(A)\\=&
T_{\mu}(A)-\langle A,B_{\mu}\rangle B_{\mu}\\=&
\Sigma_{1}A\Sigma_{1}+\Sigma_{2}A\Sigma_{2}+\frac{1}{2}\langle A,\Sigma_{1}\rangle\Sigma_{1}+\frac{1}{2}\langle A,\Sigma_{2}\rangle\Sigma_{2}-\left\langle A,\frac{\Sigma_{1}+\Sigma_{2}}{2}\right\rangle\frac{\Sigma_{1}+\Sigma_{2}}{2}\\=&
\Sigma_{1}A\Sigma_{1}+\Sigma_{2}A\Sigma_{2}+\left\langle A,\frac{\Sigma_{1}-\Sigma_{2}}{2}\right\rangle\frac{\Sigma_{1}-\Sigma_{2}}{2},
\end{align*}
and thus,
\[\|(T_{\mu}-B_{\mu}\otimes B_{\mu})(A)\|_{\mathrm{F}}\leq\|\Sigma_{1}\|_{\mathrm{op}}^{2}\|A\|_{\mathrm{F}}+\|\Sigma_{2}\|_{\mathrm{op}}^{2}\|A\|_{\mathrm{F}}+\|A\|_{\mathrm{F}}\left\|\frac{\Sigma_{1}-\Sigma_{2}}{2}\right\|_{\mathrm{F}}^{2}.\]
Hence,
\[\|T_{\mu}-B_{\mu}\otimes B_{\mu}\|_{\mathrm{op}}\leq\|\Sigma_{1}\|_{\mathrm{op}}^{2}+\|\Sigma_{2}\|_{\mathrm{op}}^{2}+\left\|\frac{\Sigma_{1}-\Sigma_{2}}{2}\right\|_{\mathrm{F}}^{2},\]
and so by Proposition \ref{usefulsmubound},
\[s(\mu)\leq\|\Sigma_{1}\|_{\mathrm{op}}+\|\Sigma_{2}\|_{\mathrm{op}}+\frac{1}{2}\|\Sigma_{1}-\Sigma_{2}\|_{\mathrm{F}}.\]
This proves the upper bound for $s(\mu)$. To prove the lower bound for $s(\mu)$, by Lemma \ref{smumixture}, we have $s(\mu)\geq\frac{1}{2}s(\mathcal{N}(0,\Sigma_{1}))$. But by Example \ref{introsinglenormal}, $s(\mathcal{N}(0,\Sigma_{1}))\geq0.8\|\Sigma_{1}\|_{\mathrm{op}}$. Therefore, $s(\mu)\geq 0.4\|\Sigma_{1}\|_{\mathrm{op}}$. Interchanging the roles of $\Sigma_{1}$ and $\Sigma_{2}$, we also have $s(\mu)\geq 0.4\|\Sigma_{2}\|_{\mathrm{op}}$. Finally, since $\mu=\frac{1}{2}\mathcal{N}(0,\Sigma_{1})+\frac{1}{2}\mathcal{N}(0,\Sigma_{2})$, by the definition of $s(\mu)$, we have $s(\mu)\geq\frac{1}{2}\|\Sigma_{1}-\Sigma_{2}\|_{\mathrm{F}}$. We conclude that
\[s(\mu)\geq\max\left(0.4\|\Sigma_{1}\|_{\mathrm{op}},\,0.4\|\Sigma_{2}\|_{\mathrm{op}},\,\frac{1}{2}\|\Sigma_{1}-\Sigma_{2}\|_{\mathrm{F}}\right),\]
as desired.
\end{proof}

\section{Proof of the examples II}\label{examplesection}
In this section, we prove Example \ref{examplelambdastandardnormal}-Example \ref{examplelambdamixture}.
\begin{proof}[Proof of Example \ref{examplelambdastandardnormal}]
By Lemma \ref{tmuiid},
\[T_{\mu}(A)=2A+\mathrm{Tr}(A)I,\quad A\in\mathbb{R}_{\mathrm{sym}}^{d\times d}.\]
Let $\mathcal{H}_{1}$ be the span of $I$. Let $\mathcal{H}_{2}=\{A\in\mathbb{R}_{\mathrm{sym}}^{d\times d}|\,\langle A,I\rangle=0\}$. Then $\mathbb{R}_{\mathrm{sym}}^{d\times d}$ can be decomposed as the direct sum of the two mutually orthogonal subspaces $\mathcal{H}_{1}$ and $\mathcal{H}_{2}$.

We have $T_{\mu}(A)=(d+2)A$ for all $A\in\mathcal{H}_{1}$, whereas $T_{\mu}(A)=2A$ for all $A\in\mathcal{H}_{2}$. Therefore, the eigenvalues of $T_{\mu}$ are $d+2$ and $2$. The multiplicity of the eigenvalue $d+2$ is equal to $\mathrm{dim}\,\mathcal{H}_{1}=1$, whereas the multiplicity of the eigenvalue $2$ is equal to $\mathrm{dim}\,\mathcal{H}_{2}=\frac{d(d+1)}{2}-1$.
\end{proof}

\begin{proof}[Proof of Example \ref{examplelambdaiid}]
By Lemma \ref{tmuiid},
\[T_{\mu}(A)=2A+\mathrm{Tr}(A)I+(\mathbb{E}X_{1}^{4}-3)\mathrm{diag}(A),\quad A\in\mathbb{R}_{\mathrm{sym}}^{d\times d}.\]
Let $\mathcal{H}_{1}$ be the span of $I$. Let $\mathcal{H}_{2}$ be the set of all $d\times d$ diagonal matrices $A$ such that $\langle A,I\rangle=0$. Let $\mathcal{H}_{3}$ be the set of all $A\in\mathbb{R}_{\mathrm{sym}}^{d\times d}$ such that all the diagonal entries of $A$ are $0$. Then $\mathbb{R}_{\mathrm{sym}}^{d\times d}$ can be decomposed as the direct sum of the three mutually orthogonal subspaces $\mathcal{H}_{1},\mathcal{H}_{2},\mathcal{H}_{3}$. We have
\[T_{\mu}(A)=(d+\mathbb{E}X_{1}^{4}-1)A,\quad A\in\mathcal{H}_{1},\]
\[T_{\mu}(A)=(\mathbb{E}X_{1}^{4}-1)A,\quad A\in\mathcal{H}_{2},\]
\[T_{\mu}(A)=2A,\quad A\in\mathcal{H}_{3}.\]
Moreover, $\mathrm{dim}\,\mathcal{H}_{1}=1$, $\mathrm{dim}\,\mathcal{H}_{2}=d-1$, $\mathrm{dim}\,\mathcal{H}_{3}=\frac{d(d-1)}{2}$. Therefore, the eigenvalues of $T_{\mu}$ are
\[\underbrace{d+\mathbb{E}X_{1}^{4}-1}_{1},\;\underbrace{\mathbb{E}X_{1}^{4}-1,\ldots,\mathbb{E}X_{1}^{4}-1}_{d-1},\;\underbrace{2,\ldots,2}_{\frac{d(d-1)}{2}},\]
as announced.
\end{proof}

\begin{proof}[Proof of Example \ref{examplelambdamixture}]
For each $i\in\{1,\ldots,r\}$, let $\mu_{i}=\mathcal{N}(0,P_{i})$. Then $\mu=\frac{1}{r}\mu_{1}+\ldots+\frac{1}{r}\mu_{r}$ and so $T_{\mu}=\frac{1}{r}T_{\mu_{1}}+\ldots+\frac{1}{r}T_{\mu_{r}}$. By Lemma \ref{tmugaussian},
\[T_{\mu_{i}}(A)=2P_{i}AP_{i}+\langle A,P_{i}\rangle P_{i},\]
for all $A\in\mathbb{R}_{\mathrm{sym}}^{d\times d}$ and $i\in\{1,\ldots,r\}$. So
\begin{equation}\label{examplelambdamixtureproofeq1}
T_{\mu}(A)=\frac{1}{r}\sum_{i=1}^{r}(2P_{i}AP_{i}+\langle A,P_{i}\rangle P_{i}),
\end{equation}
for all $A\in\mathbb{R}_{\mathrm{sym}}^{d\times d}$. Let $\mathcal{H}_{1}$ be the span of $P_{1},\ldots,P_{r}$,
\[\mathcal{H}_{2}=\{A\in\mathbb{R}_{\mathrm{sym}}^{d\times d}|\; A=\sum_{i=1}^{r}P_{i}AP_{i}\quad\text{and}\quad\langle A,P_{i}\rangle=0\;\forall i\},\]
and
\[\mathcal{H}_{3}=\{A\in\mathbb{R}_{\mathrm{sym}}^{d\times d}|\,P_{i}AP_{i}=0\quad\forall i\}.\]
Then $\mathbb{R}_{\mathrm{sym}}^{d\times d}$ can be decomposed as the direct sum of the three mutually orthogonal subspaces $\mathcal{H}_{1},\mathcal{H}_{2},\mathcal{H}_{3}$. Indeed, it is clear that these 3 spaces are mutually orthogonal. To see that these 3 spaces span the entire $\mathbb{R}_{\mathrm{sym}}^{d\times d}$, observe that the span of $\mathcal{H}_{1}$ and $\mathcal{H}_{2}$ is equal to
\begin{equation}\label{examplelambdamixtureproofeq2}
\{A\in\mathbb{R}_{\mathrm{sym}}^{d\times d}|\,A=\sum_{i=1}^{r}P_{i}AP_{i}\}.
\end{equation}
This is because for every $A\in\mathbb{R}_{\mathrm{sym}}^{d\times d}$ and every $j\in\{1,\ldots,r\}$, we can write $P_{j}AP_{j}=\alpha_{j}P_{j}+(P_{j}AP_{j}-\alpha_{j}P_{j})$, where $\alpha_{j}\in\mathbb{R}$ is so that $\langle P_{j}AP_{j}-\alpha_{j}P_{j},P_{j}\rangle=0$. Then $\alpha_{j}P_{j}\in\mathcal{H}_{1}$ and $P_{j}AP_{j}-\alpha_{j}P_{j}\in\mathcal{H}_{2}$, and so $P_{j}AP_{j}$ is in the span of $\mathcal{H}_{1}$ and $\mathcal{H}_{2}$. Thus, if $A\in\mathbb{R}_{\mathrm{sym}}^{d\times d}$ satisfies $A=\sum_{j=1}^{r}P_{j}AP_{j}$, then $A$ is in the span of $\mathcal{H}_{1}$ and $\mathcal{H}_{2}$. This proves that the span of $\mathcal{H}_{1}$ and $\mathcal{H}_{2}$ is equal to the space in (\ref{examplelambdamixtureproofeq2}).

For every $A\in\mathbb{R}_{\mathrm{sym}}^{d\times d}$, we can write
\[A=\sum_{j=1}^{r}P_{j}AP_{j}+\left[A-\sum_{j=1}^{r}P_{j}AP_{j}\right].\]
Note that $\sum_{j=1}^{r}P_{j}AP_{j}$ is in the span of $\mathcal{H}_{1}$ and $\mathcal{H}_{2}$ by (\ref{examplelambdamixtureproofeq2}), whereas $A-\sum_{j=1}^{r}P_{j}AP_{j}$ is in $\mathcal{H}_{3}$. Therefore, $A$ is in the span of $\mathcal{H}_{1},\mathcal{H}_{2},\mathcal{H}_{3}$, and so $\mathcal{H}_{1},\mathcal{H}_{2},\mathcal{H}_{3}$ span the entire $\mathbb{R}_{\mathrm{sym}}^{d\times d}$.

By (\ref{examplelambdamixtureproofeq1}), since $\mathrm{Tr}(P_{j})=\frac{d}{r}$ for each $j\in\{1,\ldots,r\}$, we have
\[T_{\mu}(A)=\frac{1}{r}\left(2+\frac{d}{r}\right)A=\frac{d+2r}{r^{2}}A,\quad A\in\mathcal{H}_{1},\]
\[T_{\mu}(A)=\frac{2}{r}A,\quad A\in\mathcal{H}_{2},\]
\[T_{\mu}(A)=0,\quad A\in\mathcal{H}_{3}.\]
So the eigenvalues of $T_{\mu}$ are $\frac{d+2r}{r^{2}}$, $\frac{2}{r}$ and $0$. The multiplicity of the eigenvalue $\frac{d+2r}{r^{2}}$ is equal to $\mathrm{dim}\,\mathcal{H}_{1}=r$. The multiplicity of the eigenvalue $\frac{2}{r}$ is equal to $\mathrm{dim}\,\mathcal{H}_{2}$. To find $\mathrm{dim}\,\mathcal{H}_{2}$, note that since the span of $\mathcal{H}_{1}$ and $\mathcal{H}_{2}$ is equal to the space in (\ref{examplelambdamixtureproofeq2}), which has dimension $r\cdot\mathrm{dim}(\mathbb{R}_{\mathrm{sym}}^{\frac{d}{r}\times\frac{d}{r}})=r\cdot\frac{\frac{d}{r}(\frac{d}{r}+1)}{2}=\frac{d(d+r)}{2r}$, the dimension of $\mathcal{H}_{2}$ is $\frac{d(d+r)}{2r}-r$.
\end{proof}

\section{Unequal weight mixtures}\label{unequalsection}
\begin{proof}[Proof of Lemma \ref{unequal}]
Let $\Lambda$ be the set of all signed measures of the form $\mu_{1}-\mu_{2}$ for some $\mu_{1},\mu_{2}\in\mathcal{P}_{2}(\mathbb{R}^{d})$. Define $\Phi:\Lambda\to\mathbb{R}$ by
\[\Phi(\zeta)=\left\|\int_{\mathbb{R}^{d}}xx^{T}\,d\zeta(x)\right\|_{\mathrm{F}},\quad\zeta\in\Lambda.\]
We need to show that
\begin{align}\label{unequalproofeq1}
\frac{1}{2(1-\alpha)}\sup_{\mu=\frac{1}{2}\mu_{1}+\frac{1}{2}\mu_{2}}\Phi(\mu_{1}-\mu_{2})\leq&\sup_{\mu=\alpha\nu_{1}+(1-\alpha)\nu_{2}}\Phi(\nu_{1}-\nu_{2})\\\leq&
\frac{1}{2\alpha}\sup_{\mu=\frac{1}{2}\mu_{1}+\frac{1}{2}\mu_{2}}\Phi(\mu_{1}-\mu_{2}),\nonumber
\end{align}
where the middle is supremum is over all probability measures $\nu_{1},\nu_{2}$ on $\mathbb{R}^{d}$ such that $\mu=\alpha\nu_{1}+(1-\alpha)\nu_{2}$, and the other two suprema are over all probability measures $\mu_{1},\mu_{2}$ on $\mathbb{R}^{d}$ such that $\mu=\frac{1}{2}\mu_{1}+\frac{1}{2}\mu_{2}$.

Suppose that $\mu_{1},\mu_{2}$ satisfies $\mu=\frac{1}{2}\mu_{1}+\frac{1}{2}\mu_{2}$. Then we can write $\mu$ as the following weighted mixture
\[\mu=\alpha\mu_{1}+(1-\alpha)\left(\frac{\frac{1}{2}-\alpha}{1-\alpha}\mu_{1}+\frac{1}{2(1-\alpha)}\mu_{2}\right),\]
and
\[\mu_{1}-\left(\frac{\frac{1}{2}-\alpha}{1-\alpha}\mu_{1}+\frac{1}{2(1-\alpha)}\mu_{2}\right)=\frac{1}{2(1-\alpha)}(\mu_{1}-\mu_{2}).\]
Hence,
\[\sup_{\mu=\alpha\nu_{1}+(1-\alpha)\nu_{2}}\Phi(\nu_{1}-\nu_{2})\geq\Phi\left(\frac{1}{2(1-\alpha)}(\mu_{1}-\mu_{2})\right)=\frac{1}{2(1-\alpha)}\Phi(\mu_{1}-\mu_{2}).\]
This proves the first inequality in (\ref{unequalproofeq1}). To prove the second inequality in (\ref{unequalproofeq1}), suppose that $\nu_{1},\nu_{2}$ satisfies $\mu=\alpha\nu_{1}+(1-\alpha)\nu_{2}$. Then we can write $\mu$ as the following equal weight mixture 
\[\mu=\frac{1}{2}\left(2\alpha\nu_{1}+(1-2\alpha)\nu_{2}\right)+\frac{1}{2}\nu_{2},\]
and
\[\left(2\alpha\nu_{1}+(1-2\alpha)\nu_{2}\right)-\nu_{2}=2\alpha(\nu_{1}-\nu_{2}).\]
Hence,
\[\sup_{\mu=\frac{1}{2}\mu_{1}+\frac{1}{2}\mu_{2}}\Phi(\mu_{1}-\mu_{2})\geq\Phi\left(2\alpha(\nu_{1}-\nu_{2})\right)=2\alpha\Phi(\nu_{1}-\nu_{2}).\]
This proves the second inequality in (\ref{unequalproofeq1}).
\end{proof}

\section{Future directions}\label{discuss}
\subsection{More than two components}
Theorem \ref{thm:spectral_decomposition} and Corollary \ref{asymptoticresolution} show how $\lambda_{1}(T_{\mu})$ and $\lambda_{2}(T_{\mu})$ can be used to test the mixture model hypothesis for a given probability measure $\mu\in\mathcal{P}_{8}(\mathbb{R}^{d})$. In particular, if $\lambda_{2}(T_{\mu})\sim\lambda_{1}(T_{\mu})$, i.e., the two quantities are of the same order, then $s(\mu)\sim\|B_{\mu}\|_{\mathrm{F}}$, i.e., $\mu$ satisfies the mixture model hypothesis (where we assume $\mu$ satisfies the $L^{8}$-$L^{2}$ equivalence). As mentioned in Remark \ref{spectralgraphresemble}, this resembles the classical fact in spectral graph theory that a regular graph is connected if and only if the first largest eigenvalue of its adjacency matrix is strictly larger than the second largest eigenvalue. It is also known that a regular graph has at least $r\in\mathbb{N}$ disconnected components if and only if the first $r$ largest eigenvalues of its adjacency matrix coincide. 
Should this graph theoretic fact also have an analog here?
It would suggest that if $\lambda_{3}(T_{\mu})\sim\lambda_{1}(T_{\mu})$ then the measure $\mu$ has at least 3 components. 

While the mixture model hypothesis is a rigorous way to define what it means for a given measure $\mu$ to have at least 2 components, it is not clear what a good definition of having ``at least 3 components" should be in the non-parametric setting.  Indeed, if $\mu$ can be decomposed as a mixture $\mu=\frac{1}{2}\mu_{1}+\frac{1}{2}\mu_{2}$ of two probability measures $\mu_{1},\mu_{2}\in\mathcal{P}_{2}(\mathbb{R}^{d})$ with $\|B_{\mu_{1}}-B_{\mu_{2}}\|_{\mathrm{F}}>\epsilon\|B_{\mu}\|_{\mathrm{F}}$, where $\epsilon>0$ is a fixed threshold, then $\mu$ can also be decomposed as the following mixture of three probability measures:
\[\mu=\frac{1}{3}\mu_{1}+\frac{1}{3}\mu_{2}+\frac{1}{3}\mu.\]
Moreover, $\|B_{\mu_{1}}-B_{\mu}\|_{\mathrm{F}}=\|B_{\mu_{1}}-(\frac{1}{2}B_{\mu_{1}}+\frac{1}{2}B_{\mu_{2}})\|_{\mathrm{F}}=\frac{1}{2}\|B_{\mu_{1}}-B_{\mu_{2}}\|_{\mathrm{F}}>\frac{\epsilon}{2}\|B_{\mu}\|_{\mathrm{F}}$, and similarly we also have $\|B_{\mu_{2}}-B_{\mu}\|_{\mathrm{F}}>\frac{\epsilon}{2}\|B_{\mu}\|_{\mathrm{F}}$. In short, if it is only required for the components to have second order statistics matrices that are significantly different from each other, then the measure $\mu$ having at least 2 components automatically implies having at least 3 components. The authors believe that if $\lambda_{3}(T_{\mu})\sim\lambda_{1}(T_{\mu})$, then $\mu$ admits a mixture decomposition $\mu=\frac{1}{3}\mu_{1}+\frac{1}{3}\mu_{2}+\frac{1}{3}\mu_{3}$ in which $B_{\mu_{1}},B_{\mu_{2}},B_{\mu_{3}}$ satisfy a stronger ``separation property" (e.g., maybe linear independence in a quantitative sense), rather than just being significantly different from each other in Frobenius norm.
\subsection{Tensorization}
The Cheeger constant of a given probability measure $\mu$ on $\mathbb{R}^{d}$ quantifies its ``metric disconnectedness."  More precisely, it captures the extent to which the population represented by $\mu$ can be partitioned into two subpopulations that are mostly separated in the metric sense. On the other hand, the quantity $s(\mu)$ quantifies the ``statistical disconnectedness" of $\mu$, or more precisely, the extent to which the population represented by $\mu$ can be partitioned into two subpopulations that are statistically very different. It is known from \cite{BobkovHoudre} that the Cheeger constant (sometimes also called the isoperimetric constant) behaves well under tensorization. Thus, one may ask: does the same hold true for $s(\mu)$?  The answer is no, but for a trivial reason.   Namely, if $d=1$ and $\mu$ is uniformly distributed on the two points $-1,1$, then $s(\mu)=0$, while $s(\mu^{\otimes 2})\neq 0$. This is because $(-1)^{2}=1^{2}=1$. But Example \ref{introproduct} suggests that one can still control the quantity $s(\mu)$ when $\mu$ gets tensorized.  We propose the following.
\begin{problem}\label{tensorizeproblem}
For a given $\mu\in\mathcal{P}_{2}(\mathbb{R}^{d})$, find a sharp estimate for $\displaystyle\sup_{n\in\mathbb{N}}s(\mu^{\otimes n})$ up to a universal constant factor, where $\mu^{\otimes n}$ is the product measure $\underbrace{\mu\times\ldots\times\mu}_{n}$ on $\mathbb{R}^{dn}$.
\end{problem}

\medskip
\medskip

\noindent{\textbf{Acknowledgments:} The authors are grateful to Christian Houdr\'e for some useful discussions and for asking about Problem \ref{tensorizeproblem}.  J.K. acknowledges partial support from NSF DMS 2309782, DE SC0025312, and the Sloan Foundation.

\appendix\section{The $L^{p}$-$L^{2}$ equivalence assumption}\label{lpl2section}
The main results of this paper Theorem \ref{firstmain} and Theorem \ref{thm:spectral_decomposition} have the assumption of $L^{p}$-$L^{2}$ equivalence for some $p\geq 2$:
\begin{equation}\label{lpl2intro}
\left(\int_{\mathbb{R}^d}|\langle x,v\rangle|^{p}\,d\mu(x)\right)^{1/p}\leq\beta\left(\int_{\mathbb{R}^d}|\langle x,v\rangle|^{2}\,d\mu(x)\right)^{{1/2}},\quad v\in\mathbb{R}^{d},
\end{equation}
where $\beta\geq 1$ is a fixed parameter.
This is, in fact, a commonly used assumption in high dimensional probability (see, e.g., \cite{AZ, MP, SV, Tikhomirov}). In this appendix, we review some basic properties of this assumption and review some examples.

The condition (\ref{lpl2intro}) is invariant under pushforward of $\mu$ by any linear transformation on $\mathbb{R}^{d}$. Moreover, if $\mu_{1},\ldots,\mu_{r}$ are probability measures satisfying (\ref{lpl2intro}) and $\alpha_{1},\ldots,\alpha_{r}>0$ with $\sum_{i=1}^{r}\alpha_{i}=1$, then the mixture $\sum_{i=1}^{r}\alpha_{i}\mu_{i}$ also satisfies (\ref{lpl2intro}), except with $\beta$ being replaced by $\displaystyle\beta\cdot\max_{1\leq i\leq r}\alpha_{i}^{(1/p)-(1/2)}$, since
\begin{align}\label{mixturelpl2}
&\left(\int_{\mathbb{R}^d}|\langle x,v\rangle|^{p}\,d\left(\sum_{i=1}^{r}\alpha_{i}\mu_{i}\right)(x)\right)^{2/p}\\\leq&
\sum_{i=1}^{r}\left(\alpha_{i}\int_{\mathbb{R}^d}|\langle x,v\rangle|^{p}\,d\mu_{i}(x)\right)^{2/p}\nonumber\\\leq&
\sum_{i=1}^{r}\alpha_{i}^{2/p}\cdot\beta^{2}\int_{\mathbb{R}^d}|\langle x,v\rangle|^{2}\,d\mu_{i}(x)\nonumber\\\leq&
\left(\beta^{2}\max_{1\leq i\leq r}\alpha_{i}^{(2/p)-1}\right)\sum_{i=1}^{r}\alpha_{i}\int_{\mathbb{R}^d}|\langle x,v\rangle|^{2}\,d\mu_{i}(x).\nonumber
\end{align}
for all $v\in\mathbb{R}^{d}$.

The condition (\ref{lpl2intro}) covers a wide range of probability measures $\mu$ on $\mathbb{R}^{d}$ for which we can control the $\beta$ so that it is independent of the dimension $d$. For $v\in\mathbb{R}^{d}$ and $p\geq 1$, define $\|v\|_{p}:=(\sum_{i=1}^{d}|\langle v,e_{i}\rangle|^{p})^{1/p}$.
\begin{example}\label{examplelpl2gaussian}
If $\mu=\mathcal{N}(w,B)$ where $w\in\mathbb{R}^{d}$ and $B$ is a $d\times d$ positive semidefinite matrix, then $\mu$ satisfies (\ref{lpl2intro}) with $\beta=C\sqrt{p}$ for some absolute constant $C$. Indeed, if $X$ is a random vector in $\mathbb{R}^{d}$ with distribution $\mu$, then for every $v\in\mathbb{R}^{d}$, the random variable $\langle X,v\rangle$ has normal distribution with mean $\langle w,v\rangle$ and standard deviation $\|B^{1/2}v\|_{2}$. Hence, $(\int_{\mathbb{R}^{d}}|\langle x,v\rangle|^{2}\,d\mu(x))^{1/2}=(\mathbb{E}|\langle X,v\rangle|^{2})^{1/2}=(|\langle w,v\rangle|^{2}+\|B^{1/2}v\|_{2}^{2})^{1/2}$, and $(\int_{\mathbb{R}^{d}}|\langle x,v\rangle|^{p}\,d\mu(x))^{1/p}=(\mathbb{E}|\langle X,v\rangle|^{p})^{1/p}=(\mathbb{E}|\langle w,v\rangle+\|B^{1/2}v\|_{2}g_{0}|^{p})^{1/p}\leq|\langle w,v\rangle|+\|B^{1/2}v\|_{2}(\mathbb{E}|g_{0}|^{p})^{1/p}$ by Minkowski's inequality, where $g_{0}$ is a standard normal random variable in $\mathbb{R}$. Since $(\mathbb{E}|g_{0}|^{p})^{1/p}\leq C_{0}\sqrt{p}$ for some absolute constant $C_{0}$, we conclude that $\mu$ satisfies (\ref{lpl2intro}) with $\beta=C_{0}\sqrt{p}+1$.
\end{example}

\begin{example}\label{examplelpl2gaussianmixture}
If $\mu=\sum_{i=1}^{r}\alpha_{i}\mathcal{N}(w_{i},B_{i})$ is a mixture of normal distributions, where $w_{i}\in\mathbb{R}^{d}$, $B_{i}$ is a $d\times d$ positive semidefinite matrix, $\alpha_{i}>0$, for all $1\leq i\leq r$, and $\sum_{i=1}^{r}\alpha_{i}=1$, then $\mu$ satisfies (\ref{lpl2intro}) with $\displaystyle\beta=C\sqrt{p}\cdot\max_{1\leq i\leq r}\alpha_{i}^{(1/p)-(1/2)}$ for some absolute constant $C$. This follows from Example \ref{examplelpl2gaussian} and (\ref{mixturelpl2}).
\end{example}

\begin{example}
Suppose that $X_{1},\ldots,X_{d}$ are independent, identically distributed random variables with $\mathbb{E}X_{1}=0$, $\mathbb{E}X_{1}^{2}=1$ and $\mathbb{E}|X_{1}|^{p}<\infty$. Let $\mu$ be the distribution of the random vector $X=(X_{1},\ldots,X_{d})^{T}$ in $\mathbb{R}^{d}$. Then for every $v\in\mathbb{R}^{d}$, we have $(\int_{\mathbb{R}^{d}}|\langle x,v\rangle|^{p}\,d\mu(x))^{1/p}=(\mathbb{E}|\sum_{i=1}^{d}\langle v,e_{i}\rangle X_{i}|^{2})^{1/2}=\|v\|_{2}$
and by Rosenthal's inequality \cite[Theorem 3]{Rosenthal}, we have
\begin{eqnarray*}
\left(\int_{\mathbb{R}^{d}}|\langle x,v\rangle|^{p}\,d\mu(x)\right)^{1/p}&=&\left(\mathbb{E}\left|\sum_{i=1}^{d}\langle v,e_{i}\rangle X_{i}\right|^{p}\right)^{1/p}\\&\leq&
K_{p}\max\{(\mathbb{E}|X_{1}|^{p})^{1/p}|\|v\|_{p},\|v\|_{2}\}\leq K_{p}(\mathbb{E}|X_{1}|^{p})^{1/p}\|v\|_{2},  
\end{eqnarray*}
where $K_{p}>0$ is a constant that depends only on $p$. Therefore, $\mu$ satisfies (\ref{lpl2intro}) with $\beta=K_{p}(\mathbb{E}|X_{1}|^{p})^{1/p}$.
\end{example}

\begin{example}
If $X$ is an isotropic subgaussian random vector in $\mathbb{R}^{d}$ and $\mu$ is the distribution of $X$, then $\mu$ satisfies (\ref{lpl2intro}) with $\beta=C\|X\|_{\psi_{2}}\sqrt{p}$ for some absolute constant $C$, where $\|X\|_{\psi_{2}}$ is the subgaussian norm of $X$. See Proposition 2.6.1, Definition 2.6.4, Definition 3.2.5, Definition 3.4.1 in \cite{Romanbook}.
\end{example}

\begin{example}\label{examplelpl2mixtureconvex}
If $\mu$ is the uniform distribution on a convex body $K$ in $\mathbb{R}^{d}$, then $\mu$ satisfies (\ref{lpl2intro}) with $\beta=Cp$ for some absolute constant $C$. See Subsection 3.4.3 and Proposition 2.8.1 in \cite{Romanbook} for the case when $K$ is isotropic. However, since (\ref{lpl2intro}) is invariant under pushforward of $\mu$ by any linear transformation on $\mathbb{R}^{d}$, it holds even when $K$ is not isotropic.
\end{example}

On the other hand, below is an example of $\mu$ where the the smallest $\beta$ satisfying (\ref{lpl2intro}) grows as the dimension $d$ gets large. 
\begin{example}
Suppose that $\mu$ is the uniform distribution on the $2d$ points $\pm e_{1},\ldots,\pm e_{d}$. Then for all $v\in\mathbb{R}^{d}$, we have $(\int_{\mathbb{R}^{d}}|\langle x,v\rangle|^{4}\,d\mu(x))^{1/4}=(\frac{1}{d}\sum_{i=1}^{d}|\langle v,e_{i}\rangle|^{4})^{1/4}=d^{-1/4}\|v\|_{4}$ and $(\int_{\mathbb{R}^{d}}|\langle x,v\rangle|^{2})^{1/2}=(\frac{1}{d}\sum_{i=1}^{d}|\langle v,e_{i}\rangle|^{2})^{1/2}=d^{-1/2}\|v\|_{2}$. Hence, when $p=4$, the smallest $\beta$ satisfying (\ref{lpl2intro}) is $\beta=d^{1/4}$.
\end{example}
\end{document}